\newcommand{\A}{\mathcal A}
\newcommand{\BB}{\mathcal B}
\newcommand{\DD}{\mathcal D}
\newcommand{\C}{\mathcal C}
\newcommand{\K}{\mathcal K}
\newcommand{\II}{\mathbb I}
\newcommand{\U}{\mathcal U}
\newcommand{\V}{\mathcal V}
\newcommand{\W}{\mathcal W}
\newcommand{\Z}{\mathcal Z}
\newcommand{\F}{\mathcal F}
\newcommand{\St}{{\mathcal S}t}
\newcommand{\w}{\omega}
\newcommand{\ulim}{\textstyle{\bigcup^\infty}}
\newcommand{\id}{\mathrm{id}}
\newcommand{\pr}{\mathrm{pr}}
\newcommand{\IN}{\mathbb N}
\newcommand{\e}{\varepsilon}
\newcommand{\dist}{\mathrm{dist}}
\newcommand{\diam}{\mathrm{diam}}
\newtheorem{theorem}{Theorem}[section]
\newtheorem{proposition}[theorem]{Proposition}
\newtheorem{problem}[theorem]{Problem}
\newtheorem{corollary}[theorem]{Corollary}
\newtheorem{lemma}[theorem]{Lemma}
\newtheorem{claim}[theorem]{Claim}
\theoremstyle{definition}
\newtheorem{definition}[theorem]{Definition}
\title{Universal nowhere dense and meager sets in Menger manifolds}
\author{Taras Banakh and Du\v san Repov\v s}
\subjclass{??}
\keywords{Menger cube, Menger manifold, universal nowhere dense set, universal meager set, tame open set, tame $G_\delta$-set}
\address{T.Banakh: Jan Kochanowski University in Kielce (Poland) and Ivan Franko National University of Lviv (Ukraine)}
\email{t.o.banakh@gmail.com}
\address{D.Repov\v s: Faculty of  Education, and
Faculty of Mathematics and Physics, University of Ljubljana (Slovenia)}
\email{dusan.repovs@guest.arnes.si}
\subjclass[2010]{57N20, 57N45, 54F65}
\thanks{This research was supported by the Slovenian Research Agency grants P1-0292-0101 and J1-4144-0101. The first author has been partially financed by NCN means granted by decision DEC-2011/01/B/ST1/01439.}
\begin{document}
\begin{abstract} In each Menger manifold $M$ we construct:
\begin{itemize}
\item a closed nowhere dense subset $M_0$ which is homeomorphic to $M$ and is universal nowhere dense in the sense that for each nowhere dense set $A\subset M$ there is a homeomorphism $h$ of $M$ such that $h(A)\subset M_0$;
\item a meager $F_\sigma$-set $\Sigma_0\subset M$ which is universal meager in the sense that for each meager subset $B\subset M$ there is a homeomorphism $h$ of $M$ such that $h(B)\subset \Sigma_0$.
\end{itemize}
Also we prove that any two universal meager $F_\sigma$-sets in $M$ are ambiently homeomorphic.
\end{abstract}

\maketitle

\section{Introduction and survey of principal results}

In this paper we shall construct universal nowhere dense and universal meager sets in Menger manifolds, i.e., manifolds modeled on Menger cubes $\mu^n$, $n\ge 0$. The notions of a universal nowhere dense or universal meager set are special cases of the notion of a $\K$-universal set for a family $\K$ of subsets of a topological space $X$.
A subset $A\subset X$ is called {\em $\K$-universal} if $A\in\K$ and for each $B\in\K$ there exists a homeomorphism $h:X\to X$ such that $h(B)\subset A$.

\begin{problem} Which families $\K$ of subsets of a given topological space $X$ do possess a $\K$-universal set $A\in\K$?
\end{problem}

For certain families $\K$ this problem is closely connected with the problem of the existence of $\K$-absorptive sets, whose definition we are going to recall now.

Let $X$ be a topological space and $\K$ is a family of subsets of $X$. We shall assume that $\K$ is {\em topologically invariant} in the sense that $\K=\{h(K):K\in\K\}$ for any homeomorphism $h:X\to X$ of $X$. By $\sigma\K$ we denote the family of subsets of $X$ which can be written as countable unions $A=\bigcup_{n\in\w}A_n$ of subsets $A_n\in\K$, $n\in\w$. For two maps $f,g:X\to Y$ between topological spaces and an open cover $\U$ of $Y$ we shall write $(f,g)\prec\U$ and say that the maps $f,g$ are {\em $\U$-near} if for each $x\in X$ the set $\{f(x),g(x)\}$ is contained in some set $U\in\U$.

Following \cite{West}, we define a subset $B\subset X$ to be {\em $\K$-absorptive} in $X$ if $B\in\sigma\K$ and for each set $K\subset\K$, open set $V\subset X$, and open cover $\U$ of $V$, there is a homeomorphism $h:V\to V$ such that $h(K\cap V)\subset B\cap V$ and $(h,\id)\prec\U$. An important observation is that each set $A\in\sigma\K$ containing a $\K$-absorptive subset of $X$ is also $\K$-absorptive.

The following powerful uniqueness theorem was proved by West \cite{West} and Geoghegan and Summerhill  \cite[2.5]{GS74}.

\begin{theorem}[Uniqueness Theorem for $\K$-absorptive sets]\label{t1} Let $\K$ be a topologically invariant family of closed subsets of a Polish space $X$. Then any two $\K$-absorptive sets $B,B'\subset X$ are ambiently homeomorphic. More precisely, for any open set $V\subset X$ and any open cover $\V$ of $V$ there is a homeomorphism $h:V\to V$ such that $h(V\cap B)=V\cap B'$ and $h$ is $\V$-near to the identity map of $V$.
\end{theorem}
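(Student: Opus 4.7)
The plan is a classical back-and-forth construction. Using $B,B'\in\sigma\K$, write $B\cap V=\bigcup_{k\in\w}B_k$ and $B'\cap V=\bigcup_{k\in\w}B'_k$ as countable unions of members of $\K$, which we may assume to be increasing. Fix a complete metric $d$ on $V$ (which is Polish as an open subspace of the Polish space $X$), and choose a sequence $\V=\U_0,\U_1,\U_2,\dots$ of open covers of $V$ such that each $\U_{n+1}$ star-refines $\U_n$ with mesh, relative to $d$, decreasing rapidly enough to force uniform convergence of compositions of homeomorphisms close to the identity.

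The back-and-forth proceeds inductively: construct homeomorphisms $g_n\colon V\to V$ with $(g_n,\id)\prec\U_n$, and set $h_n=g_n\circ\cdots\circ g_1$. At odd steps $n=2k-1$, apply the $\K$-absorptiveness of $B'$ with cover $\U_n$ to the closed-in-$V$ set $h_{n-1}(B_k\cap V)$, which by the topological invariance of $\K$ is the trace on $V$ of a member of $\K$; this yields $g_n$ with $g_n(h_{n-1}(B_k\cap V))\subset B'\cap V$ and hence $h_n(B_k\cap V)\subset B'\cap V$. At even steps $n=2k$, apply $\K$-absorptiveness of $B$ symmetrically, arranging $B'_k\cap V\subset h_n(B\cap V)$ by pushing the set $h_{n-1}^{-1}(B'_k\cap V)$ into $B$ and inverting the resulting homeomorphism.

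By standard arguments based on star-refinement and completeness of $d$, the sequences $(h_n)$ and $(h_n^{-1})$ converge uniformly, so $h=\lim h_n$ is a homeomorphism of $V$ that is $\V$-near the identity. The main technical obstacle is ensuring that the inclusions achieved at step $n$ persist through all subsequent modifications: since $B$ and $B'$ need not be closed, a later $g_m$ might move a point already placed in $B'$ back outside it. The standard remedy is to require each $g_n$ to be the identity on a neighborhood of the previously positioned closed set, namely $h_{n-1}(B_{k-1}\cap V)$ at odd steps and $h_{n-1}^{-1}(B'_{k-1}\cap V)$ at even steps. This calls for a relative variant of $\K$-absorptiveness, obtainable by applying the absorption property in the open complement $V\setminus F$ of the closed set $F$ to be fixed and then extending the resulting homeomorphism by the identity on $F$; continuity of this extension is secured by choosing $\U_n$ fine enough near $F$. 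With this relative control in place, every point of $B\cap V$ is eventually pushed into $B'\cap V$ and stays there, and symmetrically, yielding the desired equality $h(V\cap B)=V\cap B'$.
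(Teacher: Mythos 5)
First, a point of reference: the paper does not prove Theorem~\ref{t1} at all --- it is quoted from West \cite{West} and Geoghegan--Summerhill \cite[2.5]{GS74}. Your back-and-forth scheme is indeed the approach taken in those sources, and for the absolute statement ($V=X$) its skeleton is sound. But as written the argument has genuine gaps.

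The most serious one is the claim that $h_{n-1}(B_k\cap V)$ ``by the topological invariance of $\K$ is the trace on $V$ of a member of $\K$''. Topological invariance is assumed only for homeomorphisms of the whole space $X$, whereas $h_{n-1}$ is merely a homeomorphism of the open subset $V$ and in general does not extend to $X$; hence $h_{n-1}(B_k\cap V)$ need not be of the form $K\cap V$ with $K\in\K$, and the absorption property of $B'$ (which applies only to such traces) cannot be invoked. For $V=X$ this evaporates, since then $h_{n-1}(B_k)\in\K$ outright; but the ``more precisely'' clause for a proper open $V$ is exactly where the step fails, and repairing it needs either a stronger invariance hypothesis on $\K$ or a reorganization of the induction. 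A second gap is in the preservation bookkeeping: you require $g_n$ to fix the previously positioned set on one side only. At an odd step $g_n$ must also fix pointwise the already-absorbed portion $B'_{k-1}\cap V$ of the target, and at an even step the conjugated homeomorphism must fix the already-positioned portion $B_k\cap V$ of the source; otherwise an odd step can destroy the inclusions $h_m^{-1}(B'_j\cap V)\subset B\cap V$ achieved at earlier even steps, and vice versa. (Both sets to be fixed lie in $B'$, respectively in $B$, so your ``work in the complement and extend by the identity'' device still applies, but to the union of the two sets.) Finally, ``we may assume the $B_k$ increasing'' tacitly uses that $\K$ is closed under finite unions, which is not among the hypotheses; one should instead keep track of $B_1\cup\dots\cup B_{k-1}$ as the set to be fixed at step $k$, which costs nothing but should be said.
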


Two subsets $A,B$ of a topological space $X$ are called {\em ambiently homeomorphic} if there is a homeomorphism $h:X\to X$ such that $h(A)=B$. This happens if and only if the pairs $(X,A)$ and $(X,B)$ are homeomorphic. We shall say that two pairs $(X,A)$ and $(Y,B)$ of topological spaces $A\subset X$ and $B\subset Y$ are {\em homeomorphic} if there is a homeomorphism $h:X\to Y$ such that $h(A)=B$. In this case we say that $h:(X,A)\to (Y,B)$ is a homeomorphism of pairs.

As shown in Corollary 1 of \cite{BR2}, Theorem~\ref{t1} implies the following characterization.

\begin{theorem}\label{c1} Let $\K$ be a topologically invariant family of closed subsets of a Polish space. If a $\K$-absorptive set $B$ in $X$ exists, then a subset $A\subset X$ is $\sigma\K$-universal in $X$ if and only if $A$ is $\K$-absorptive.
\end{theorem}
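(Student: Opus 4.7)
The plan is to prove the two implications of the equivalence separately, in each case reducing the work to one of the two tools already recorded in the excerpt: the observation that any $\sigma\K$-set containing a $\K$-absorptive subset is itself $\K$-absorptive, and the Uniqueness Theorem~\ref{t1}. The hypothesis on the existence of a $\K$-absorptive set $B$ enters only in the forward direction, to transport absorptivity into $A$.

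For the forward implication, suppose $A$ is $\sigma\K$-universal. Then $A\in\sigma\K$ by definition, and since the $\K$-absorptive set $B$ also lies in $\sigma\K$, the $\sigma\K$-universality of $A$ furnishes a homeomorphism $h\colon X\to X$ with $h(B)\subset A$. The topological invariance of $\K$ (together with the fact that openness and open covers are preserved under self-homeomorphisms of $X$) makes $\K$-absorptivity a topological property: any data $K\in\K$, open $V\subset X$, open cover $\U$ of $V$ pull back via $h^{-1}$ to corresponding data for $B$, and the homeomorphism of $h^{-1}(V)$ promised by absorptivity of $B$ can be conjugated by $h$ to a homeomorphism of $V$ with the required properties for $h(B)$. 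Hence $h(B)$ is $\K$-absorptive, and the observation, applied to the $\sigma\K$-set $A$ containing this $\K$-absorptive subset, yields that $A$ is $\K$-absorptive.

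For the converse, suppose $A$ is $\K$-absorptive and let $C\in\sigma\K$ be arbitrary. Set $A':=A\cup C$; clearly $A'\in\sigma\K$, and $A'$ contains the $\K$-absorptive set $A$, so the same observation yields that $A'$ is $\K$-absorptive. Theorem~\ref{t1}, applied to the two $\K$-absorptive sets $A$ and $A'$ in the Polish space $X$ with $V=X$ and $\V=\{X\}$, produces a homeomorphism $h\colon X\to X$ with $h(A')=A$. Then $h(C)\subset h(A\cup C)=h(A')=A$, showing that $A$ is $\sigma\K$-universal.

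The only genuinely technical point is the transfer of $\K$-absorptivity along a self-homeomorphism of $X$ in the forward direction, but this is just a routine unpacking of the definition using topological invariance of $\K$. The remainder of the proof consists of two short invocations of the preparatory tools, so I do not anticipate any serious obstacle.
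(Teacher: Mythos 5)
Your proof is correct, and it follows exactly the route the paper indicates: the paper gives no proof of Theorem~\ref{c1} beyond citing Corollary~1 of \cite{BR2} and noting that it is a consequence of Theorem~\ref{t1}, and your argument — transporting the absorptivity of $B$ into $A$ via topological invariance and the superset observation for one direction, and applying the Uniqueness Theorem to $A$ and $A\cup C$ for the other — is precisely that derivation.
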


Theorem~\ref{c1} reduces the problem of constructing $\sigma\K$-universal sets in a Polish space $X$ to the problem of constructing a $\K$-absorptive sets in $X$. The latter problem was extensively studied for various families $\K$ consisting of $Z$-sets in $X$, see \cite[Ch.Iv,V]{BP}, \cite{BRZ}, \cite[\S2.2.2]{Chig}.

Let us recall that a subset $A$ of a topological space $X$ is a {\em $Z$-set} in $X$ if $A$ is closed in $X$ and for each open cover $\U$ of $X$ there is a map $f:X\to X\setminus A$ such that $(f,\id)\prec\U$. The family $\Z$ of all $Z$-sets of $X$ is topologically invariant and is contained in the larger families $\Z_k$, $0\le k\le \w$, consisting of $Z_k$-sets in $X$. A subset $A$ of a topological space $X$ is called a {\em $Z_k$-set} in $X$ for $0\le k\le\w$ if $A$ is closed in $X$ and for each open cover $\U$ of $X$ and each  map $f:\II^k\to X$ there is a map  $f':\II^k\to X\setminus A$ such that $(f',f)\prec\U$. It is clear that $\Z_k\subset\Z_n$ for any numbers $0\le n\le k\le \w$, which implies that the families $\Z,\Z_k$, $\w\ge k\ge 0$, form an increasing chain
$$\Z\subset\Z_\w\subset\cdots\subset\Z_1\subset\Z_0.$$
For certain nice spaces, for example, ANR's the families $\Z$ and $\Z_\w$ coincide (see \cite[2.2.4]{Chig}).
On the other hand, for any topological space $X$ the family $\Z_0$ of $Z_0$-sets coincides with the family of closed nowhere dense subsets of $X$. Consequently, the family $\sigma\Z_0$ coincides with the family of all meager $F_\sigma$-subsets of $X$.

Using the technique of skeletoids, $\Z$-absorptive sets (which are automatically $\sigma\Z$-universal) were constructed in many ``nice'' spaces $X$, in particular, in manifolds modeled on the Hilbert or Menger cubes, see \cite[2.2.2]{Chig}. By a {\em manifold modeled on a topological space $E$} (briefly, an {\em $E$-manifold}) we understand a paracompact topological space having a cover by open subsets homeomorphic to open subsets of the model space $E$. The standard technique of skeletoids cannot be applied to constructing $\K$-universal or $\sigma\K$-universal sets for families  $\K\not\subset\Z$. In \cite{BR1} and \cite{BR2} the authors using the technique of tame open set and tame $G_\delta$-set, constructed $\Z_0$-universal and $\sigma\Z_0$-universal sets in manifolds modeled on finite-dimensional and infinite-dimensional cubes $\II^n$, $1\le n\le\w$.

In this paper we shall apply the same technique to construct $\Z_0$-universal sets and $\sigma\Z_0$-universal sets in Menger manifolds, i.e., manifolds modeled on Menger cubes $\mu^n$, $0\le n<\w$. There are many (topologically equivalent) constructions of Menger cubes \cite[\S4.1.1]{Chig}. Due to the celebrated Bestvina's characterization \cite{Best}, spaces homeomorphic to Menger cubes can be topologically characterized as follows.

\begin{theorem}[Bestvina]\label{best1} A compact metrizable space $X$ is homeomorphic to the $n$-dimensional Menger cube $\mu^n$ if any only if
\begin{enumerate}
\item $\dim(X)=n$;
\item $X$ is an absolute extensor in dimension $n$;
\item $X$ has disjoint $n$-cells property.
\end{enumerate}
\end{theorem}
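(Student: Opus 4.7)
The forward direction, that $\mu^n$ itself satisfies (1)--(3), is classical and routine: using the standard inverse-limit construction of $\mu^n$ as a nested intersection of polyhedral neighborhoods in $\II^{2n+1}$ obtained by repeated subdivision and removal of cube interiors, one verifies directly that $\dim(\mu^n)=n$, that the canonical retractions yield the absolute extensor property in dimension $n$, and that the disjoint $n$-cells property follows from general position at each finite stage of the construction.

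The substantive direction is uniqueness: any compact metrizable $X$ satisfying (1)--(3) is homeomorphic to $\mu^n$. The plan is to prove this by a back-and-forth construction of a homeomorphism between any two such spaces $X$ and $Y$. The technical heart is a $Z$-approximation theorem: under (1)--(3), every map $f\colon P\to X$ from an at-most-$n$-dimensional compactum $P$ can be approximated, with arbitrary precision, by a closed embedding whose image is a $Z$-set. To prove this, triangulate $P$ finely and construct the approximation inductively over the skeleta of the triangulation, invoking (2) to extend partial maps across cells and (3) to push the images of newly added $n$-cells off the images of the previously embedded $n$-cells into general position.

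With the $Z$-approximation theorem in hand, the back-and-forth proceeds as follows. Choose open covers $\U_k$ of $X$ and $\V_k$ of $Y$ of mesh tending to $0$, all with $n$-dimensional nerves (possible by (1)); for each $k$, build compatible $Z$-embeddings of the nerves into $Y$ and $X$ together with canonical mapping-cylinder retractions in the opposite direction (supplied by (2) together with the approximation theorem), and compose to obtain a near-homeomorphism $X\to Y$ that is $\V_k$-close to the near-homeomorphism produced at stage $k-1$. The uniform limit is the desired homeomorphism. The main obstacle throughout is the $Z$-approximation theorem itself: in the Menger setting one cannot appeal to ambient general position as in a manifold, so the disjointness required to turn maps into embeddings must be extracted entirely from condition (3), combined with controlled extensions supplied by (2). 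Formalizing this extraction --- done originally via the technology of regular $UV^n$-maps --- constitutes the bulk of Bestvina's original proof in \cite{Best}.
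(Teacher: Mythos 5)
This statement is not proved in the paper at all: it is quoted as Bestvina's characterization theorem with a citation to \cite{Best}, and the present paper uses it strictly as a black box (together with its local version, Theorem~\ref{best2}). So there is no in-paper argument to compare yours against; the only fair comparison is with Bestvina's memoir itself, and on that score your text is a reasonable précis of the known strategy rather than a proof.

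As a proof it has two genuine gaps. First, the ``technical heart'' you identify --- that under (1)--(3) every map of an at most $n$-dimensional compactum into $X$ is approximable by a $Z$-embedding --- is only asserted; your one-sentence sketch (induct over skeleta, use (2) to extend and (3) to separate $n$-cells) does not explain how the disjoint $n$-cells property, which separates \emph{two} images at a time, yields injectivity of a single map, nor why the resulting embedding is a $Z$-set; this is exactly where Bestvina's machinery of partitions and $UV^{n-1}$-resolutions does real work, and you concede as much. Second, the closing step of the back-and-forth --- ``the uniform limit is the desired homeomorphism'' --- is false as stated: a uniform limit of near-homeomorphisms need not be injective or surjective. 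One must either choose the successive covers $\V_k$ adaptively so that the standard inverse-limit/Bing-shrinking criterion applies (as in Toru\'nczyk's and Bestvina's arguments), or phrase the whole construction as showing that a suitable $UV^{n-1}$-surjection between the two spaces is a near homeomorphism (compare Proposition~\ref{near}). Without one of these devices the limit argument does not close. In short: the outline points at the right theorem of \cite{Best}, but every step that makes the theorem true is deferred rather than carried out.
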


We say that a topological space $X$ has {\em disjoint $n$-cells property} if for any maps $f,g:\II^n\to X$ from the $n$-dimensional cube $\II^n=[0,1]^n$ to $X$ and any open cover $\U$ of $X$ there are maps $f',g':\II^n\to X$ such that $f'(\II^n)\cap g'(\II^n)=\emptyset$ and the maps $f',g'$ are $\U$-near to the maps $f,g$, respectively.

A topological space $X$ is called an {\em absolute neighborhood extensor in dimension $n$} (briefly, an {\em $\ANE[n]$-space}) if each continuous map $f:B\to X$ defined on a closed subset $B$ of a metrizable space $A$ of dimension $\dim(A)\le n$ has a continuous extension $\bar f:O(B)\to X$ defined on a neighborhood $O(B)$ of the set $B$ in $A$. If $f$ can be always extended to a continuous map $\bar f:A\to X$, then we say that $X$ is an {\em absolute extensor in dimension $n$} (briefly, an {\em $\AEE[n]$-space}).

The ``local'' version of Theorem~\ref{best1} yields a characterization of Menger manifolds, see \cite{Best} or \cite[4.1.9]{Chig}.

 \begin{theorem}[Bestvina]\label{best2} A locally compact metrizable space $X$ is a Menger manifold if any only if
\begin{enumerate}
\item $X$ has finite dimension $n=\dim(X)$;
\item $X$ is an absolute neighborhood extensor for $n$-dimensional spaces;
\item $X$ has disjoint $n$-cells property.
\end{enumerate}
\end{theorem}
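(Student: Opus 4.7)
My plan is to reduce Theorem~\ref{best2} to Bestvina's compact characterization (Theorem~\ref{best1}) by standard local-to-global and global-to-local passages, exploiting that Menger manifolds are by definition locally modeled on $\mu^n$.

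\emph{Necessity.} Assume $X$ is a Menger manifold. Each point has an open neighborhood $U$ homeomorphic to an open subset of $\mu^n$. Covering dimension is local on locally compact metrizable spaces, so $\dim X=\dim\mu^n=n$, giving condition~(1). Open subsets of the $\mathrm{AE}[n]$-space $\mu^n$ are $\mathrm{ANE}[n]$, and a Hanner-type patching lemma in dimension $n$ (any paracompact space covered by open $\mathrm{ANE}[n]$-sets is itself $\mathrm{ANE}[n]$) then yields condition~(2). For condition~(3), given $f,g:\II^n\to X$ and an open cover $\U$, I would subdivide $\II^n$ finely enough that each sub-cube maps under both $f$ and $g$ into a single chart, apply the disjoint $n$-cells property inside $\mu^n$ on a chart-by-chart basis to obtain $\U$-near approximations with disjoint images on each sub-cube, and finally remove the finitely many intersections introduced between distinct sub-cubes by a straightforward inductive perturbation.

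\emph{Sufficiency.} Assume (1)--(3). It suffices to show that every $x\in X$ has an open neighborhood homeomorphic to an open subset of $\mu^n$. By local compactness, fix a compact neighborhood $C$ of $x$. I would construct a compactum $K$ with $x\in\mathrm{int}(K)\subset K\subset C$ satisfying the three hypotheses of Theorem~\ref{best1}; then $K\cong\mu^n$ and $\mathrm{int}(K)$ is the desired chart. Condition~(1) on $K$ is automatic, since $K$ contains an open subset of the $n$-dimensional space $X$ (forcing $\dim K\ge n$) while $K\subset X$ forces $\dim K\le n$. Condition~(3) on $K$ descends from condition~(3) on $X$ via a neighborhood retraction $r:V\to K$ supplied by the $\mathrm{ANE}[n]$ property of $X$: approximations of a pair of maps $\II^n\to K$ inside $X$ with disjoint images can be pushed back into $K$ by $r$ while remaining close to the originals.

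The main obstacle is condition~(2): the hypotheses give $K$ only $\mathrm{ANE}[n]$, whereas Theorem~\ref{best1} demands that $K$ be $\mathrm{AE}[n]$. The remedy is to arrange that $K$ is contractible inside a small $\mathrm{ANE}[n]$-neighborhood $V\subset X$; then any map from an $n$-dimensional metric space $A$ into $K$ extends over a neighborhood of its domain (by $\mathrm{ANE}[n]$ of $V$), and the contraction, together with the homotopy extension property on $V$, promotes the local extension to a global one landing back in $K$ after composition with $r$. Producing such a contractible $K$ with $x$ in its interior is the delicate step, typically carried out by thickening a small $n$-dimensional polyhedral approximation of a neighborhood of $x$ via $r$ and then choosing a further closed neighborhood inside this thickening; this is the standard localization argument behind Bestvina's theorem, worked out in Chigogidze \cite[4.1.9]{Chig}.
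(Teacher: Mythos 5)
The paper does not prove this statement at all: Theorem~\ref{best2} is quoted as a known theorem of Bestvina, with the proof delegated to \cite{Best} and \cite[4.1.9]{Chig}, so there is no internal argument to compare yours against. Judged on its own terms, your outline follows the expected localization strategy, but two of its steps have genuine gaps. First, in the necessity direction, your chart-by-chart treatment of the disjoint $n$-cells property does not produce continuous maps: perturbing $f$ and $g$ independently on each sub-cube destroys agreement along common faces, and even granting continuity, making $f'(\sigma)$ disjoint from $g'(\tau)$ for \emph{distinct} sub-cubes $\sigma,\tau$ is not a ``straightforward inductive perturbation'' --- two $n$-cells in an $n$-dimensional space cannot in general be separated by small moves except by invoking a disjoint-cells-type property, which is what is being established. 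The standard repair is not subdivision but a Baire-category argument: one shows that, in the limitation topology on pairs of maps, the pairs whose images are disjoint over a fixed compact piece of a fixed chart form an open dense set, and intersects countably many such sets; density there is a genuinely local perturbation. Second, in the sufficiency direction, the claim that disjointness ``descends'' through the retraction $r:V\to K$ fails as stated: $r$ is not injective, so $f'(\II^n)\cap g'(\II^n)=\emptyset$ does not yield $r(f'(\II^n))\cap r(g'(\II^n))=\emptyset$. One needs a quantitative separation of the approximating images measured against the modulus by which $r$ displaces points, and the order of quantifiers does not let you arrange this after the fact; compare how the paper's own Proposition~\ref{p3.6} handles exactly this difficulty by first securing a definite distance $\delta$ between the two images and then forcing every subsequent modification, including the retraction, to move points by less than $\delta/5$.

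The remaining and hardest step --- manufacturing a compact neighborhood $K$ of $x$ that is not merely an $\mathrm{ANE}[n]$ but also $C^{n-1}$, so that Proposition~\ref{LCn} upgrades it to an absolute extensor in dimension $n$ --- you explicitly defer to Chigogidze. Since that is precisely the content of the local-to-global passage (the $LC^{n-1}$ data only give relative connectivity between nested neighborhoods, and extracting a single $(n-1)$-connected compact neighborhood requires an infinite iteration or mapping-cylinder construction), the proposal is best read as a roadmap to the cited proof rather than a self-contained argument. That is consistent with how the paper itself treats the statement, but it does not constitute a proof.
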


For infinite $n$, Theorems~\ref{best1} and \ref{best2} turn into Toru\'nczyk's characterization
theorems for the Hilbert cube and Hilbert cube manifolds, see \cite{Tor80}.

In this paper we address the following:

\begin{problem}\label{prob1.6} Is it true that for every $k,n\in\w$ the Menger cube $\mu^n$ contains a universal $\Z_k$-set and a universal $\sigma\Z_k$-set?
\end{problem}

Z-Set Unknotting Theorem \cite[4.1.16]{Chig} and Theorem 5.2.1 \cite{Chig} on the existence of skeletoids in the Menger cube imply an affirmative answer to this problem for all $k\ge n$.
In this paper we answer Problem~\ref{prob1.6} for $k=0$.
To construct $\Z_0$-universal and $\sigma\Z_0$-universal sets in Menger cubes (or more generally in Menger manifolds), we shall use the technique of tame open sets and tame $G_\delta$-sets developed in \cite{BR2}.

We start by defining tame open balls in Menger manifolds. Let $M$ be a $\mu^n$-manifold for some $n\in\w$.

An open subset $B\subset M$ is called a {\em tame open ball\/} in $M$ if
\begin{itemize}
\item the closure $\bar B$ of $B$ is homeomorphic to the Menger cube $\mu^n$;
\item the boundary $\partial B=\bar B\setminus B$ in $M$ is homeomorphic to the Menger cube $\mu^n$;
\item $\partial B$ is a $Z$-set in $\bar B$ and in $M\setminus B$.
\end{itemize}
By a {\em tame closed  ball} in $M$ we shall understand the closure of a tame open ball in $M$.

The following theorem follows from Lemma~\ref{l4.2}, which will be proved in Section~\ref{s4}.

\begin{theorem}\label{base} Each Menger manifold has a base of topology consisting of tame open balls.
\end{theorem}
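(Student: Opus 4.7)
The plan is to reduce to a local construction in the model $\mu^n$ and then invoke Bestvina's characterization (Theorem \ref{best1}) to verify the three defining properties of a tame open ball. Fix a point $x$ in a $\mu^n$-manifold $M$ and an open neighborhood $U$ of $x$. Using the manifold structure, choose an open neighborhood $V$ of $x$ with $\bar V \subset U$ and a homeomorphism $\phi : V \to V'$ onto an open subset $V' \subset \mu^n$. It suffices to produce, inside $V'$ around $\phi(x)$, an open set $B'$ satisfying the three conditions of a tame open ball, since those conditions transfer through the chart $\phi$.

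To construct $B'$, I would exploit the standard realization of $\mu^n$ as a decreasing intersection $\mu^n = \bigcap_k P_k$ of $n$-dimensional polyhedra, where each $P_k$ is assembled from small sub-cubes of a regular subdivision of the ambient cube $\II^{2n+1}$. For $k$ large enough, a sub-polyhedral neighborhood $P \subset P_k$ of $\phi(x)$ yields a candidate $B' = \mathrm{int}(P) \cap \mu^n$ of arbitrarily small diameter. One then needs to verify that $\bar{B'}$ and $\partial B'$ are each Menger $n$-cubes. By Theorem \ref{best1} this reduces to three checks: compactness of dimension $n$ is built into the construction; the $\mathrm{AE}[n]$ extension property is local in character and, with a good choice of $P$, inherited from $\mu^n$; and the disjoint $n$-cells property for $\bar{B'}$ and $\partial B'$ follows from that of $\mu^n$ together with a general position argument, since $n$-cells in these compacta can be pushed into the interior of $\bar{B'}$ or along $\partial B'$.

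With $\bar{B'}$ and $\partial B'$ identified as Menger $n$-cubes, the remaining condition that $\partial B'$ be a $Z$-set in $\bar{B'}$ and in $M \setminus B'$ follows from the self-similar structure of $\mu^n$: near every point of $\partial B'$ there are arbitrarily small copies of $\mu^n$ on either side of the boundary, so any map into $\bar{B'}$ or $M \setminus B'$ can be perturbed by an arbitrarily small amount to avoid $\partial B'$ by absorbing it into those interior copies. The main obstacle is the joint realization of all three properties with a single choice of $P$: one must place $\partial P$ in sufficiently general position relative to the grids defining $\mu^n$ so that $\partial P \cap \mu^n$ is itself a genuine Menger cube rather than a degenerate cross-section, while at the same time leaving enough room on both sides of $\partial B'$ to perform the $Z$-set pushes. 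This careful combinatorial choice is, I expect, the technical content of the cited Lemma \ref{l4.2}, from which Theorem \ref{base} is stated to follow.
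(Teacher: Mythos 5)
There is a genuine gap, and it sits exactly at the point you flag as ``the main obstacle.'' If you realize $\mu^n$ as the standard intersection $\bigcap_k P_k$ inside $\II^{2n+1}$ and take a polyhedral neighborhood $P$ of the point, then $\partial P$ is a $2n$-dimensional polyhedron, and $\partial B'=\partial P\cap\mu^n$ can \emph{never} be homeomorphic to $\mu^n$: the Menger cube $\mu^n$ contains every $n$-dimensional compactum, including ones (such as the $n$-skeleton of the $(2n+2)$-simplex) that do not embed in any $2n$-dimensional polyhedron. Equivalently, two maps of $\II^n$ into a subset of a $2n$-dimensional polyhedron cannot in general be pushed off each other, so the disjoint $n$-cells property fails for $\partial B'$; for $n=1$ this is the familiar fact that a planar slice of the Menger curve is (at best) Sierpi\'nski-carpet-like rather than a Menger curve. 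No ``general position'' placement of $\partial P$ relative to the defining grids can repair this --- the obstruction is dimensional, not combinatorial --- so the second bullet in the definition of a tame open ball is unattainable by your construction, and with it the $Z$-set condition as stated.

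The paper's Lemma~\ref{l4.2} avoids this by changing the ambient dimension: it works with $M^k_n$ for $k\ge 2n+2$ (still homeomorphic to $\mu^n$), where the coordinate-hyperplane slice $M^k_n\cap\pr_k^{-1}(a)$ at a suitable Cantor-set level $a$ equals $M^{k-1}_n\times\{a\}$ with $k-1\ge 2n+1$, hence is a genuine copy of $\mu^n$; Lemma~\ref{l4.1} then verifies the $Z$-set conditions using the retractions $\bar r_{[a,b]}$. A second point your sketch misses is that these slabs $M^k_n\cap\pr_k^{-1}([0,\e))$ shrink to a whole face, not to a point, so the paper must additionally collapse that face (Proposition~\ref{p3.9} shows the quotient is again $\mu^n$) to obtain a neighborhood base at a single point, and only then invoke topological homogeneity of $\mu^n$ and a chart to spread the base over the whole manifold. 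If you want to salvage your approach, you would have to replace ``polyhedral neighborhood in $\II^{2n+1}$'' by a construction whose boundary carries at least $2n+1$ ambient dimensions, which is in effect what the passage to $M^k_n$, $k\ge 2n+2$, accomplishes.
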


A family $\F$ of subsets of a topological space $X$ is called {\em vanishing} if for each open cover $\U$ of $X$ the family $\F'=\{F\in\F:\forall U\in\U,\;\;F\not\subset U\}$ is locally finite in $X$. It is easy to see that a countable family $\F=\{F_n\}_{n\in\w}$ of subsets of a compact metric space $(X,d)$ is vanishing if and only if $\lim_{n\to\infty}\mathrm{diam}(F_n)=0$.

A subset $U$ of a Menger manifold $M$ is called a {\em tame open set in $M$} if $U=\bigcup\U$ for some vanishing family $\U$ of tame open balls having pairwise disjoint closures in $M$. If $n=\dim(M)>0$, then the tame open balls $U\in\U$ can be uniquely recovered as connected components of the tame open set $U$.

A subset $G\subset X$ is called a {\em tame $G_\delta$-set in $X$} if $U=\bigcap_{n\in\w}\bigcup \U_n$ for some families $\U_n$, $n\in\w$, of tame open balls in $X$ such that
\begin{itemize}
\item for every $n\in\w$ any distinct tame open balls $U,V\in \U_n$ have disjoint closures in $X$;
\item for every $n\in\w$ and $U\in\U_{n+1}$ the closure $\bar U$ is contained in some set $V\in\U_n$ with $V\ne\bar U$;
\item the family $\bigcup_{n\in\w}\U_n$ is vanishing.
\end{itemize}

Tame open sets and tame $G_\delta$-sets  can be equivalently defined via tame families of tame open balls.
A family $\U$ of non-empty open subsets of a topological space $X$ is called {\em tame} if $\U$ is vanishing and for any distinct sets $U,V\in\U$ one of three possibilities hold: either $\bar U\cap\bar V=\emptyset$ or $\bar U\subset V$ or $\bar V\subset U$.
For a family $\U$ of subsets of a set $X$ by $$\ulim\U=\textstyle{\bigcap\big\{\bigcup}(\U\setminus\F):\F\mbox{ is a finite subfamily of $\U$}\big\}$$ we denote the set of all points $x\in X$ which belong to infinite number of sets $U\in\U$.

The following proposition can be proved by analogy with Proposition 2 of \cite{BR2}.

\begin{proposition}\label{p2} A subset $T$ of a Menger manifold $M$ is tame open \textup{(}resp. tame $G_\delta$\textup{)} if and only if $T=\bigcup\mathcal T$ \textup{(}resp. $T=\ulim\mathcal T$~\textup{)} for a suitable tame family $\mathcal T$ of tame open balls in $M$.
\end{proposition}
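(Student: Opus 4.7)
\emph{Proof plan.} The argument is purely order-theoretic and closely parallels Proposition~2 of \cite{BR2}. The key recurring observation is that in a vanishing family $\mathcal T$, for any fixed $U \in \mathcal T$ the set $\{V \in \mathcal T : \bar U \subset V\}$ is finite: all such $V$'s contain any chosen point of $U$, and the vanishing property forces local finiteness of the ``large'' members of $\mathcal T$ at that point.

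\emph{Tame open case.} The forward implication is trivial: a vanishing family of tame open balls with pairwise disjoint closures automatically satisfies the tameness trichotomy. For the converse, given a tame family $\mathcal T$ with $T = \bigcup \mathcal T$, I take $\mathcal U \subset \mathcal T$ to be the subfamily of elements maximal under the strict nesting relation $\bar U \subsetneq V$. The finiteness remark supplies for each $U \in \mathcal T$ a maximal $V_U \in \mathcal U$ with $U \subset V_U$, so $\bigcup \mathcal U = T$. Distinct members of $\mathcal U$ must have disjoint closures, since tameness would otherwise force one closure into the other, contradicting maximality. Vanishing of $\mathcal U$ is inherited from $\mathcal T$.

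\emph{Tame $G_\delta$ case, forward direction.} Put $\mathcal T := \bigcup_n \mathcal U_n$; it is vanishing by hypothesis. For tameness, take distinct $U \in \mathcal U_m$, $V \in \mathcal U_n$ with $m \le n$, and iterate the nesting clause $n-m$ times to produce $V' \in \mathcal U_m$ with $\bar V \subset V'$; then either $V' = U$, giving $\bar V \subset U$, or $V' \ne U$, in which case the disjoint-closure clause within $\mathcal U_m$ yields $\bar V \cap \bar U \subset \bar V' \cap \bar U = \emptyset$. To check $T = \ulim\mathcal T$: if $x \in T$, choose $U_n \in \mathcal U_n$ containing $x$ for each $n$; the ball of $\mathcal U_n$ containing $\bar U_{n+1}$ must coincide with $U_n$ (both meet at $x$, and $\mathcal U_n$ has disjoint closures), giving strict nesting $\bar U_{n+1} \subsetneq U_n$, so the $U_n$ are distinct and $x$ lies in infinitely many members of $\mathcal T$. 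Conversely, given $x \in \ulim\mathcal T$ and $n \in \w$, the disjoint-closure clause bounds the number of members of $\bigcup_{k<n}\mathcal U_k$ containing $x$ by $n$, so some $U \in \mathcal U_k$ with $k \ge n$ contains $x$; iterating the nesting clause yields a member of $\mathcal U_n$ containing $x$.

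\emph{Tame $G_\delta$ case, converse direction.} Given tame $\mathcal T$ with $T = \ulim\mathcal T$, define the depth $d(U) \in \w$ of $U \in \mathcal T$ to be the length of the longest strict chain $U = U_0 \subsetneq U_1 \subsetneq \cdots \subsetneq U_k$ in $\mathcal T$ under strict nesting of closures. The finiteness remark guarantees $d(U) < \infty$, and $d(V) = d(U) - 1$ for an immediate successor $V$, so depth drops by exactly one under proper nesting. Set $\mathcal U_n := d^{-1}(n)$. Two distinct $U, V \in \mathcal U_n$ cannot be nested without violating this depth behaviour, so their closures are disjoint by tameness. For $U \in \mathcal U_{n+1}$, any chain realizing $d(U) = n+1$ provides a successor $V \in \mathcal U_n$ with $\bar U \subsetneq V$. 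Vanishing of each $\mathcal U_n$ is inherited from $\mathcal T$. Finally, $T = \bigcap_n \bigcup \mathcal U_n$: the inclusion $\supseteq$ holds because distinct depths produce distinct balls, so $x \in \bigcup \mathcal U_n$ for every $n$ places $x$ in infinitely many members of $\mathcal T$; the inclusion $\subseteq$ holds because any sufficiently deep ball containing $x$ can be truncated along its ancestral chain to a depth-$n$ member still containing $x$. The main obstacle is precisely this bookkeeping for the depth function, namely verifying that it is always finite and strictly monotone under proper nesting; once these are in hand, every remaining verification reduces to a routine application of the tameness trichotomy.
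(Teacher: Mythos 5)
Your argument is correct and is essentially the intended one: the paper offers no proof of Proposition~\ref{p2}, deferring to the analogous Proposition~2 of \cite{BR2}, and your order-theoretic reduction --- the finiteness of $\{V\in\mathcal T:\bar U\subset V\}$ via the vanishing property, extraction of maximal elements for the tame open case, and the depth stratification $\U_n=d^{-1}(n)$ for the tame $G_\delta$ case --- is exactly that argument. The one loose phrase, ``depth drops by exactly one under proper nesting,'' should be read as ``by exactly one for immediate successors, hence strictly in general'' (which is how you actually use it), and both readings are justified because the set of members of $\mathcal T$ containing $\bar U$ is linearly ordered by the tameness trichotomy.
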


The classes of dense tame open sets and dense tame $G_\delta$-sets in Menger manifolds have the following cofinality property, which can be derived from Theorem~\ref{base} by analogy with the proof of Proposition~3 of \cite{BR2}.

\begin{proposition}\label{p3}
\begin{enumerate}
\item Each open subset of a Menger manifold contains a dense tame open set;
\item Each $G_\delta$-subset of a Menger manifold contains a dense tame $G_\delta$-set.
\end{enumerate}
\end{proposition}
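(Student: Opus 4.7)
The plan is to derive both statements from Theorem~\ref{base} by constructing maximal families of tame open balls, imitating the proof of Proposition~3 in \cite{BR2}. Fix a compatible proper metric $d$ on $M$; such a metric exists because a Menger manifold is locally compact, second countable, and metrizable, so that closed $d$-bounded sets are compact.

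For part (1), let $W\subset M$ be open. Enumerate a countable base $(V_k)_{k\in\w}$ of the topology of $W$ consisting of sets with compact closure in $W$. Inductively, at step $k$, after the tame open balls $B_0,\dots,B_{k-1}$ have been chosen, check whether $V_k\setminus\bigcup_{i<k}\bar B_i$ is non-empty; if so, Theorem~\ref{base} yields a tame open ball $B_k$ with $\bar B_k\subset V_k\setminus\bigcup_{i<k}\bar B_i$ and $\diam(B_k)<1/k$; otherwise set $B_k=\emptyset$. The family $\U=\{B_k:B_k\ne\emptyset\}$ then has pairwise disjoint closures by construction, its union is dense in $W$ because the $V_k$ form a base, and it is vanishing because $\diam(B_k)\to 0$ and the metric is proper, so any compact subset of $M$ meets only finitely many $B_k$ whose diameter exceeds a prescribed threshold.

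For part (2), write the $G_\delta$-set as $G=\bigcap_{n\in\w}W_n$ with open sets $M\supset W_0\supset W_1\supset\cdots$ Construct families $\U_n$ of tame open balls by induction on $n$: apply part~(1) to $W_0$ to obtain $\U_0$. Having constructed $\U_n$, for each $U\in\U_n$ apply part~(1) inside the open set $U\cap W_{n+1}$ to obtain a vanishing family $\U_{n+1}^U$ of tame open balls with pairwise disjoint closures, dense in $U\cap W_{n+1}$, each of diameter less than $1/(n+1)$; since $\partial U$ is a $Z$-set in $\bar U$, any such ball can be slightly shrunk so that its closure lies strictly inside $U$, giving $\bar B\subsetneq U$. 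Set $\U_{n+1}=\bigcup_{U\in\U_n}\U_{n+1}^U$. Then $\mathcal T=\bigcup_n\U_n$ is a tame family of tame open balls in $M$, and by Proposition~\ref{p2} the set $T=\ulim\mathcal T$ is a tame $G_\delta$-subset of $M$ contained in $G$. Density of $T$ in $G$ follows from the stagewise density built into the construction: any $g\in G$ lies in every $W_n$, and one inductively finds a nested sequence $\bar B_0\supsetneq\bar B_1\supsetneq\cdots$ with $B_n\in\U_n$ arbitrarily close to $g$, whose intersection is a point of $T\cap G$.

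The main technical obstacle is the simultaneous management, in~(2), of three competing requirements: density of $\U_{n+1}$ inside each $W_{n+1}$, strict containment $\bar B\subsetneq U$ across successive stages, and the global vanishing of $\bigcup_n\U_n$ in $M$. The first is delivered by part~(1); the second is made possible by the $Z$-set property of $\partial U$ in $\bar U$, which is part of the definition of a tame open ball and lets us push any candidate ball off $\partial U$; and the third is enforced by the uniform diameter bound $1/(n+1)$ for the members of $\U_{n+1}$, together with properness of $d$.
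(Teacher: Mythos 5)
Your overall strategy -- a greedy construction of vanishing families of tame open balls from the base supplied by Theorem~\ref{base}, iterated to get the nested families defining a tame $G_\delta$-set -- is exactly the route the paper intends (the paper gives no proof, deferring to the analogous Proposition~3 of \cite{BR2}). Part (1) is correct as written: if $V_k\subset\bigcup_{i<k}\bar B_i$ then $V_k\subset\overline{\bigcup_{i<k}B_i}$, so density holds in either branch of the dichotomy, and the diameter bound $1/k$ together with a local Lebesgue-number argument gives the vanishing property. (The remark about shrinking off $\partial U$ via the $Z$-set property in (2) is unnecessary: your part (1) already produces balls with $\bar B_k\subset V_k\subset U\cap W_{n+1}$.)

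The gap is in the density verification for part (2). Your claim that ``one inductively finds a nested sequence $\bar B_0\supsetneq\bar B_1\supsetneq\cdots$ with $B_n\in\U_n$ arbitrarily close to $g$'' conflates two incompatible demands: nestedness forces every later ball into $B_0$, so you must \emph{start} the chain at a high level $N$ with $\bar B_N\subset B(g,\e)$ and then continue it downward forever. But a ball $B\in\U_m$ acquires descendants only if $B\cap W_{m+1}\ne\emptyset$; if $B\cap W_{m+1}=\emptyset$ the family $\U_{m+1}^B$ is empty, the chain dies, and $\bar B$ contains no point of $T$. Nothing in your construction guarantees that the balls of $\U_N$ lying near a given $g\in G$ are among those whose chains survive, so for a non-dense $G_\delta$-set $G$ the set $T$ need not be dense in $G$ (for $G=\{g\}$ your $T$ can even be empty, since $g$ need not belong to any member of any $\U_n$). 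When $G$ is dense in $M$ -- which is the only case the paper actually uses -- the gap closes easily: every $W_n$ is then dense, so every ball has descendants, and moreover each $\bigcup\U_n$ is dense open in $M$, whence $T=\bigcap_n\bigcup\U_n$ is dense by the Baire category theorem. You should either restrict the statement to dense inputs and invoke this, or add to the construction a mechanism that steers balls of every level toward each point of $G$ (which the disjoint-closures requirement makes nontrivial).
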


The $Z$-Set Unknotting Theorem for Menger manifolds \cite[4.1.15]{Chig} implies that any two tame open balls in a connected Menger manifold are ambiently homeomorphic. A similar uniqueness theorem holds also for dense tame open sets and dense tame $G_\delta$-sets in Menger manifolds.

\begin{theorem}[Uniqueness Theorem for Dense Tame Open Sets]\label{tameopen} Any two dense tame open sets $U,U'\subset M$ of a Menger manifold $M$ are ambiently homeomorphic.
Moreover, for each tame open set $U\subset M$ its complement $M\setminus U$ is homeomorphic to the Menger manifold $M$.
\end{theorem}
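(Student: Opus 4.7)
The plan is to prove the second assertion first and then deploy it as infrastructure for a back-and-forth construction establishing the uniqueness statement. So suppose $U=\bigcup\mathcal U$ is a tame open set in $M$, where $\mathcal U$ is a vanishing family of tame open balls with pairwise disjoint closures, and set $F=M\setminus U$. I would verify that $F$ satisfies Bestvina's characterization of Menger manifolds (Theorem~\ref{best2}). Since $F$ is closed in the locally compact metrizable space $M$, it is locally compact metrizable. For the dimension, $F\subseteq M$ gives $\dim F\le n$, while each $\partial B\cong\mu^n$ (for $B\in\mathcal U$) is contained in $F$, yielding $\dim F\ge n$.

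The substantive conditions are the disjoint $n$-cells property and the $\mathrm{ANE}[n]$ property of $F$. For the disjoint $n$-cells property, given $f,g:\II^n\to F$ and an open cover $\U$ of $F$, first invoke the disjoint $n$-cells property of $M$ to approximate $f,g$ by maps $\tilde f,\tilde g:\II^n\to M$ with disjoint images. Then push $\tilde f,\tilde g$ off $U$ into $F$ one tame ball at a time: on each $B\in\mathcal U$ whose interior the image enters, use the fact that $\partial B$ is a $Z$-set in $\bar B$ to replace the map in $\bar B$ by one avoiding $B$, supported in $\bar B$ with an $\e_B$-control chosen so small (relative to $\U$ and to the distance from the images of the other cell) that the $Z$-set property of $\partial B$ in $M\setminus B$ prevents reintroducing intersections across $\partial B$. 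The vanishing property of $\mathcal U$ ensures only finitely many balls exceed any prescribed diameter, so the telescoped modification produces a well-defined map into $F$ that is $\U$-close to the original. The $\mathrm{ANE}[n]$ condition is verified analogously: first extend via $\mathrm{ANE}[n]$-ness of $M$, then push the extension off $U$ ball-by-ball.

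With the complement assertion in hand, I tackle the uniqueness part. Enumerate the tame balls forming $U=\bigcup_{k\in\w}B_k$ and $U'=\bigcup_{k\in\w}B'_k$ and construct $h:M\to M$ with $h(U)=U'$ by a back-and-forth induction producing finite partial matchings of subfamilies of $\mathcal U$ with subfamilies of $\mathcal U'$ together with ambient homeomorphisms realizing them. At the inductive step, after matching a finite subfamily, the manifold $M$ minus the closures of the already-matched balls is itself a Menger manifold (by the complement assertion applied to the tame open union of those balls), and within it the next unmatched tame ball $B_k$ and a suitably chosen $B'_\ell$ of comparable small diameter are both $Z$-embedded copies of $\mu^n$ with $\mu^n$-boundaries; the Menger $Z$-Set Unknotting Theorem \cite[4.1.15]{Chig} then unknots one onto the other via a homeomorphism of the residual manifold, which we extend by the identity on the matched balls. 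Density of $U,U'$ together with Proposition~\ref{p3}(1) and subdivision of tame balls (Theorem~\ref{base}) guarantees that unmatched balls of the required location and diameter always exist. The vanishing property ensures the sequence of partial homeomorphisms converges uniformly to a self-homeomorphism $h$ of $M$ with $h(U)=U'$.

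The main obstacle I anticipate lies in Part~2: controlling the ball-by-ball pushing so that disjointness (or the extension property) is preserved globally across the entire vanishing family. Each local modification inside $\bar B$ must both remove the image from $B$ and avoid creating fresh intersections across $\partial B$ with parts of the image that lie in $M\setminus \bar B$ and in adjacent balls of $\mathcal U$. This is exactly where the two-sided assumption that $\partial B$ is a $Z$-set in both $\bar B$ and $M\setminus B$ is indispensable: it lets one perturb the map on either side of $\partial B$ off $\partial B$ itself, so that compositions of pushes over distinct balls are mutually compatible. Once this uniform two-sided pushing is set up, the rest of the argument transcribes the strategy of \cite{BR2} from cube manifolds to the Menger setting with only cosmetic changes.
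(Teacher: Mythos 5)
There is a genuine gap in your treatment of the second assertion. Verifying Bestvina's conditions (Theorem~\ref{best2}) for $F=M\setminus U$ only shows that $F$ is \emph{some} $\mu^n$-manifold; it does not show $F$ is homeomorphic to $M$, which is what the theorem claims and what your uniqueness argument later consumes. The paper closes this gap by a device you never introduce: a $UV^{n-1}$-retraction $r_B:\bar B\to\partial B$ for each tame ball (Lemma~\ref{l3.5}, built from Chigogidze's Theorem~4.3.5 and $Z$-set unknotting), assembled via the vanishing property into a global proper $UV^{n-1}$-retraction $r:M\to M\setminus U$; then Proposition~\ref{near} (proper $UV^{n-1}$-surjections between $\mu^n$-manifolds are near homeomorphisms) yields $M\setminus U\cong M$. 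This retraction also does the work you assign to ``ball-by-ball pushing'': the $\ANE[n]$ property of $F$ is immediate because a retract of an $\ANE[n]$-space is $\ANE[n]$, and in the disjoint-cells argument one only treats the finitely many large balls by hand and then applies $r$ to sweep the rest into $F$. Note also that the tool you cite for evacuating a ball --- $\partial B$ being a $Z$-set in $\bar B$ --- points the wrong way: it lets you push maps \emph{off} $\partial B$ into $B$, not out of $B$ onto $\partial B$; for the latter you again need the retraction $r_B$.

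Your back-and-forth for the uniqueness part is a genuinely different route from the paper, which instead feeds the two families of tame closed balls into the decomposition machinery of \cite{BR1}: the family of tame closed balls is tame (Lemma~\ref{l:K-tame}), the induced vanishing decompositions are strongly shrinkable (Proposition~\ref{strong-shrink}), and Theorem~\ref{c2.6} then produces the controlled ambient homeomorphism (Proposition~\ref{p4.3}). Your direct matching could in principle work, but as written two points are unsupported: (i) the convergence of the infinite composition of unknotting homeomorphisms to a homeomorphism requires the usual inductive displacement estimates (for $h$ \emph{and} $h^{-1}$), which you do not set up; and (ii) $Z$-set unknotting in the form available here (Theorem~\ref{Z-unknot}) is stated for the cube $\mu^n$, so the unknotting of $B_k$ onto $B'_\ell$ must be localized inside a common tame closed ball (as in the proof of Lemma~\ref{l:K-tame}) rather than performed in the whole residual manifold, where unknotting requires additional homotopy hypotheses.
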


This theorem follows from Propositions~\ref{p3.6} and \ref{p4.3}, which will be proved in Sections~\ref{s3} and \ref{s4}, respectively. Using Theorem~\ref{tameopen} and Proposition~\ref{p3}(1) we can give a promised partial answer to Problem~\ref{prob1.6}.

\begin{theorem} Each Menger manifold $M$ contains a $\Z_0$-universal subset $A\subset M$, which is homeomorphic to $M$.
\end{theorem}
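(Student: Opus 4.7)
The plan is to take $A$ to be the complement of a dense tame open set. By Proposition~\ref{p3}(1) applied to $M$ itself (viewed as an open subset of $M$), there is a dense tame open set $U\subset M$. Set $A=M\setminus U$. Then $A$ is closed in $M$ and, since $U$ is dense, $A$ is nowhere dense, so $A\in\Z_0$. The ``moreover'' clause of Theorem~\ref{tameopen} gives that $A=M\setminus U$ is homeomorphic to the Menger manifold $M$, which will yield the last assertion of the theorem.

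To verify $\Z_0$-universality, I take an arbitrary closed nowhere dense $B\subset M$ and aim to produce a homeomorphism $h\colon M\to M$ with $h(B)\subset A$. Since $B$ is closed and nowhere dense, its complement $M\setminus B$ is an open dense subset of $M$. Proposition~\ref{p3}(1) applied to this open set yields a dense tame open set $U'\subset M\setminus B$; note that $U'$ is in fact dense in all of $M$ because $M\setminus B$ is. Thus both $U$ and $U'$ are dense tame open sets in $M$.

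Now Theorem~\ref{tameopen} (Uniqueness Theorem for Dense Tame Open Sets) furnishes a homeomorphism $g\colon M\to M$ with $g(U)=U'$. Since $U'\cap B=\emptyset$, we have $B\subset M\setminus U'=M\setminus g(U)=g(M\setminus U)=g(A)$, and therefore $h:=g^{-1}$ satisfies $h(B)\subset A$, as required. This shows that $A$ is $\Z_0$-universal.

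No serious obstacle is expected: the proof is essentially a packaging of the two tools prepared earlier in the excerpt, namely the cofinality of dense tame open sets inside open sets (Proposition~\ref{p3}(1)) and the ambient uniqueness of dense tame open sets together with the identification of their complements with $M$ (Theorem~\ref{tameopen}). The only small point to keep in mind is verifying that the tame open set constructed inside $M\setminus B$ is automatically dense in the whole manifold, which follows from the density of $M\setminus B$.
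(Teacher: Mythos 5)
Your proposal is correct and follows essentially the same route as the paper: obtain a dense tame open set $U$ via Proposition~\ref{p3}(1), set $A=M\setminus U$, and for a given closed nowhere dense $B$ find a dense tame open set inside $M\setminus B$ and apply the Uniqueness Theorem~\ref{tameopen} (the paper takes the homeomorphism in the opposite direction, sending the new set onto $U$, but this is the same argument).
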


\begin{proof} By Proposition~\ref{p3}, the Menger manifold $M$ contains a dense tame open set $U\subset M$. By Theorem~\ref{tameopen}, the complement $A=M\setminus U$ is a closed nowhere dense subset of $M$, homeomorphic to $M$. To prove that the set $A$ is $\Z_0$-universal, take any closed nowhere dense subset $B\subset X$. By Proposition~\ref{p3}, the dense open set $X\setminus B$ contains a dense tame open set $V\subset M$. By Theorem~\ref{tameopen}, there exists a homeomorphism $h:M\to M$ such that $h(V)=U$ and hence $h(B)\subset h(M\setminus V)=M\setminus U=A$. This proves that the set $A$ is $\Z_0$-universal.
\end{proof}

Next, we turn to universal $\sigma\Z_0$-sets in Menger manifolds. We shall exploit the following uniqueness theorem, which will be proved in Section~\ref{s5}.

\begin{theorem}[Uniqueness Theorem for Dense Tame $G_\delta$-Sets]\label{t3} Any two dense tame $G_\delta$-sets $G,G'$ in a Menger manifold $M$ are ambiently homeomorphic. Moreover, for each open cover $\U$ of $M$ there is a homeomorphism $h:(M,G)\to(M,G')$, which is $\U$-near to the identity homeomorphism of $M$.
\end{theorem}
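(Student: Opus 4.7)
The plan is to adapt the back-and-forth construction from the analogous theorem for Hilbert-cube and $\II^n$-manifolds in \cite{BR2} to the Menger setting. By Proposition~\ref{p2}, represent $G=\ulim\mathcal T$ and $G'=\ulim\mathcal T'$ for tame families $\mathcal T=\{T_k\}_{k\in\w}$ and $\mathcal T'=\{T_k'\}_{k\in\w}$ of tame open balls in $M$. Density of $G$ and $G'$ in $M$ implies that $\bigcup\mathcal T$ is dense in $M$ and, more generally, for every $T\in\mathcal T$ the union of the proper descendants of $T$ in the inclusion tree is dense in $T$ (and symmetrically for $\mathcal T'$). Fix a sequence of open covers $\U\succ\U_0\succ\U_1\succ\cdots$ of $M$ shrinking fast enough that any infinite composition of homeomorphisms with the $n$-th factor $\U_n$-near to $\id_M$ converges to a homeomorphism $\U$-near to $\id_M$.

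Construct by induction on $n\in\w$ finite subfamilies $\F_n\subset\mathcal T$, $\F_n'\subset\mathcal T'$, an order-preserving bijection $\phi_n\colon\F_n\to\F_n'$ (with respect to inclusion of closures), and a homeomorphism $h_n\colon M\to M$ satisfying $h_n(\bar T)=\overline{\phi_n(T)}$ for every $T\in\F_n$ and $(h_n,h_{n-1})\prec\U_n$. Alternate even/odd stages: at stage $2k$ ensure $T_k\in\F_{2k+1}$ unless $T_k$ lies in a closure of some already matched ball; at stage $2k+1$ perform the symmetric operation for $T_k'$. The vanishing property of $\mathcal T,\mathcal T'$ forces $\diam(T_k),\diam(T_k')\to 0$, so $(h_n)$ converges uniformly to a homeomorphism $h\colon M\to M$ which is $\U$-near to $\id_M$. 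The order-preserving bijection $\phi=\bigcup_n\phi_n\colon\mathcal T\to\mathcal T'$ then yields $h(G)=h(\ulim\mathcal T)=\ulim\mathcal T'=G'$.

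The technical heart is the extension step. Suppose at stage $n$ we want to add $T_k\in\mathcal T$; let $T\in\F_n$ be the minimal matched ball whose closure contains $\bar T_k$ (or set $T=M$, $\phi_n(T)=M$ if no such ball exists). In the region $R=T\setminus\bigcup\{\bar S:S\in\F_n,\bar S\subsetneq T\}$ with image $h_n(R)\subset\phi_n(T)$, use Theorem~\ref{base} and the density of the descendants of $\phi_n(T)$ in $\mathcal T'$ to select a tame open ball $T''\in\mathcal T'$ with $\bar T''\subset h_n(R)$, disjoint from every $\overline{\phi_n(S)}$ with $S\in\F_n$, and of sufficiently small diameter. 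Then apply the $Z$-Set Unknotting Theorem for Menger manifolds \cite[4.1.15]{Chig} twice: first, inside $h_n(\overline R)$ to match the $Z$-sets $\partial h_n(T_k)$ and $\partial T''$ (both homeomorphic to $\mu^n$ by Theorem~\ref{best1}) by a homeomorphism of $M$ supported in a $\U_{n+1}$-small neighborhood of $h_n(\bar T_k)\cup\bar T''$; second, inside the Menger cubes $\overline{h_n(T_k)}$ and $\bar T''$ to extend the boundary matching across, using that $\partial h_n(T_k)$ and $\partial T''$ are $Z$-sets in their respective balls. Let $g$ be the resulting homeomorphism, set $h_{n+1}=g\circ h_n$, $\F_{n+1}=\F_n\cup\{T_k\}$, $\phi_{n+1}(T_k)=T''$.

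The main obstacle is executing this extension step with enough flexibility to respect the nesting and disjointness constraints forced by the partial matching and the tree structure of tame families, while simultaneously keeping the correction $\U_{n+1}$-small. Density of $G$ and $G'$ guarantees a supply of candidate balls of arbitrarily small diameter in every required region; Theorem~\ref{base} ensures such balls form a base; and the Menger $Z$-Set Unknotting Theorem, combined with the fact that boundaries of tame balls are $Z$-sets on both sides, makes the required local modification of $h_n$ feasible. A careful alternating enumeration exhausts both $\mathcal T$ and $\mathcal T'$ in the limit, producing the ambient homeomorphism $h$ with $h(G)=G'$ that is $\U$-near to $\id_M$.
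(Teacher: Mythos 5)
Your overall strategy (a back-and-forth on tame families, with $Z$-set unknotting supplying the individual moves and the vanishing property supplying convergence) is in the right spirit, but it is not the route the paper takes, and as written it has genuine gaps. The paper first splits off the case $\dim M=0$: there it abandons tame families entirely, represents $M\setminus G$ and $M\setminus G'$ as increasing unions of compacta without isolated points, each nowhere dense in the next, and invokes the skeletoid technique. Your argument does not address this case, and the paper's remaining argument is explicitly restricted to $\dim M>0$. For $\dim M>0$ the proof (by analogy with Theorem~3 of \cite{BR2}) works not with the trees of Proposition~\ref{p2} but with the presentation $G=\bigcap_{n}\bigcup\U_n$ from the definition of a tame $G_\delta$-set, in which each level $\U_n$ is a \emph{disjoint} vanishing family; it matches $\U_n$ with $\U_n'$ level by level by iterating Proposition~\ref{p4.3} (i.e., Theorem~\ref{c2.6}) relativized to the balls of the previous level, the control $(\Phi,\id)\prec\W$ there together with the vanishing of $\bigcup_n\U_n$ keeping the successive corrections small. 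This packages the entire combinatorial difficulty of your construction into results already proved in Sections 2--4, none of which your argument uses.

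The concrete gaps are these. First, the clause ``unless $T_k$ lies in a closure of some already matched ball'' would leave every non-maximal member of $\mathcal T$ unmatched, so $\phi$ would not be total and the inclusion $h(\ulim\mathcal T)\subset\ulim\mathcal T'$ would fail; presumably you meant to delete it, since your extension step treats precisely the case $\bar T_k\subset\bar T$ for a matched $T$. Second, and decisively, your extension step constrains $T''$ only from above and from the side ($\bar T''\subset h_n(R)$, disjoint from the other matched images); it says nothing about matched balls $S$ with $\bar S\subsetneq T_k$. Such $S$ necessarily arise from earlier back steps: the ball of $\mathcal T$ chosen there as $\phi^{-1}(T_j')$ has unmatched $\mathcal T$-ancestors, and when one of those ancestors is later processed its partner is forced to be a $\mathcal T'$-ancestor of $T_j'$ at exactly the right depth of the $\mathcal T'$-tree --- there are only finitely many candidates and possibly none at that depth, so the partner can no longer be ``selected from a dense supply.'' You name this as ``the main obstacle'' but do not resolve it; resolving it requires matching entire chains at each back step, or the level-by-level scheme above. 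Third, fixing the covers $\U_0\succ\U_1\succ\cdots$ in advance does not guarantee that the infinite composition converges to a \emph{homeomorphism} (injectivity of the limit can fail); each $\U_{n+1}$ must be chosen only after $h_n$ is known.
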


Applying Theorem~\ref{t3} and Proposition~\ref{p3}(2) we can give another partial answer to Problem~\ref{prob1.6}.

\begin{theorem}[Characterization of $\sigma\Z_0$-Universal Sets in Menger Manifolds]\label{t4} For a subset $A$ of a Menger manifold $X$ the following conditions are equivalent:
\begin{enumerate}
\item $A$ is $\sigma\Z_0$-universal in $X$;
\item $A$ is $\Z_0$-absorptive in $X$;
\item the complement $X\setminus A$ is a dense tame $G_\delta$-set in $X$.
\end{enumerate}
\end{theorem}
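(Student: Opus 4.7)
The plan is to close the cyclic chain of implications (3) $\Rightarrow$ (2) $\Rightarrow$ (1) $\Rightarrow$ (3). Only the first of these requires new geometric content; the other two follow quickly from the abstract Theorems~\ref{c1} and \ref{t1} once the first is in hand.

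For (3) $\Rightarrow$ (2), set $G=X\setminus A$ and assume $G$ is a dense tame $G_\delta$-set in the Menger manifold $X$. Because Menger manifolds are Polish Baire spaces, $A$ is automatically a meager $F_\sigma$-set, so $A\in\sigma\Z_0$. To verify the absorption property, fix a closed nowhere dense set $K\subset X$, an open set $V\subset X$, and an open cover $\U$ of $V$. I will exhibit two dense tame $G_\delta$-sets in the open Menger submanifold $V$: first, by Proposition~\ref{p3}(2) applied to the dense open subset $V\setminus K$, a dense tame $G_\delta$-set $G_1\subset V\setminus K$ in $V$; second, the intersection $G\cap V$ itself. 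Theorem~\ref{t3}, applied inside $V$, then yields a homeomorphism $g:V\to V$, $\U$-near to the identity, with $g(G\cap V)=G_1$. Its inverse $h=g^{-1}$ is still $\U$-near the identity and sends $V\setminus G_1$ onto $V\setminus(G\cap V)=A\cap V$; since $G_1\subset V\setminus K$ forces $K\cap V\subset V\setminus G_1$, we conclude that $h(K\cap V)\subset A\cap V$, which is the required absorption.

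The main subtlety is verifying that $G\cap V$ is genuinely a dense tame $G_\delta$-set of $V$. Using Proposition~\ref{p2}, write $G=\ulim\mathcal T$ for a tame family $\mathcal T$ of tame open balls in $X$, and set $\mathcal T_V=\{T\in\mathcal T:\bar T\subset V\}$. The family $\mathcal T_V$ inherits combinatorial tameness from $\mathcal T$ and remains vanishing in $V$, since every open cover of $V$ can be enlarged by $X\setminus\bar V$ to an open cover of $X$. Moreover, because $\mathcal T$ is vanishing, for every point $x\in G\cap V$ the diameters of the balls $T\in\mathcal T$ containing $x$ tend to zero, so cofinitely many such balls have closures contained in the neighborhood $V$ of $x$; hence $x\in\ulim\mathcal T_V$, and the reverse inclusion $\ulim\mathcal T_V\subset G\cap V$ is automatic. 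Proposition~\ref{p2} now supplies the tame $G_\delta$ structure on $G\cap V$, and density is inherited from the density of $G$ in $X$.

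With (3) $\Rightarrow$ (2) in place, applying it to the complement of any dense tame $G_\delta$-set provided by Proposition~\ref{p3}(2) guarantees the existence of a $\Z_0$-absorptive set in $X$, and Theorem~\ref{c1} then yields the equivalence (1) $\Leftrightarrow$ (2); in particular, (2) $\Rightarrow$ (1). For (1) $\Rightarrow$ (3), pick any dense tame $G_\delta$-set $G\subset X$; its complement $X\setminus G$ is $\Z_0$-absorptive by (3) $\Rightarrow$ (2), while $A$ is $\Z_0$-absorptive by Theorem~\ref{c1}. The Uniqueness Theorem~\ref{t1} for $\Z_0$-absorptive sets, applied with $V=X$, furnishes a homeomorphism $f:X\to X$ with $f(A)=X\setminus G$, whence $X\setminus A=f^{-1}(G)$ is a dense tame $G_\delta$-set of $X$ as the image of one under a self-homeomorphism (tameness being a topological notion). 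This establishes (3) and completes the proof.
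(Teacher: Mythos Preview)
Your argument is correct and follows the same route the paper indicates (it defers to the identical proof of Theorem~4 in \cite{BR2}, which proceeds exactly via Proposition~\ref{p3}, Theorem~\ref{t3}, and the abstract Theorems~\ref{c1} and \ref{t1}). One small slip: the sentence ``every open cover of $V$ can be enlarged by $X\setminus\bar V$ to an open cover of $X$'' is false when $\partial V\neq\emptyset$, so your vanishing argument for $\mathcal T_V$ needs a local patch---given $x\in V$, choose $W_x\in\W$ with $x\in W_x$ and a compact neighborhood $\bar O_x\subset W_x$, and apply the vanishing of $\mathcal T$ in $X$ to the cover $\W\cup\{X\setminus\bar O_x\}$; this yields only finitely many bad balls meeting $O_x$. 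You also use implicitly (and correctly) that a tame open ball $B$ of $X$ with $\bar B\subset V$ is a tame open ball of the $\mu^n$-manifold $V$: the only nontrivial point is that $\partial B$ remains a $Z$-set in $V\setminus B$, which follows since $\partial B\subset V\setminus B$ is a $Z_n$-set in the open subspace $V\setminus B$ of $M\setminus B$ and hence a $Z$-set there by \cite[4.1.13]{Chig}.
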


The proof of this theorem literally repeats the proof of Theorem~4 of \cite{BR2} characterizing $\sigma\Z_0$-universal sets in manifolds modeled on the Hilbert cube.

Taking into account that each meager $F_\sigma$-set containing a $\sigma\Z_0$-absorptive subset is $\sigma\Z_0$-absorptive, we see that Theorem~\ref{t4} implies:

\begin{corollary}  Each dense $G_\delta$-subset of a dense tame $G_\delta$-set in a Menger manifold is tame.
\end{corollary}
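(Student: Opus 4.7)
The plan is to translate the statement to complements and apply Theorem~\ref{t4} twice. Let $G\subset M$ be a dense tame $G_\delta$-set in a Menger manifold $M$ and let $H\subset G$ be a dense $G_\delta$-subset. First I would verify that $H$ itself is a dense $G_\delta$-subset of the ambient manifold $M$: writing $G=\bigcap_n V_n$ with $V_n$ open in $M$ and $H=G\cap\bigcap_n U_n$ with $U_n$ open in $M$, the equality $H=\bigcap_n(V_n\cap U_n)$ shows that $H$ is $G_\delta$ in $M$, while density of $H$ in $G$ together with density of $G$ in $M$ yields density of $H$ in $M$.

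Having done that, I would set $A:=M\setminus H$ and $A_0:=M\setminus G$. Because $H$ is dense $G_\delta$ in $M$, the set $A$ is a meager $F_\sigma$-subset of $M$, i.e., $A\in\sigma\Z_0$. The equivalence (2)$\Leftrightarrow$(3) of Theorem~\ref{t4}, applied to the complement of the dense tame $G_\delta$-set $G$, tells me that $A_0$ is $\Z_0$-absorptive in $M$, and evidently $A_0\subset A$.

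Next I would invoke the observation recorded right after the definition of $\K$-absorptive: every set in $\sigma\K$ that contains a $\K$-absorptive subset is itself $\K$-absorptive. Applied with $\K=\Z_0$ this promotes $A$ from merely ``lying in $\sigma\Z_0$'' to ``$\Z_0$-absorptive in $M$''. A second application of Theorem~\ref{t4}, in the direction (2)$\Rightarrow$(3), then identifies $H=M\setminus A$ as a dense tame $G_\delta$-set in $M$, proving that $H$ is tame.

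The argument is essentially a one-line deduction from Theorem~\ref{t4}, so I do not anticipate any genuine obstacle; the only point that merits care is the initial bookkeeping, i.e., confirming that ``dense $G_\delta$ in $G$'' forces ``dense $G_\delta$ in $M$'', so that the ambient complement $A$ does land in $\sigma\Z_0$ and Theorem~\ref{t4} is available in both directions.
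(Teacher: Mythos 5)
Your argument is exactly the deduction the paper has in mind: the corollary is stated as an immediate consequence of Theorem~\ref{t4} together with the observation that a set in $\sigma\K$ containing a $\K$-absorptive subset is itself $\K$-absorptive, and you have simply written out that deduction (including the correct bookkeeping that a dense $G_\delta$-subset of a dense $G_\delta$-subset of $M$ is again a dense $G_\delta$-subset of $M$). The proof is correct and follows the paper's route.
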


\section{Equivalence of certain decompositions of Polish spaces}

Theorem~\ref{tameopen} will be derived from results \cite{BR1} on the topological equivalence of upper semicontinuous decompositions of a given Polish space. To use these results we need to recall some terminology from \cite{BR1}. First we note that all {\em maps} considered in this paper are continuous.

Let $\A,\BB$ be two families $\A,\BB$ of subsets of a space $X$. We shall write $\A\prec\BB$ and say that the family $\A$ {\em refines} the family $\BB$ if each set $A\in\A$ is contained in some set $B\in\BB$.

A subset $A\subset X$ is called {\em $\BB$-saturated} if $A$ coincides with its {\em $\BB$-star} $\St(A,\BB)=\bigcup\{B\in\BB:A\cap B\ne \emptyset\}$. The family $\A$ is called {\em $\BB$-saturated} if each set $A\in\A$ is $\BB$-saturated. The family $\St(\A,\BB)=\{\St(A,\BB):A\in\BB\}$ will be called the {\em $\BB$-star} of the family $\A$, and $\St(\A)=\St(\A,\A)$ is the {\em star} of $\A$.

By a {\em decomposition} of a topological space $X$ we understand a cover $\DD$ of $X$ by pairwise disjoint non-empty compact subsets. For each decomposition $\DD$ we can consider the quotient map $q_\DD:X\to \DD$ assigning to each point $x\in X$ the unique compact set $q(x)\in\DD$ that contains $x$. The quotient map $q_\DD$ induces the quotient topology on $\DD$ turning $\DD$ into a topological space called the {\em decomposition space} of the decomposition $\DD$. Sometimes to distinguish a decomposition $\DD$ from its decomposition space we shall denote the latter space by $X/\DD$.

A decomposition $\DD$ of a topological space $X$ is said to be {\em upper semicontinuous} if for each closed subset $F\subset X$ its {\em $\DD$-saturation} $\St(F,\DD)=\bigcup\{D\in\DD:D\cap F\ne\emptyset\}$ is closed in $X$. It is easy to see that a decomposition $\DD$ of $X$ is upper semicontinuous if and only if the quotient map $q_\DD:X\to X/\DD$ is closed if and only if the quotient map $q_\DD$ is perfect (the latter means that $q_\DD$ is closed and for each point $y\in X/\DD$ the preimage $q_\DD^{-1}(y)$ is compact).
Since the metrizability (and local compactness) of spaces is preserved by perfect maps (see \cite[3.7.21]{En} and \cite[4.4.15]{En}), we get the following:

\begin{lemma}\label{l2.1} For every upper semicontinuous decomposition $\mathcal D$ of a metrizable (locally compact) space $X$ the decomposition space $X/\mathcal D$ is metrizable (and locally compact).
\end{lemma}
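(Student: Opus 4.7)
The plan is to reduce the lemma to two standard facts about perfect maps, both of which have just been recalled or cited in the paragraph immediately preceding the statement. Concretely, I would argue as follows.

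First, I would spell out explicitly why the quotient map $q_\DD \colon X \to X/\DD$ is perfect. Upper semicontinuity of $\DD$ gives, by the characterization stated right before the lemma, that $q_\DD$ is a closed surjection. The fact that each member of $\DD$ is a compact subset of $X$ means that for each $y \in X/\DD$ the preimage $q_\DD^{-1}(y)$ is exactly the compact element of $\DD$ corresponding to $y$. Hence $q_\DD$ is closed with compact fibers, i.e.\ a perfect surjection. (This step is essentially just pointing at the already-stated equivalence, so it is only a matter of writing the equivalence cleanly.)

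Second, I would invoke the two preservation results from Engelking that the authors cite in the preceding paragraph: a perfect image of a metrizable space is metrizable (Engelking 4.4.15), and a perfect image of a locally compact space is locally compact (Engelking 3.7.21). Applying these to $q_\DD$ yields that $X/\DD$ is metrizable whenever $X$ is metrizable, and locally compact whenever $X$ is locally compact.

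Since both ingredients are standard results quoted verbatim from the paragraph before the lemma, the only ``work'' is to articulate the equivalence between upper semicontinuity and perfectness of $q_\DD$, which is also stated explicitly in that paragraph. Thus I anticipate no real obstacle; the lemma is a one-line application of the perfect-map machinery, and the proof will be correspondingly short. The only judgment call is stylistic: whether to reprove the equivalence ``upper semicontinuous $\Leftrightarrow$ $q_\DD$ closed $\Leftrightarrow$ $q_\DD$ perfect'' or simply to refer back to the discussion preceding the lemma. I would choose the latter to avoid redundancy, making the proof essentially a two-sentence citation of Engelking's theorems applied to $q_\DD$.
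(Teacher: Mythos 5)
Your proof is correct and is essentially identical to the paper's: the authors likewise observe that upper semicontinuity makes $q_\DD$ a perfect map and then invoke the cited preservation theorems from Engelking. No further comment is needed.
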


Observe that a decomposition $\DD$ of a topological space $X$ is {\em vanishing} if and only if for each open cover $\U$ of $X$ the subfamily $\DD'=\{D\in\DD:\forall U\in\U\;\;D\not\subset U\}$ is discrete in $X$ in the sense that each point $x\in X$ has a neighborhood $O_x\subset X$ that meets at most one set $D\in\DD'$.

Each  vanishing disjoint family $\C$ of non-empty compact subsets of a topological space $X$ generates the  vanishing decomposition
$$\dot\C=\C\cup\big\{\{x\}:x\in X\setminus\textstyle{\bigcup}\C\big\}$$of the space $X$.
In particular, each non-empty compact set $K\subset X$ induces the vanishing decomposition $\{K\}\cup\big\{\{x\}:x\in X\setminus K\}$ whose decomposition space will be denoted by $X/K$.
By $q_K:X\to X/K$ we shall denote the corresponding quotient map. By Lemma~2.2 of \cite{BR1}, each vanishing decomposition $\DD$ of a regular space $X$ is upper semicontinuous.

A decomposition $\DD$ of a space $X$ will be called {\em dense} if its {\em non-degeneracy part} $$\DD^\circ=\{D\in\DD:|D|>1\}$$ is dense in the decomposition space $\DD=X/\DD$.

A decomposition $\DD$ of a topological space $X$ is called
\begin{itemize}
\item {\em shrinkable} if for each $\DD$-saturated open cover $\U$ of $X$ and each open cover $\V$ of $X$ there is a homeomorphism $h:X\to X$ such that $(h,\id_X)\prec\U$ and $\{h(D):D\in\DD\}\prec\V$;
\item {\em strongly shrinkable} if for each $\DD$-saturated open set $U\subset X$ the decomposition $\DD|U=\{D\in\DD:D\subset U\}$ of $U$ is shrinkable.
\end{itemize}
A compact subset $K$ of a topological space $X$ is called {\em locally shrinkable}  if for each neighborhood $O(K)\subset X$ and any open cover $\V$ of $O(K)$ there is a homeomorphism $h:X\to X$ such that $h|X\setminus O(K)=\id$ and $h(K)$ is contained in some set $V\in\V$. It is easy to see that a compact subset $K\subset X$ is locally shrinkable if and only if  the decomposition $\{K\}\cup\big\{\{x\}:x\in X\setminus K\big\}$ of $X$ is strongly shrinkable (cf. \cite[p.42]{Dav}).

(Strongly) shrinkable decompositions are closely connected with (strong) near homeomorphisms.

A map $f:X\to Y$ between topological spaces will be called a
\begin{itemize}
\item a {\em near homeomorphism} if for each open cover $\U$ of $Y$ there is a homeomorphism $h:X\to Y$ such that $(h,f)\prec\U$;
\item a {\em strong near homeomorphism} if for each open set $U\subset Y$ the map $f|f^{-1}(U):f^{-1}(U)\to U$ is a near homeomorphism.
\end{itemize}

The following Shrinkability Criterion was proved in \cite[Theorem 2.6]{Dav}.

\begin{theorem}[Shrinkability Criterion]\label{t:shrink} An upper semicontinuous decomposition $\DD$ of a completely metrizable space $X$ is (strongly) shrinkable if and only if the quotient map $q_\DD:X\to X/\DD$ is a (strong) near homeomorphism.
\end{theorem}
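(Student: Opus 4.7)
The plan is to prove both implications of Bing's classical Shrinkability Criterion, with the strongly shrinkable case following by applying the ordinary version to each $\DD$-saturated open $U\subset X$ (the restriction $q_\DD|U$ is itself the quotient map of the restricted decomposition $\DD|U$). I would fix a complete metric $d$ on $X$ compatible with its topology and, invoking Lemma~\ref{l2.1}, a compatible metric on $X/\DD$.

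For the direction asserting that shrinkability of $\DD$ implies $q_\DD$ is a near homeomorphism, fix an open cover $\V$ of $X/\DD$. I would apply the shrinkability hypothesis inductively to build homeomorphisms $h_n:X\to X$ and $\DD$-saturated open covers $\U_n$ of $X$ so that the compositions $H_n=h_n\circ\cdots\circ h_1$ and their inverses $H_n^{-1}$ are both uniformly Cauchy (with $h_n$ moving points at most $2^{-n}$ in the relevant metric), while simultaneously each $H_n(D)$ has $d$-diameter at most $2^{-n}$ for every $D\in\DD$. Completeness then yields a uniform limit $H:X\to X$ that is constant on each $\DD$-class, hence factors as $H=\tilde f\circ q_\DD$ for some continuous $\tilde f:X/\DD\to X$; the limit of $H_n^{-1}$ provides a continuous inverse to $\tilde f$. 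Choosing the $\U_n$ sufficiently fine from the start ensures that $f:=\tilde f^{-1}:X\to X/\DD$ is $\V$-near to $q_\DD$.

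For the converse, fix a $\DD$-saturated open cover $\U$ and an arbitrary open cover $\V$ of $X$. Upper semicontinuity of $\DD$ combined with $\U$-saturation allows me to pick an open cover $\W$ of $X/\DD$ so fine that $q_\DD^{-1}(\St(W,\W))\prec\U$ for every $W\in\W$. Select first a homeomorphism $f_1:X\to X/\DD$ which is $\W$-near to $q_\DD$, then a star refinement $\W'$ of $\W$ chosen so that $f_1^{-1}$ sends each star-element of $\W'$ into some $V\in\V$, and finally a second homeomorphism $f_0:X\to X/\DD$ which is $\W'$-near to $q_\DD$. The composition $h=f_1^{-1}\circ f_0:X\to X$ is then a homeomorphism: since $q_\DD\circ h$ and $q_\DD$ are $\W$-near, one gets $(h,\id)\prec\U$ via the refinement property of $\W$; since $f_0$ is nearly constant on each $D$, one has $f_0(D)$ inside a star-element of $\W'$, whence $h(D)=f_1^{-1}(f_0(D))\subset V$ for some $V\in\V$.

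The main obstacle is controlling the uniform convergence of both $H_n$ and $H_n^{-1}$ in the forward direction: one must choose each $\U_n$ not merely $\DD$-saturated and fine in $X$, but also fine relative to the already-shrunk images $H_{n-1}(D)$, so that no decomposition element is re-expanded by a subsequent $h_n$. This telescoping estimate is the technical heart of Bing's argument and is precisely where completeness of $X$ is indispensable.
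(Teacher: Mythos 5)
The paper offers no proof of this statement---it is quoted from Daverman's book \cite[Theorem 2.6]{Dav}---so your argument can only be assessed on its own terms. Your converse direction (that $q_\DD$ being a near homeomorphism implies shrinkability) is correct and standard: the cover $\W$ with $q_\DD^{-1}(\St(W,\W))\prec\U$ exists because the images $q_\DD(U)$, $U\in\U$, of the saturated cover form an open cover of the paracompact space $X/\DD$ admitting a star-refinement, and the composition $h=f_1^{-1}\circ f_0$ then behaves exactly as you claim. The reduction of the strong case to the ordinary one via $\DD$-saturated open subsets (which are completely metrizable, being open in $X$) is also fine.

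The forward direction, however, has a genuine gap: the inverses $H_n^{-1}$ cannot be uniformly Cauchy as self-maps of $X$ once you also insist that $\diam H_n(D)\to 0$ for every $D\in\DD$ and $\DD$ has a nondegenerate element. Indeed, suppose $H_n^{-1}\to G$ uniformly; then $G$ is continuous. Pick $a\ne b$ in a nondegenerate $D\in\DD^\circ$ and set $y_n=H_n(a)$, $z_n=H_n(b)$; both sequences converge to the single point $p=H(D)$, while uniform convergence gives $G(p)=\lim H_n^{-1}(y_n)=a$ and $G(p)=\lim H_n^{-1}(z_n)=b$, a contradiction. So ``the limit of $H_n^{-1}$'' does not exist, and in any case it could not invert $\tilde f:X/\DD\to X$, whose inverse must be a map $X\to X/\DD$ rather than $X\to X$. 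The standard repair---and the actual content of Daverman's proof---is to pass to the quotient before taking limits: the maps $q_\DD\circ H_n^{-1}:X\to X/\DD$ \emph{can} be made uniformly Cauchy, because the re-expansion performed by $H_n^{-1}$ takes place inside decomposition elements, which $q_\DD$ collapses. Their limit is the desired homeomorphism $X\to X/\DD$ that is $\V$-near to $q_\DD$, with injectivity secured by keeping the shrunken images of distinct elements definitely separated at each stage, and surjectivity and closedness coming from completeness and the properness of $q_\DD$. Your closing remark about choosing $\U_n$ fine relative to the already-performed shrinking is the right instinct, but it must be applied to $q_\DD\circ H_n^{-1}$, not to $H_n^{-1}$ itself.
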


We shall say that a decomposition $\A$ of a topological space $X$ is {\em topologically equivalent} to a decomposition $\BB$ of a topological space $Y$ if there is a homeomorphism $\Phi:X\to Y$ such that the
decomposition $\Phi(\A)=\{\Phi(A):A\in\A\}$ of $Y$ is equal to the decomposition $\BB$.

Now we shall formulate some conditions of topological equivalence of decompositions of Polish spaces.
First we introduce two definitions from \cite{BR1}.

\begin{definition}\label{d:K-tame} Let $\K$ be a family of compact subsets of a topological space $X$.
We shall say that the family $\K$
\begin{itemize}
\item is {\em topologically invariant} if for each homeomorphism $h:X\to X$ and each set $K\in\K$ we get $h(K)\in\K$;
\item has the {\em local shift property} if for any point $x\in X$ and a neighborhood $O_x\subset X$ there is a neighborhood $U_x\subset O_x$ of $x$ such that for any sets $A,B\in\K$ with $A,B\subset U_x$ there is a homeomorphism $h:X\to X$ such that $h(A)=B$ and $h|X\setminus O_x=\id|X\setminus O_x$;
\item {\em tame} if $\K$ is topologically invariant, consists of locally shrinkable sets, has the local shift property, and each non-empty open subset $U\subset X$ contains a set $K\in\K$.
\end{itemize}
\end{definition}

Now we can define $\K$-tame decompositions.

\begin{definition}\label{d:d-Ktame} Let $\K$ be a tame family of compact subsets of a Polish space $X$. A decomposition $\DD$ of $X$ is called {\em $\K$-tame} if $\DD$ is vanishing, strongly shrinkable, and $\DD^\circ\subset\K$.
\end{definition}

The following theorem, proved in \cite[2.6]{BR1}, yields many examples of $\K$-tame decompositions.

\begin{theorem}\label{t:exist} Let $\K$ be a tame family of compact subsets of a completely metrizable space $X$ such that each set $K\in\K$ contains more than one point. For any open set $U\subset X$ there is a $\K$-tame decomposition $\DD$ of $X$ such that $\bigcup\DD^\circ$ is a dense subset of $U$.
\end{theorem}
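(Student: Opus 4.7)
The plan is to construct a pairwise disjoint, vanishing family $\C=\{K_n:n\in\IN\}\subset\K$ lying inside $U$ with $\bigcup\C$ dense in $U$, and to check that the induced decomposition $\DD=\C\cup\{\{x\}:x\in X\setminus\bigcup\C\}$ is $\K$-tame. The three clauses in the definition of a tame family play complementary roles: ``every non-empty open set contains a member of $\K$'' produces the $K_n$, local shrinkability delivers strong shrinkability, and completeness of the metric guarantees the vanishing property.

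First I fix a complete metric $d$ on $X$ compatible with the topology together with a countable base $\{O_n\}_{n\in\IN}$ for $U$ (assuming separability; the non-separable case is handled by an analogous transfinite recursion along a well-ordered base). Inductively, at stage $n$ I pick $K_n\in\K$ of $d$-diameter less than $2^{-n}$ lying inside $O_n\setminus(K_1\cup\cdots\cup K_{n-1})$ whenever this set is non-empty, and skip stage $n$ otherwise. Such a $K_n$ exists because $K_1\cup\cdots\cup K_{n-1}$ is compact, so the set in question is open, and every non-empty open subset of $X$ contains a member of $\K$ (apply this to any open $d$-ball of radius $<2^{-n-1}$ inside it). The resulting family $\C$ is pairwise disjoint with $\diam_d(K_n)\to 0$, hence is vanishing in $X$; and it meets every basic open subset of $U$ that it possibly can, so $\bigcup\C$ is dense in $U$.

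The decomposition $\DD$ inherits the vanishing property from $\C$, and since every $K\in\K$ has more than one point, $\DD^\circ=\C\subset\K$; the fact (Lemma~2.2 of \cite{BR1}) cited in the text that vanishing decompositions of a regular space are upper semicontinuous then shows $\DD$ is upper semicontinuous.

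The main obstacle is to verify that $\DD$ is \emph{strongly} shrinkable. By definition this means proving, for every $\DD$-saturated open set $V\subset X$, every $\DD$-saturated open cover $\U$ of $V$, and every open cover $\V$ of $V$, that there is a homeomorphism $h:V\to V$ with $(h,\id)\prec\U$ and $\{h(D):D\in\DD|V\}\prec\V$. Since $\C$ is vanishing, only finitely many $K_{n_1},\dots,K_{n_m}\in\C|V$ fail to lie in some element of $\V$. For each such $K_{n_j}$ pick $U_j\in\U$ with $K_{n_j}\subset U_j$ (possible because $\U$ is $\DD$-saturated and $K_{n_j}$ is a $\DD|V$-fibre), and use disjointness of the compact sets $K_n$ together with the vanishing of $\C$ to choose pairwise disjoint open neighborhoods $W_j\subset U_j$ of $K_{n_j}$ meeting no other $K_i$. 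Local shrinkability of $K_{n_j}$, applied to $W_j$ and to the cover $\V|W_j$, yields a homeomorphism $h_j:X\to X$ supported in $W_j$ which pushes $K_{n_j}$ into some element of $\V$. The composition $h=h_m\circ\cdots\circ h_1$ restricts to the required homeomorphism of $V$: the disjointness of the supports $W_j\subset U_j$ gives $(h,\id)\prec\U$, and the remaining $K_i\in\C|V$ were already refined by $\V$. This establishes strong shrinkability and completes the proof.
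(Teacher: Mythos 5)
This theorem is not proved in the present paper: it is imported from \cite[2.6]{BR1}, so there is no printed proof to compare against line by line. Judged on its own terms, your construction of the disjoint family $\C=\{K_n\}$ is fine: the diameters $\diam_d(K_n)<2^{-n}$ do force the vanishing property, $\bigcup\C$ is dense in $U$, and $\DD^\circ=\C\subset\K$. The genuine gap is in the verification of strong shrinkability, and it is not a repairable technicality but the central difficulty the theorem's hypotheses are designed to address. A first (fixable) slip: vanishing only makes the members of $\C|V$ not refined by $\V$ a \emph{locally finite} family, not a finite one. The fatal step is the requirement that each bad element $K_{n_j}$ have a neighborhood $W_j$ ``meeting no other $K_i$''. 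No such neighborhood need exist: since $\bigcup\C$ is dense in $U$ and the $K_i$ are pairwise disjoint, infinitely many members of $\C$ necessarily accumulate on each $K_{n_j}$ (think of the closures of the complementary intervals of a Cantor set, or of a dense vanishing family of tame closed balls --- precisely the case this paper needs). Hence every admissible $W_j$ contains infinitely many other $K_i$, and the homeomorphism $h_j$ supplied by local shrinkability controls only the image of $K_{n_j}$; it may stretch the nearby $K_i$ arbitrarily, so the required condition $\{h(D):D\in\DD|V\}\prec\V$ can fail for the distorted elements.

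Your argument, if valid, would show that \emph{every} vanishing disjoint family of locally shrinkable compacta generates a strongly shrinkable decomposition, using only two of the four clauses in the definition of a tame family; in particular the local shift property would be superfluous. That general statement is false --- this is the classical phenomenon in decomposition theory that shrinking a null family cannot be done one element at a time with disjoint supports, because each shrinking move disturbs the neighboring elements and the cumulative distortion must be controlled. A correct proof has to construct the decomposition together with the shrinking data (for instance by a controlled limiting argument), and it is exactly at this point that the local shift property of $\K$, which your proof never invokes, is needed.
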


We shall say that a topological space $X$ is {\em strongly locally homogeneous} if the family of singletons $\big\{\{x\}\big\}_{x\in X}$ is tame. This happens if and only if this family has the local shift property. So, our definition of the strong local homogeneity agrees with the classical one introduced in \cite{Bennett}. It is easy to see that each connected strongly locally homogeneous space is {\em topologically homogeneous} in the sense that for any two points $x,y\in X$ there is a homeomorphism $h:X\to X$ with $h(x)=y$.

\begin{theorem}\label{c2.6} For any tame family $\K$ of compact subsets of a strongly locally homogeneous completely metrizable space $X$, any two dense $\K$-tame decompositions $\A,\mathcal B$ of $X$ are topologically equivalent. Moreover, for any open cover $\U$ of $X$ there is a homeomorphism $\Phi:X\to X$ such that $\Phi(\A)=\mathcal B$ and $(\Phi,\id_X)\prec\W$, where $$\W=\{\St(A,\U)\cup\St(B,\U):A\in\A,\;B\in\BB,\;\;\St(A,\U)\cap\St(\BB,\U)\ne\emptyset\}.$$
\end{theorem}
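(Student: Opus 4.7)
The plan is to construct $\Phi$ as the uniform limit of a Cauchy sequence of homeomorphisms $\Phi_n\colon X\to X$ obtained by a back-and-forth argument that pairs the non-degenerate elements of $\A$ with those of $\BB$ one pair at each stage. The inputs are strong shrinkability of both $\A$ and $\BB$ (which, via the Shrinkability Criterion in Theorem~\ref{t:shrink}, means their quotient maps are strong near homeomorphisms), the local shift property of the tame family $\K$, and the strong local homogeneity of $X$ (which allows free shifting of singletons).

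I would first enumerate $\A^\circ=\{A_i\}_{i\in\w}$ and $\BB^\circ=\{B_j\}_{j\in\w}$; this is possible because both families are vanishing collections of compact sets of positive diameter in the separable metrizable space $X$. Fix a complete metric $d$ on $X$ and build a tower of open covers $\U=\U_0\succ \U_1\succ\U_2\succ\cdots$ in which each $\U_{n+1}$ is a star-refinement of $\U_n$ and $\max_{U\in\U_n}\diam(U)\to 0$, so that any $(\U_n)$-controlled Cauchy sequence of homeomorphisms converges uniformly and inherits the desired $\W$-closeness to $\id_X$.

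At odd stage $n$ I would pick the least $i$ with $A_i$ still unmatched, set $A_i':=\Phi_{n-1}(A_i)\in\Phi_{n-1}(\A^\circ)\subset\K$, and look for an unmatched $B_j\in\BB^\circ$ near $A_i'$. Strong shrinkability of $\BB$ over a $\BB$-saturated neighborhood of $A_i'$, combined with density of $\BB^\circ$ in the decomposition space and the vanishing property, supplies such a $B_j$ with $A_i'$ and $B_j$ lying inside a common element of $\U_n$. Topological invariance and the local shift property of $\K$ then yield a homeomorphism $g_n$ supported inside that $\U_n$-element and sending $A_i'$ exactly onto $B_j$; I would use strong local homogeneity of $X$ to adjust $g_n$ so that its support also avoids the finitely many previously matched pairs. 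Setting $\Phi_n:=g_n\circ\Phi_{n-1}$ closes the stage; even stages reverse the roles of $\A$ and $\BB$.

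The limit $\Phi=\lim_n\Phi_n$ exists by completeness of $d$, and the inverses $\Phi_n^{-1}$ are likewise Cauchy by the star-refinement tower, so $\Phi$ is a homeomorphism. The back-and-forth matching realizes a bijection $\A^\circ\to\BB^\circ$ via $\Phi$, and the complementary dense $G_\delta$-subsets $X\setminus\bigcup\A^\circ$ and $X\setminus\bigcup\BB^\circ$ are matched singleton-to-singleton, giving $\Phi(\A)=\BB$. The $\W$-closeness is built in: every $g_n$ moves points inside $\St(A,\U)\cup\St(B,\U)$ for the specific pair $(A,B)$ it identifies, and star-refinement keeps cumulative displacement inside a single $\W$-element. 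The hard part will be the inductive step: pushing $A_i'$ close enough to an unmatched $B_j$ for the local shift property of $\K$ to apply, while controlling the support of $g_n$ to avoid every previously matched pair—this is the delicate bookkeeping, and it is where strong shrinkability of $\BB$ and strong local homogeneity of $X$ must be combined with care.
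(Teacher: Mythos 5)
A preliminary remark: this paper does not actually prove Theorem~\ref{c2.6}; it is imported verbatim from \cite{BR1} (the surrounding text of Section~2 is explicitly a recollection of definitions and results from that preprint), so your proposal can only be judged against the standard argument of that source, whose architecture --- a back-and-forth matching of the non-degeneracy parts, with convergence controlled by a tower of refinements --- your outline does reproduce in broad shape.

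There is, however, a concrete gap at the heart of your inductive step. You propose to apply the local shift property of $\K$ directly to the pair $A_i'=\Phi_{n-1}(A_i)$ and $B_j$, which requires \emph{both} sets to lie in a single small neighborhood $U_x$ furnished by Definition~\ref{d:K-tame}. Density of $\BB^\circ$ in the decomposition space together with the vanishing property does supply arbitrarily small unmatched elements $B_j$ near any prescribed point, but nothing makes $A_i'$ small: a vanishing decomposition only guarantees that the elements failing to refine a given cover form a locally finite subfamily, so individual elements of $\A^\circ$ (and hence their images under $\Phi_{n-1}$) may have large diameter, and then no choice of $B_j$ places the pair inside a common element of $\U_n$. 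This is exactly the point where the strong shrinkability of the decompositions --- which you list as an input but invoke only to ``supply a $B_j$'', a task it does not perform --- is indispensable: before shifting, one must apply a shrinking homeomorphism (Theorem~\ref{t:shrink}), restricted to a saturated open set avoiding the finitely many already-matched elements, to make the current element (and, with bookkeeping, the remaining unmatched ones) small enough for the local shift property to bite; only then is the image still in $\K$ (by topological invariance) and shiftable onto $B_j$. Without this interpolated shrinking step your construction already fails at stage one whenever $A_0$ is large. Two further points need tightening but are standard: the covers $\U_n$ cannot be fixed in advance --- they must be chosen adaptively relative to $\Phi_{n-1}$ for both $\Phi_n$ and $\Phi_n^{-1}$ to converge to a homeomorphism --- and the $\W$-closeness requires verifying that the displacement produced by the shrinking moves, not only by the shifts, stays inside $\St(A,\U)\cup\St(B,\U)$ for the relevant pair.
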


\section{Some properties of Menger manifolds}\label{s3}

In this section we establish some properties of Menger manifolds. Basic information on the Theory of Menger manifolds can be found in \cite[Ch.4]{Chig}. We start by recalling the necessary definitions.

Let $n\ge 0$ be any non-negative integer.
A subset $A$ of a topological space $X$ is called a {\em $UV^{n-1}$-set} for $n\in\w$ if for any open neighborhood $U$ of $A$ in $X$ there exists a neighborhood $V\subset X$ of $A$ such that every map $f:[0,1]^k\setminus(0,1)^k \to V$ from the boundary of the $k$-dimensional cube $[0,1]^k$, $k\le n$, has a continuous extension $\bar f:[0,1]^k\to U$.

A topological space $X$ is called
\begin{itemize}
\item a {\em $C^{n-1}$-space} if $X$ is a $UV^{n-1}$-set in $X$;
\item an {\em $LC^{n-1}$-space} if each singleton $\{x\}\subset X$ is a $UV^{n-1}$-set in $X$;
\item a {\em $UV^n$-compactum} if $X$ is homeomorphic to a closed $UV^n$-subset of the Hilbert cube.
\end{itemize}

It is clear that a topological space $X$ is a $C^{n-1}$-space if and only if all homotopy groups $\pi_k(X,x_0)$, $x_0\in X$, $k<n$, are trivial.

The following characterization of absolute (neighborhood) extensors in dimension $n$ is well-known and can be found in \cite[2.1.12]{Chig}.

\begin{proposition}\label{LCn} A metrizable topological space $X$ is an $\ANE[n]$-space (resp. an $\AEE[n]$-space) if and only if $X$ is an $LC^{n-1}$-space (resp. an $LC^{n-1}$-space and a $C^{n-1}$-space).
\end{proposition}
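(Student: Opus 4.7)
The plan is to prove both equivalences by separating each into its routine ``forward'' direction and its substantive ``reverse'' direction. The forward implications (ANE$[n]\Rightarrow LC^{n-1}$, and AE$[n]\Rightarrow LC^{n-1}\wedge C^{n-1}$) amount to feeding suitable low-dimensional pairs into the extension hypothesis. The reverse implications are a Kuratowski--Dugundji-style construction of an extension via the nerve of a controlled open cover.

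For ($\Rightarrow$), to verify $LC^{n-1}$ at a point $x\in X$, fix a neighborhood $U$ of $x$ and assume towards a contradiction that no $V\subset U$ works: for some $k\le n$ there exist maps $f_j:\partial\II^k\to X$ whose images shrink to $\{x\}$ and which do not extend to $\II^k\to U$. Form the compact metrizable space $Z=\II^k\times(\{1/j:j\in\IN\}\cup\{0\})$ of dimension $k\le n$, and its closed subspace $C=\partial\II^k\times(\{1/j:j\in\IN\}\cup\{0\})$. Define $F:C\to X$ by $F(y,1/j)=f_j(y)$ and $F(y,0)=x$; continuity follows from the shrinking of the images of $f_j$. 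By ANE$[n]$, $F$ extends to some open neighborhood of $C$ in $Z$, which must contain $\II^k\times\{1/j\}$ for all sufficiently large $j$, contradicting the non-extendability of $f_j$. The $C^{n-1}$ conclusion under AE$[n]$ is obtained by the same scheme with $\{x\}$ replaced by $X$ itself.

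For ($\Leftarrow$), suppose $X$ is metrizable and $LC^{n-1}$, and let $f:B\to X$ be continuous on a closed subset $B$ of a metrizable $A$ with $\dim A\le n$. By the Dugundji construction, choose a locally finite open cover $\mathcal W$ of $A\setminus B$ whose mesh tends to $0$ near $B$; let $K$ be its nerve (a simplicial complex of dimension $\le n$ since $\dim A\le n$) and let $\sigma:A\setminus B\to|K|$ be the canonical barycentric map. For each vertex $v$ of $K$ corresponding to $W_v\in\mathcal W$, choose an anchor point $b_v\in B$ close to $W_v$ and set $g(v)=f(b_v)\in X$. Extend $g$ inductively over the $k$-skeleta for $k=1,\dots,n$: at each step, the image of $\partial\Delta$ for a $k$-simplex $\Delta$ sits in a prescribed small neighborhood in $X$, and $LC^{n-1}$ (applied to a carefully chosen chain of neighborhoods of some nearby point of $f(B)$) extends it across $\Delta$ into a controlled larger neighborhood. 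The composition $g\circ\sigma$ then extends $f$ continuously to a neighborhood of $B$ in $A$; in the AE$[n]$ case the global $C^{n-1}$ hypothesis removes the locality constraint on the $0$-skeleton choice and yields an extension to all of $A$.

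The main obstacle is the metric bookkeeping in the reverse direction: one must coordinate the cover $\mathcal W$, the anchors $b_v$, and the nested $LC^{n-1}$ neighborhoods produced at each skeletal step so that (i) the inductive extension succeeds uniformly up to the $n$-skeleton with uniformly controlled images, and (ii) the resulting map on $A\setminus B$ glues continuously with $f$ along $B$. This is precisely the standard Kuratowski--Dugundji scheme carried out in full in \cite[2.1.12]{Chig}, which is why the paper contents itself with citing the reference rather than reproducing the proof.
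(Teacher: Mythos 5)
First, note that the paper itself gives no proof of this proposition: it is stated as a known characterization with a pointer to \cite[2.1.12]{Chig}. So the only question is whether your reconstruction of the classical Kuratowski--Dugundji argument is sound. Your overall architecture is the right one, and your reverse direction, while only an outline, correctly identifies the nerve construction and the places where the metric bookkeeping actually matters.

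The forward direction, however, has a genuine gap as written. You put $Z=\II^k\times(\{1/j:j\in\IN\}\cup\{0\})$ and $C=\partial\II^k\times(\{1/j:j\in\IN\}\cup\{0\})$, and then assert that every open neighborhood of $C$ in $Z$ must contain $\II^k\times\{1/j\}$ for all large $j$. This is false: measuring distances in $\II^k\times[0,1]\subset\mathbb R^{k+1}$, every point of $C$ has its first coordinate block on $\partial\II^k$, so the center of each slice satisfies $\dist\bigl((\tfrac12,\dots,\tfrac12,\tfrac1j),C\bigr)\ge\tfrac12$, and hence $\{z\in Z:\dist(z,C)<\tfrac12\}$ is an open neighborhood of $C$ containing no slice $\II^k\times\{1/j\}$ in its entirety. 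Consequently the $\mathrm{ANE}[n]$ extension of $F$ need not be defined on any whole cube $\II^k\times\{1/j\}$, and no contradiction with the non-extendability of the $f_j$ is obtained. The repair is standard: enlarge the closed set to $C'=C\cup(\II^k\times\{0\})$ with $F\equiv x$ on the whole bottom slice (still continuous, since the images of the $f_j$ shrink to $x$); then any neighborhood of the compact set $\II^k\times\{0\}$ in $Z$ does contain $\II^k\times\{1/j\}$ for all large $j$, and continuity of the extension along $\II^k\times\{0\}$ forces the extended images into the prescribed neighborhood $U$ of $x$ for large $j$, which yields the contradiction. (Separately, the $C^{n-1}$ claim under $\mathrm{AE}[n]$ needs no limiting argument at all: $\partial\II^k$ is closed in the at most $n$-dimensional space $\II^k$, so any map $\partial\II^k\to X$ extends over $\II^k$ directly.)
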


We shall need the following known Extension Property of $\ANE[n]$-spaces, which can be found in  \cite[4.1.7]{Chig}.

\begin{proposition}\label{n-homotopy} For any open cover $\U$ of a metrizable $ANE[n]$-space $X$ there is an open cover $\V$ of $X$ such that for any closed subset $B$ of a metrizable space $A$ of dimension $\dim(A)\le n$ and any maps $f:A\to X$ and $g:B\to X$ with $(g,f|B)\prec\V$ there is a map $\bar g:A\to X$ such that $\bar g|B=g$ and $(\bar g,f)\prec\U$.
\end{proposition}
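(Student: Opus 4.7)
The plan is to derive this controlled extension property from the $LC^{n-1}$-characterization of $\mathrm{ANE}[n]$-spaces given by Proposition~\ref{LCn}, combined with the standard skeleton-by-skeleton construction using the nerve of a fine cover of $A$.

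Given $\U$, I would first pass to an open cover $\U_0$ of $X$ which is a star-refinement of $\U$. Using that $X$ is $LC^{n-1}$, one constructs inductively a finite descending chain $\U_0\succ\U_1\succ\cdots\succ\U_n$ of open covers of $X$ such that for every $k\in\{1,\dots,n\}$ and every $U\in\U_k$ there is a $U'\in\U_{k-1}$ with $U\subset U'$ having the property that each map $\phi\colon\partial\II^k\to U$ admits a continuous extension $\bar\phi\colon\II^k\to U'$. Such a chain exists because the $UV^{k-1}$-property is local at each point; a locally finite refinement of the pointwise neighborhoods, guaranteed by paracompactness of the metrizable space $X$, gives the desired cover. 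I would then set $\V=\U_n$.

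Next, given $A$, $B$, $f$, $g$ as in the hypothesis, I would choose a locally finite open cover $\W$ of $A\setminus B$ of order $\le n+1$ (using $\dim A\le n$) such that for each $W\in\W$ the image $f(\overline{W})$ lies in some $V_W\in\V$, and, whenever $\overline{W}$ meets $B$ at a point $b$, the same $V_W$ contains $g(b)$ as well (which is possible because $(g,f|B)\prec\V$). Let $N$ be the nerve of $\W$ and $\kappa\colon A\setminus B\to |N|$ a canonical map built from a partition of unity subordinated to $\W$. I would define $\bar g$ by $\bar g|B=g$ and $\bar g|(A\setminus B)=\gamma\circ\kappa$, where $\gamma\colon|N|\to X$ is built inductively on skeleta: each vertex corresponding to $W\in\W$ is sent to a point of $V_W$, and each $k$-simplex, $1\le k\le n$, is filled in using the extension property of the chain so that its image lies in some set of $\U_{k-1}$. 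After $n$ steps one obtains $(\bar g,f)\prec\U_0$, whence $(\bar g,f)\prec\U$ by the choice of $\U_0$.

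The delicate point, which I expect to be the main obstacle, is arranging that $\bar g$ agrees with $g$ on $B$ on the nose rather than merely up to a $\U$-small homotopy. This forces an additional requirement on the cover $\W$: cells touching $B$ must be small enough, and their nerve-vertices must be sent to values of $g$ at chosen points of $\overline{W}\cap B$, while $\kappa$ must be extended across $B$ in a continuous way that produces the required identification $\bar g|B=g$. These ``rel $B$'' bookkeeping details are precisely those carried out in \cite[4.1.7]{Chig}, from which the statement can alternatively be quoted verbatim.
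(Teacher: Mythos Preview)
The paper does not prove this proposition at all: it is stated as a known result and referenced to \cite[4.1.7]{Chig}, with no argument given. Your sketch is the standard $LC^{n-1}$/nerve-filling argument behind that reference, and you yourself note at the end that the statement can be quoted verbatim from \cite[4.1.7]{Chig}; so your proposal is consistent with---indeed more detailed than---what the paper does.

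One minor remark on the sketch itself: in the last paragraph you correctly flag the genuine issue, namely that the nerve map $\gamma\circ\kappa$ as defined on $A\setminus B$ need not glue continuously to $g$ on $B$. The usual fix is not quite to ``extend $\kappa$ across $B$'' but rather to work with a cover of $A$ (not just $A\setminus B$) whose members meeting $B$ are already assigned to $g$-values, and to interpolate between $g$ and the nerve map on a collar of $B$ using a Urysohn function; equivalently, one builds the extension relative to $B$ from the start by treating $B$ as part of the $0$-skeleton. Either way this is exactly the bookkeeping carried out in the cited source, so your deferral to \cite[4.1.7]{Chig} is appropriate.
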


A map $f:X\to Y$ between metrizable spaces is called a {\em $UV^{n-1}$-map} if for each $y\in Y$ the preimage $f^{-1}(y)$ is a $UV^{n-1}$-compactum. A surjective $UV^{n-1}$-map will be called {\em $UV^{n-1}$-surjection}. It is well-known that the Menger cube $\mu^n$ is an $UV^{n-1}$-compactum (see, e.g. Lemma 5.5 of \cite{Armen}).

The following important result can be found in \cite[4.1.20]{Chig}.

\begin{proposition}\label{near} Proper $UV^{n-1}$-surjections between $\mu^n$-manifolds are near homeomorphisms.
\end{proposition}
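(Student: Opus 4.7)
The plan is to reduce the statement to the Shrinkability Criterion (Theorem~\ref{t:shrink}) applied to the fiber decomposition of $f$. Given a proper $UV^{n-1}$-surjection $f:X\to Y$ between $\mu^n$-manifolds, the decomposition $\DD_f=\{f^{-1}(y):y\in Y\}$ of $X$ is upper semicontinuous (since $f$ is perfect), its decomposition space is homeomorphic to $Y$, and the quotient map $q_{\DD_f}:X\to X/\DD_f$ is identified with $f$. Hence it suffices to prove that $\DD_f$ is shrinkable in the sense of Section~2: then Theorem~\ref{t:shrink} yields, for each open cover $\U$ of $Y$, a homeomorphism $h:X\to Y$ with $(h,f)\prec\U$.

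The heart of the argument is therefore a Bing-type shrinking construction. Fix an open cover $\U$ of $Y$; we may assume $\U$ is a normal cover satisfying the Extension Property of Proposition~\ref{n-homotopy} for both $X$ and $Y$ (they are $\ANE[n]$-spaces by Proposition~\ref{LCn} and Bestvina's Theorem~\ref{best2}). The plan is to construct inductively a sequence of proper $UV^{n-1}$-surjections $f_k:X\to Y$ with $f_0=f$, each $f_k$ being $\U_k$-close to $f_{k-1}$ for suitably quickly-shrinking refinements $\U_k$ of $\U$, and such that the fibers of $f_k$ have diameters witnessed to be $\U_k$-small on a $k$-th basic open cover of $Y$. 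At each step one uses: (a) the disjoint $n$-cells property of $X$ to slide the (at most $n$-dimensional) $UV^{n-1}$-fibers of $f_{k-1}$ off themselves inside pre-assigned $\U_k$-small neighborhoods; (b) the $UV^{n-1}$-property of the fibers, which permits small null-homotopies into any neighborhood and hence allows one to realize those slides by maps; (c) Proposition~\ref{n-homotopy} applied to $X$ to upgrade these small homotopies to an ambient $\U_k$-small self-homotopy of $X$; and (d) a standard controlled modification (in the spirit of \cite[\S4.1]{Chig}) to replace $f_{k-1}$ by $f_k=f_{k-1}\circ h_k^{-1}$ for an appropriate ambient homeomorphism $h_k$. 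Because each step is $\U_k$-small and the $\U_k$ refine each other sufficiently fast, the sequence $(f_k)$ converges to a limit $\hat f:X\to Y$ which is $\U$-close to $f$.

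The final step is to verify that the limit $\hat f$ is a homeomorphism. Surjectivity is automatic from properness and the $\U$-closeness by an elementary compactness argument on $Y$. For injectivity one checks, using the uniform diameter control from step $k$, that every preimage $\hat f^{-1}(y)$ is a single point; this is the standard Bing shrinking conclusion once one has arranged that for every $y\in Y$ and every $\e>0$, eventually $f_k^{-1}(\St(y,\U_k))$ has diameter less than $\e$. Together with surjectivity, a perfect continuous bijection between locally compact metrizable spaces is a homeomorphism, completing the proof.

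The main obstacle is step (a)--(c) above, i.e., producing at each stage an ambient $\U_k$-small self-homeomorphism of $X$ that genuinely shrinks the fibers of $f_{k-1}$. This is where the combination of the $UV^{n-1}$-hypothesis on the fibers with the disjoint $n$-cells property and the $\ANE[n]$-property of $\mu^n$-manifolds is essential; controlling that the modification remains a $UV^{n-1}$-surjection (so that the induction can continue) is the delicate technical point, and is the reason the proposition sits at the depth of Chigogidze's monograph \cite[4.1.20]{Chig} rather than being derivable from the formal machinery of Section~2 alone.
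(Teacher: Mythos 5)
The paper offers no internal proof of this proposition: it is imported verbatim as Theorem 4.1.20 of Chigogidze's monograph \cite{Chig}, so you are in effect attempting to reprove a deep external theorem of Bestvina--Chigogidze rather than reconstruct an argument the authors actually give. Your general strategy (pass to the fiber decomposition $\{f^{-1}(y):y\in Y\}$ and run a Bing shrinking argument) is the right shape, and the bookkeeping in your first and third paragraphs (upper semicontinuity from properness, convergence of the $f_k$, injectivity of the limit from diameter control, perfect continuous bijection $\Rightarrow$ homeomorphism) is standard and correct. But the entire mathematical content of the proposition is concentrated in your step (a)--(c), and there the argument has a genuine gap. The disjoint $n$-cells property concerns pushing \emph{images of maps of $\II^n$} off one another; it gives no mechanism for shrinking a given $UV^{n-1}$-compactum to small size by an ambient homeomorphism, and the fibers of $f$ are not images of cells. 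Likewise Proposition~\ref{n-homotopy} extends maps within control but does not produce ambient homeomorphisms. Note also that the fiber decomposition of a general proper $UV^{n-1}$-surjection has uncountably many nondegenerate elements and is in no sense vanishing, so none of the shrinking machinery actually developed in this paper (Propositions~\ref{p3.9}, \ref{shrink}, \ref{strong-shrink}, all of which require \emph{vanishing} decompositions into $Z$-sets or tame closed balls) can be borrowed. The genuine proof requires the full resolution and $n$-homotopy classification theory for Menger manifolds together with controlled $Z$-set unknotting, which is exactly why the authors cite \cite[4.1.20]{Chig} instead of arguing from the formal apparatus of Section~2.

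Your closing paragraph in fact concedes the point: you identify the fiber-shrinking step as ``the delicate technical point'' and defer it to ``a standard controlled modification in the spirit of \cite[\S4.1]{Chig}.'' That deferral is the whole theorem. As written, the proposal is an accurate road map of how such a proof is organized, but not a proof; the acceptable alternatives here are either to cite \cite[4.1.20]{Chig} as the paper does, or to supply the shrinking of an individual $UV^{n-1}$-fiber with control, which is a substantial argument in its own right.
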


Another useful property of the Menger cubes is the $Z$-Set Unknotting Theorem \cite[4.1.6]{Chig}:

\begin{theorem}\label{Z-unknot} Any homeomorphism $h:A\to B$ between two $Z$-sets of the Menger cube $\mu^n$ can be extended to a homeomorphism $\bar h:\mu^n\to\mu^n$.
\end{theorem}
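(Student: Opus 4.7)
The plan is to reduce the statement to Proposition~\ref{near}, which asserts that proper $UV^{n-1}$-surjections between $\mu^n$-manifolds are near homeomorphisms. Since $\mu^n$ is both $LC^{n-1}$ and $C^{n-1}$ (being a $UV^{n-1}$-compactum), Proposition~\ref{LCn} identifies $\mu^n$ as an absolute extensor in dimension $n$. Combined with $\dim\mu^n=n$, this guarantees that the composition $i_B\circ h:A\to\mu^n$, where $i_B:B\hookrightarrow\mu^n$ is the inclusion, extends to a continuous map $H:\mu^n\to\mu^n$.

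The first substantive step is to massage $H$, off the $Z$-set $A$, into a proper $UV^{n-1}$-surjection $\tilde h:\mu^n\to\mu^n$ with $\tilde h|A=h$. Surjectivity is arranged via the $n$-homotopy extension property (Proposition~\ref{n-homotopy}), which permits enlarging the image of $H$ to cover $\mu^n$ while keeping the behaviour near $A$ fixed. The $UV^{n-1}$ condition on fibers is arranged fiber-by-fiber, exploiting that $\mu^n$ itself is a $UV^{n-1}$-compactum and using the disjoint $n$-cells property to supply the maneuvering room needed to replace fibers by $UV^{n-1}$-compacta without disturbing the values of $\tilde h$ on $A$. Once $\tilde h$ is in place, Proposition~\ref{near} yields, for any open cover $\U$ of $\mu^n$, a homeomorphism $h_\U:\mu^n\to\mu^n$ that is $\U$-near to $\tilde h$; in particular $h_\U|A$ is $\U$-near to $h$.

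To promote this near-equality to the exact equality $\bar h|A=h$, I would run a standard back-and-forth construction, choosing open covers $\U_1,\U_2,\ldots$ of $\mu^n$ with each $\U_{k+1}$ star-refining $\U_k$ rapidly enough that Cauchy sequences of homeomorphisms satisfying $(h_{k+1},h_k)\prec\U_k$ must converge to a homeomorphism. Inductively, each $h_k$ is obtained from $h_{k-1}$ by first invoking Proposition~\ref{near} to approximate $\tilde h$ and then correcting on a small neighborhood of $A$ so that the restriction to $A$ equals $h$ exactly. The main obstacle is precisely this correction step: one must show that a homeomorphism whose restriction to $A$ is very close to $h$ can be modified by a homeomorphism of $\mu^n$ supported in an arbitrarily small neighborhood of $A$ so as to match $h$ on $A$ exactly. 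This relies on the $Z$-set structure of both $A$ and $B$, which supplies locally shrinkable neighborhood systems allowing isotopy-type adjustments without destroying the global near-homeomorphism estimates. The limit $\bar h=\lim_k h_k$ is then a homeomorphism of $\mu^n$ extending $h$.
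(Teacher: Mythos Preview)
The paper does not prove this theorem at all; it is quoted from the literature (Chigogidze, \emph{Inverse Spectra}, 4.1.6) as a known structural fact about $\mu^n$, and is then used as a black box in Lemma~\ref{l3.5} and Lemma~\ref{l:K-tame}. So there is no proof in the paper to compare against.

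As for the proposal itself, there is a genuine gap, and it is precisely the step you flag as ``the main obstacle''. Your correction step asks for the following: given a homeomorphism $g:\mu^n\to\mu^n$ with $g|A$ close to $h$, find a homeomorphism of $\mu^n$ supported in a small neighborhood of $A$ carrying $g|A$ to $h$. But this is exactly the controlled form of the $Z$-set Unknotting Theorem (that two $Z$-embeddings which are close are ambiently isotopic by a small homeomorphism), which is at least as strong as the statement you are trying to prove. So the argument is circular. The earlier step is also under-specified: turning an arbitrary continuous extension $H$ into a $UV^{n-1}$-surjection ``fiber-by-fiber'' is not a maneuver that the disjoint $n$-cells property or Proposition~\ref{n-homotopy} supplies; one does not have control over the homeomorphism type of preimages under small perturbations of a map. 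The actual proofs (Bestvina; Chigogidze) do not go through Proposition~\ref{near} at all, but build the extending homeomorphism directly by an inductive back-and-forth using $n$-homotopy extension and the fact that $Z$-embeddings close in the $C^0$ sense are $n$-homotopic rel nothing, together with a convergence scheme in which each stage already agrees with $h$ on $A$.
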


\begin{lemma}\label{l3.5} For each tame open ball $B$ in a $\mu^n$-manifold $M$ there is a $UV^{n-1}$-retraction $r:\bar B\to\partial B$.
\end{lemma}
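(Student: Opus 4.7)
The plan is to build the retraction $r$ in two stages: first produce some continuous retraction $r_0:\bar B\to\partial B$, then promote it to one with $UV^{n-1}$ fibers.

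For the first stage, I would invoke Theorem~\ref{best1} together with Proposition~\ref{LCn} to observe that $\partial B\cong\mu^n$ is an $\AEE[n]$-space. Since $\bar B\cong\mu^n$ has covering dimension $n$ and $\partial B$ is closed in $\bar B$, the defining property of $\AEE[n]$-spaces extends the identity map $\id_{\partial B}$ to a continuous map $r_0:\bar B\to\partial B$; this $r_0$ is automatically a retraction.

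For the second stage, the plan is to exploit the inverse-limit representation of Menger cubes. Writing $\bar B=\varprojlim(P_k,p_k)$ and $\partial B=\varprojlim(Q_k,q_k)$ as inverse limits of compact $n$-dimensional polyhedra with $UV^{n-1}$-bonding maps (standard in Menger manifold theory, see \cite[Ch.~4]{Chig}), I would use the $Z$-Set Unknotting Theorem~\ref{Z-unknot} to reposition the $Z$-set $\partial B$ inside $\bar B\cong\mu^n$ so that each $Q_k$ sits inside $P_k$ as a subpolyhedron. Taking an iterated regular neighborhood of $Q_k$ in $P_k$ and collapsing simplex by simplex produces a simplicial retraction $\rho_k:P_k\to Q_k$ each of whose point preimages is a cone-like polyhedral compactum, hence a $UV^{n-1}$-compactum. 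Using Proposition~\ref{n-homotopy} (the $n$-homotopy extension property of $\ANE[n]$-spaces) together with the disjoint $n$-cells property of $\mu^n$, one can telescopically adjust the $\rho_k$ at each level so that they commute strictly with the bonding maps, with adjustments $\U_k$-close to the identity for a sequence of covers $\U_k$ with $\mathrm{mesh}(\U_k)\to 0$. The inverse limit $r=\varprojlim\rho_k:\bar B\to\partial B$ then restricts to $\id$ on $\partial B$ (because each $\rho_k|_{Q_k}=\id$), and each fiber $r^{-1}(y)=\varprojlim\rho_k^{-1}(q_k(y))$ is an inverse limit of $UV^{n-1}$-compacta, hence itself $UV^{n-1}$.

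The main obstacle will be the telescoping step: going from polyhedral retractions $\rho_k$ that commute only approximately with the bonding maps to ones that commute strictly, while simultaneously preserving both the retraction property on $Q_k$ and the $UV^{n-1}$ character of the individual fibers $\rho_k^{-1}(*)$. Here Proposition~\ref{n-homotopy} is applied to the inclusion $Q_k\hookrightarrow P_k$ and the two slightly different candidate maps $\rho_k$ and $q_k\circ\rho_{k+1}\circ p_k^{-1}$, producing an $n$-homotopy between them that fixes $Q_k$; the disjoint $n$-cells property is then used to realize this homotopy by an ambient isotopy supported off $Q_k$, absorbing the error at stage $k$ so that exact compatibility holds in the limit.
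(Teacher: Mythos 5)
Your first stage is fine: since $\partial B\cong\mu^n$ is an absolute extensor in dimension $n$ (Theorem~\ref{best1} and Proposition~\ref{LCn}) and $\dim\bar B=n$, the identity of $\partial B$ extends to a retraction $r_0:\bar B\to\partial B$. But $r_0$ plays no role afterwards, and the second stage, which is where the $UV^{n-1}$ condition on the fibers actually has to be produced, contains a step that fails. An iterated regular neighborhood of $Q_k$ in $P_k$ collapses onto $Q_k$, but that only retracts the \emph{neighborhood} onto $Q_k$; it does not produce a retraction (let alone a collapse) of all of $P_k$ onto $Q_k$. A polyhedral retraction $\rho_k:P_k\to Q_k$ would force $\pi_n(Q_k)\to\pi_n(P_k)$ to be a split injection, and nothing in the construction guarantees this for the finite-stage inclusions: the polyhedra approximating $\mu^n$ are $n$-dimensional with highly nontrivial $n$-th homotopy, and the fact that the limit inclusion $\partial B\hookrightarrow\bar B$ admits a retraction is a ``$\dim\le n$ versus $\mathrm{AE}[n]$'' phenomenon that does not descend to the finite polyhedral stages. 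The telescoping step you yourself flag as the main obstacle is also left essentially unaddressed, and note that once you adjust the $\rho_k$ by near-identity corrections to force strict commutativity with the bonding maps, you lose the explicit collapsible description of their point-preimages, so the $UV^{n-1}$ property of the limit fibers would have to be re-established from scratch.

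The paper sidesteps all of this with a single citation: by Theorem 4.3.5 of \cite{Chig} there is a $UV^{n-1}$-surjection $\pi:\bar B\to\mu^n$ admitting a section $s:\mu^n\to\bar B$ with $\pi\circ s=\id$ whose image $s(\mu^n)$ is a $Z$-set in $\bar B$. Then $s\circ\pi$ is already a $UV^{n-1}$-retraction of $\bar B$ onto a copy of $\mu^n$ sitting as a $Z$-set, and conjugating by a homeomorphism of pairs $h:(\bar B,\partial B)\to(\bar B,s(\mu^n))$, supplied by the $Z$-Set Unknotting Theorem~\ref{Z-unknot} since $\partial B$ and $s(\mu^n)$ are both $Z$-sets homeomorphic to $\mu^n$, yields the desired retraction $r=h^{-1}\circ s\circ\pi\circ h$ onto $\partial B$. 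That quoted theorem is precisely the packaged form of the inverse-limit work you are trying to redo by hand; to repair your argument you should locate and invoke such a statement rather than reconstruct it level by level.
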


\begin{proof} Since the tame closed ball $\bar B$ is homeomorphic to the Menger cube $\mu^n$, we can apply Theorem 4.3.5 of \cite{Chig} and find a $UV^{n-1}$-surjection $\pi:\bar B\to\mu^n$ and a continuous map $s:\mu^n\to \bar B$ such that $\pi\circ s$ is the identity map of $\mu^n$ and $s(\mu^n)$ is a $Z$-set in $\bar B$. It follows that $s$ is an embedding of $\mu^n$ in $\bar B$ and hence $s(\mu^n)$ is a $Z$-set of $\bar B$, homeomorphic to $\mu^n$. Since the boundary $\partial B$ of $B$ in $M$ also is a $Z$-set in $\bar B$, homeomorphic to $\mu^n$, we can apply the $Z$-Set Unknotting Theorem~\ref{Z-unknot} and find a homeomorphism of pairs $h:(\bar B,\partial B)\to (\bar B,s(\mu^n))$. Then the map $r=h^{-1}\circ s\circ \pi\circ h:\bar B\to\partial B$ is a $UV^{n-1}$-surjection, being the composition of the $UV^{n-1}$-surjection $\pi$ and the homeomorphisms $h$, $h^{-1}\circ s$. To see that $r$ is a retraction of $\bar B$ onto $\partial B$, observe that for every $x\in \partial B$ the point $h(x)$ belongs to $s(\mu^n)$ and hence $h(x)=s(y)$ for some $y\in\mu^n$. Since $y=\pi\circ s(y)=\pi\circ h(x)$, we conclude that $h(x)=s\circ \pi\circ h(x)$ and hence
$$r(x)=h^{-1}\circ s\circ \pi\circ h(x)=h^{-1}\circ h(x)=x,$$
so $r$ is a retraction of $\bar B$ onto its boundary $\partial B$.
\end{proof}

\begin{proposition}\label{p3.6} For any tame open set $U$ in an $\mu^n$-manifold $M$ the complement $M\setminus U$ is an $\mu^n$-manifold, homeomorphic to $M$.
\end{proposition}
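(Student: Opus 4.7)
The plan is to build a continuous surjection $r:M\to M\setminus U$ that collapses each tame ball $\bar B$ onto its boundary $\partial B$ via the $UV^{n-1}$-retraction from Lemma~\ref{l3.5}, verify that $r$ is a proper $UV^{n-1}$-surjection, establish that $M\setminus U$ is a $\mu^n$-manifold, and then invoke Proposition~\ref{near} to conclude that $r$ is a near homeomorphism. Using Proposition~\ref{p2}, write $U=\bigcup\mathcal U$ for a tame family $\mathcal U$ of tame open balls with pairwise disjoint closures, and fix retractions $r_B:\bar B\to\partial B$ for $B\in\mathcal U$ from Lemma~\ref{l3.5}. Define
$$r(x)=\begin{cases}r_B(x),&x\in\bar B\text{ for some }B\in\mathcal U,\\ x,&\text{otherwise.}\end{cases}$$
This is well defined since the closures $\bar B$ are pairwise disjoint. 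Continuity and properness of $r$ follow from the vanishing of $\mathcal U$: at every point only finitely many $\bar B$ have diameter exceeding a prescribed $\varepsilon$, so $r$ differs from the identity by more than $\varepsilon$ only on a finite union of controlled pieces, and $r^{-1}(K)\subset K\cup\bigcup\{\bar B:\bar B\cap K\ne\emptyset\}$ is compact for compact $K\subset M\setminus U$. Its fibers are either singletons or the $UV^{n-1}$-compacta $r_B^{-1}(y)$, so $r$ is a proper $UV^{n-1}$-surjection onto $M\setminus U$.

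Next I verify that $M\setminus U$ is a $\mu^n$-manifold by checking the three conditions of Bestvina's characterization (Theorem~\ref{best2}). Local compactness and metrizability are inherited from $M$; the dimension equals $n$, being at most $n$ as a subspace of $M$ and at least $n$ because $M\setminus U$ contains some $\partial B\cong\mu^n$ (the case $\mathcal U=\emptyset$ being trivial). For the $\ANE[n]$-property I verify the $LC^{n-1}$-condition pointwise by Proposition~\ref{LCn}: at a point of the open subset $M\setminus\bar U$ this is automatic, while at a point $x\in\partial B$ I combine the $LC^{n-1}$-property of $\partial B\cong\mu^n$ with the $Z$-set condition of $\partial B$ in $M\setminus B$ and the extension property (Proposition~\ref{n-homotopy}); vanishing of $\mathcal U$ ensures that only finitely many other boundaries $\partial B'$ enter a small neighborhood of $x$. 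For the disjoint $n$-cells property, given $f,g:\II^n\to M\setminus U$ and an open cover $\V$ of $M\setminus U$, I first push $f$ and $g$ off the countable union $\bigcup_{B\in\mathcal U}\partial B$ into the open $\mu^n$-submanifold $M\setminus\bar U$ by a countable iterated application of the $Z$-set condition for each $\partial B$ in $M\setminus B$ (using vanishing to make the successive perturbations summable and $\V$-controlled), and then apply the disjoint $n$-cells property inside $M\setminus\bar U$, which it enjoys as an open subset of $M$.

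Having established that $r:M\to M\setminus U$ is a proper $UV^{n-1}$-surjection between $\mu^n$-manifolds, Proposition~\ref{near} yields that $r$ is a near homeomorphism and hence $M\setminus U$ is homeomorphic to $M$. The main obstacle is the second step: verifying the $\ANE[n]$-property and the disjoint $n$-cells property for $M\setminus U$ is delicate because it requires a coherent interplay between the $Z$-set property of each $\partial B$ (which supplies the local pushing moves) and the vanishing of the family $\mathcal U$ (which supplies the uniform control needed to glue infinitely many such moves into a continuous global perturbation).
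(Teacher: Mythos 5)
Your construction of the retraction $r$ and the final appeal to Proposition~\ref{near} match the paper, but the middle step --- verifying that $M\setminus U$ satisfies Bestvina's characterization --- has a genuine gap in the disjoint $n$-cells argument. You propose to push $f$ and $g$ off $\bigcup_{B}\partial B$ ``into the open $\mu^n$-submanifold $M\setminus\bar U$'' and then apply the disjoint cells property there. This fails in exactly the case that matters: the applications (Theorem~\ref{tameopen} and the construction of $\Z_0$-universal sets) concern \emph{dense} tame open sets $U$, for which $\bar U=M$ and $M\setminus\bar U=\emptyset$. Replacing $M\setminus\bar U$ by $(M\setminus U)\setminus\bigcup_{B}\partial B=M\setminus\bigcup_{B}\bar B$ does not rescue the plan: that set is not open in $M$, so it is not an open $\mu^n$-submanifold whose disjoint cells property you can quote; and the pushing moves you invoke are not actually available, because the definition of a tame ball only makes $\partial B$ a $Z$-set in $\bar B$ and in $M\setminus B$, so the $Z$-set condition pushes an $n$-cell into $M\setminus\bar B$ --- a set which contains all the \emph{other} balls of the family and hence meets $U$. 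It does not keep the image inside $M\setminus U$. The fact that each $\partial B$ is a $Z$-set (or even a $Z_n$-set) in $M\setminus U$ itself is nontrivial; the paper proves it only later (in Proposition~\ref{shrink}) and, tellingly, does so \emph{using} the retraction $r$ and the present proposition.

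The paper's route through this step avoids the problem: it approximates the given maps by maps $g_0,g_1$ with disjoint images in the ambient manifold $M$ (where the disjoint $n$-cells property is available), fixes $\delta\le\dist\bigl(g_0(\II^n),g_1(\II^n)\bigr)$, observes that only the discrete subfamily of balls of diameter at least $\delta/5$ can cause trouble, modifies the maps near those finitely many relevant balls so that over each such $\bar B$ they land injectively in $\partial B\cong\mu^n$ (via the extension property of Proposition~\ref{n-homotopy} and approximation by injective maps into $\mu^n$), and finally composes with $r$; the composition lands in $M\setminus U$ automatically and moves points by less than $\delta/5$, so disjointness of the two images survives. You should also note that your pointwise $LC^{n-1}$ verification of the $\mathrm{ANE}[n]$ property is unnecessary once $r$ is in hand: $M\setminus U$ is a retract of the $\mathrm{ANE}[n]$-space $M$ and is therefore itself an $\mathrm{ANE}[n]$-space.
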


\begin{proof} Write the tame open set $U$ as the union $U=\bigcup\mathcal B$ of a vanishing family of tame open balls with disjoint closures in $M$. By Lemma~\ref{l3.5}, for each tame open ball $B\in\mathcal B$ there exists a $UV^{n-1}$-retraction $r_B:\bar B\to\partial B$. Extend $r_B$ to the  $UV^{n-1}$-retraction $\bar r_B:M\to M\setminus B$ such that $\bar r_B|\bar B=r_B$.

The $UV^{n-1}$-retractions $r_B$, $B\in\mathcal B$, form a $UV^{n-1}$-retraction $r:M\to M\setminus M\setminus U$ defined by
$$
r(x)=\begin{cases}
r_B(x)&\mbox{ if $x\in B\in\mathcal B$}\\
x&\mbox{otherwise}.
\end{cases}
$$

\begin{claim} $M\setminus U$ is a $\mu^n$-manifold.
\end{claim}

\begin{proof} By Theorem~\ref{best2}, the $\mu^n$-manifold $M$ is an $\ANE[n]$-space, and then so is its retract $X\setminus U$. It remains to check that $M\setminus U$ has the disjoint $n$-cells property.
Fix any metric $d$ generating the topology of $M$. Given any $\epsilon>0$ and a  map $f:\II^n\times\{0,1\}\to S$ we need to find a map $\tilde f:\II^\w\times \{0,1\}\to M$ such $\tilde f(\II^n\times\{0\})\cap \tilde f(\II^n\times\{1\})=\emptyset$ and that $d(\tilde f,f)=\sup_{t\in\II^n}d(\tilde f(t),f(y))<\e$.

By the discrete $n$-cells property of the Menger manifold $M$, the map $f:\II^n\times \{0,1\}\to M\setminus U\subset M$ can be approximated by a map $g:\II^n\times \{0,1\}\to M$ such that
$d(g,f)<\frac12\epsilon$ and $g(\II^n\times\{0\})\cap g(\II^n\times\{1\})=\emptyset$. Fix a positive real number $\delta<\epsilon$ such that $$\delta\le \dist\big(g(\II^n\times\{0\}),g(\II^n\times\{1\})\big)=\inf\big\{d(x,y):x\in g(\II^n\times\{0\}),\;\;y\in g(\II^n\times\{1\})\big\}.$$

The vanishing property of the family $\mathcal B$ guarantees that the subfamily $\mathcal B'=\{B\in\BB:\diam(B)\ge\delta/5\}$ is discrete in $M$. By collectivewise normality of $M$, for each set $B\in\BB'$ its closure $\bar B$ has an open neighborhood $O(\bar B)\subset M$ such that the indexed family $\big(O(\bar B)\big)_{B\in \BB'}$ is discrete in $M$.

\begin{claim}\label{cl10.2} For every $B\in\BB'$ there is a map $g_B:\II^n\times\{0,1\}\to M\setminus B$ such that
\begin{enumerate}
\item $d(g_B,\bar r_B\circ g)<\delta/5$;
\item $g_B|g^{-1}(M\setminus O(\bar B))= g|g^{-1}(M\setminus O(\bar B))$;
\item $g_B(g^{-1}(\bar B))\subset\partial B$;
\item $g_B(g^{-1}(O(\bar B)))\subset O(\bar B)$, and
\item $g_B(\II^n\times\{0\})\cap g_B(\II^n\times\{1\})=\emptyset$.
\end{enumerate}
\end{claim}

\begin{proof} By normality of $M$, the closed set $\bar B$ has an open neighborhood $U(\bar B)\subset M$ whose closure $\bar U(\bar B)$ is contained in $O(\bar B)$. Consider the closed subset $F_B=g^{-1}(\bar B)\subset\II^n\times\{0,1\}$, and its open neighborhoods $O(F_B)=g^{-1}(O(\bar B))$ and $U(F_B)=g^{-1}(U(\bar B))$.
It follows from $\bar U(\bar B)\subset O(\bar B)$ that $\bar U(F_B)\subset O(F_B)$.

Next, consider the map $\bar r_B\circ g|O(F_B):O(F_B)\to O(\bar B)\setminus B$. By Lemma~\ref{n-homotopy},
there is an open cover $\U_B$ of $O(\bar B)\setminus B$ such that any map $g':F_B\to O(\bar B)\setminus B$ with $(g',\bar r_B\circ g|F_B)\prec\U_B$ can be extended to a map $g'_B:O(F_B)\to  O(\bar B)\setminus B$ such that $g'_B|O(F_B)\setminus U(F_B)=g|O(F_B)\setminus (F_B)$ and $d(g'_B,g|O(B))<\delta/5$.

Since the boundary $\partial B$ of the tame open ball $B$ in $M$ is homeomorphic to the Menger cube $\mu^n$, by Theorem~4.1.19 \cite{Chig}, the map $\bar r_B\circ g|F_B\to\partial B$ can be approximated by an injective map $g':F_B\to\partial B$ such that $(g',g|F_B)\prec\U_B$. By the choice of the cover $\U_B$ the map $g'$ can be extended to a continuous map $g'_B:O(F_B)\to O(\bar B)\setminus B$ such that $g'_B|O(F_B)\setminus U(F_B)=g|O(F_B)\setminus U(F_B)$ and $d(g'_B,g|O(F_B))<\delta/5$.

Extend the map $g'_B$ to a continuous map $g_B:\II^n\times\{0,1\}\to M\setminus \bigcup\BB'$ such that
$$g_B(x)=\begin{cases}
g'_B(x)&\mbox{if $x\in O(B)$}\\
g(x)&\mbox{otherwise}.
\end{cases}
$$
It is easy to see that the map $g_B$ satisfies the conditions (1)--(5).
\end{proof}

Now define a map $\tilde g:\II^n\times\{0,1\}\to M'$ by the formula
$$\tilde g(x)=\begin{cases}
g_B(x)&\mbox{if $x\in g^{-1}(O(\bar B))$ for some $B\in\BB'$};\\
g(x)&\mbox{otherwise}.
\end{cases}
$$
Claim~\ref{cl10.2} implies that $d(\tilde g,g)<\delta/5$ and $\tilde g(\II^n\times\{0\})\cap \tilde g(\II^n\times\{1\})=\emptyset$.
Finally, put $\tilde f=r\circ \tilde g:\II^n\times\{0,1\}\to M\setminus U$.

The choice of the family $\BB'$ guarantees that $d(\tilde f,\tilde g)<\delta/5$ and hence $d(\tilde f,g)<\frac25\delta$ and $d(\tilde f,f)\le d(\tilde f,g)+d(g,f)<\frac25\delta+\frac12\epsilon<\epsilon$. The choice of $\delta\le\dist\big(g(\II^n{\times}\{0\}),g(\II^n{\times}\{0\})\big)$ guarantees that
$$\dist\big(\tilde f(\II^n{\times}\{0\}),\tilde f(\II^n{\times}\{0\})\big)\ge\delta-2d(\tilde f,g)\ge\frac15\delta>0$$and thus $\tilde f(\II^n{\times}\{0\})\cap\tilde f(\II^n{\times}\{1\})=\emptyset$. By Characterization Theorem~\ref{best2}, the space $M\setminus U$ is an $\mu^n$-manifold.
\end{proof}

By Proposition~\ref{near}, the proper $UV^{n-1}$-retraction $r:M\to M\setminus U$ between the $\mu^n$-manifolds $M$ and $M\setminus U$ is a near homeomorphism, which implies that $M\setminus U$ is homeomorphic to $M$.
\end{proof}

Since each tame open ball is a tame open set, Proposition~\ref{p3.6} implies:

\begin{corollary}\label{tameball} For any tame open ball $U$ in a Menger manifold $M$, the complement $M\setminus U$ is a Menger manifold, homeomorphic to $M$.
\end{corollary}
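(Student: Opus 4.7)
The plan is to derive the corollary directly from Proposition~\ref{p3.6}, with essentially no new work. Given a tame open ball $U\subset M$, I would observe that the singleton family $\{U\}$ is trivially a vanishing family of tame open balls with pairwise disjoint closures in $M$: it is finite, hence locally finite (so in particular vanishing in the sense defined before Proposition~\ref{p2}), and the pairwise disjointness condition on closures is vacuous for a one-element family. Since $U=\bigcup\{U\}$, this shows that $U$ qualifies as a tame open set in $M$ under the definition preceding Proposition~\ref{p2}.

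With that observation in place, Proposition~\ref{p3.6} applies word-for-word to $U$ and yields that $M\setminus U$ is an $\mu^n$-manifold homeomorphic to $M$, where $n=\dim(M)$. This is precisely the statement of the corollary.

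There is effectively no obstacle in the argument: all of the content has already been carried out inside the proof of Proposition~\ref{p3.6}. In particular, the construction of a proper $UV^{n-1}$-retraction $r:M\to M\setminus U$ via Lemma~\ref{l3.5}, the verification of the disjoint $n$-cells property for $M\setminus U$ using the approximation properties of $\ANE[n]$-spaces, and the application of Proposition~\ref{near} to upgrade $r$ to a near homeomorphism between $\mu^n$-manifolds, all go through without change in the special case of a single tame open ball. Consequently the corollary is immediate and requires only the bookkeeping remark that a tame open ball is an instance of a tame open set, which is exactly what the sentence preceding the corollary statement asserts.
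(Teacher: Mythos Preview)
Your proposal is correct and matches the paper's own argument exactly: the paper deduces the corollary from Proposition~\ref{p3.6} via the one-line observation that every tame open ball is a tame open set, which is precisely what you spell out. No additional content is needed.
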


\begin{proposition}\label{p3.9} Let $\mathcal D$ be a vanishing decomposition of a  $\mu^n$-manifold $M$ such that each set $D\in \mathcal D$ is a compact $UV^{n-1}$-set in $M$ and a $Z$-set in $M$. Then the decomposition space $X/\mathcal D$ is a $\mu^n$-manifold and the quotient map $q:M\to M/\mathcal D$ is a near homeomorphism.
\end{proposition}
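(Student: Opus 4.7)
The plan is to apply Bestvina's characterization (Theorem~\ref{best2}) to recognize $M/\mathcal D$ as a $\mu^n$-manifold, after which Proposition~\ref{near} will immediately yield that the proper $UV^{n-1}$-surjection $q:M\to M/\mathcal D$ is a near homeomorphism. Since $\mathcal D$ is a vanishing decomposition of the regular space $M$, it is upper semicontinuous by Lemma~2.2 of \cite{BR1}, so $q$ is closed. Combined with the compactness of the members of $\mathcal D$, this makes $q$ perfect, and by Lemma~\ref{l2.1} the quotient space $M/\mathcal D$ is locally compact and metrizable. Because every fiber of $q$ is by hypothesis a $UV^{n-1}$-compactum (singleton fibers being trivially $UV^{n-1}$), the map $q$ is a perfect $UV^{n-1}$-surjection.

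Next, I would verify the three conditions of Theorem~\ref{best2} for $M/\mathcal D$. The inequality $\dim(M/\mathcal D)\ge n$ is immediate: $q$ restricts to a topological embedding of the dense set $M\setminus\bigcup\mathcal D^\circ$ into $M/\mathcal D$, and this set has dimension $n$ because the vanishing property lets one split $\mathcal D^\circ$ into countably many discrete subfamilies, exhibiting $\bigcup\mathcal D^\circ$ as a countable union of $Z$-sets, hence meager in the $\mu^n$-manifold $M$. The reverse inequality $\dim(M/\mathcal D)\le n$ follows from the standard dimension-theoretic fact that a perfect $UV^{n-1}$-surjection from a space of dimension at most $n$ has image of dimension at most $n$. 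For the $\ANE[n]$-property of $M/\mathcal D$, I would verify the equivalent $LC^{n-1}$-condition (Proposition~\ref{LCn}): for each $[D]\in M/\mathcal D$ and each saturated open neighborhood $q^{-1}(U)\supset D$, the $UV^{n-1}$-property of $D$ in $M$ combined with the $\ANE[n]$-property of $M$ provides a smaller saturated open neighborhood that, upon pushing through $q$, witnesses the $UV^{n-1}$-condition for the singleton $\{[D]\}$ in $M/\mathcal D$.

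The technical heart of the proof is the disjoint $n$-cells property for $M/\mathcal D$. Given maps $f_0,f_1:\II^n\to M/\mathcal D$ and $\e>0$, I would approximately lift $f_0,f_1$ through $q$ to maps $g_0,g_1:\II^n\to M$ (using Proposition~\ref{n-homotopy} together with the $UV^{n-1}$-property of the fibers), apply the disjoint $n$-cells property of the $\mu^n$-manifold $M$ to arrange $g_0(\II^n)\cap g_1(\II^n)=\emptyset$ at some positive distance $\delta$, and then push the lifts off the large members of $\mathcal D^\circ$. The vanishing property of $\mathcal D$ guarantees that only a discrete subfamily of $D\in\mathcal D^\circ$ have diameter exceeding, say, $\delta/5$, and for each such $D$ the fact that $D$ is a $UV^{n-1}$ $Z$-set in the $\ANE[n]$-space $M$ allows us to move $g_0,g_1$ off $D$ inside a small saturated neighborhood of $D$, in complete analogy with Claim~\ref{cl10.2} in the proof of Proposition~\ref{p3.6}. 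The resulting modified lifts compose with $q$ to $\e$-close approximations of $f_0,f_1$ whose images are disjoint in $M/\mathcal D$.

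I expect the main obstacle to be this coordinated push-off: the adjustments across the discrete subfamily of large-diameter non-degenerate fibers must be organized so as not to interfere, and the final images must lie in disjoint $\mathcal D$-saturated sets so that disjointness survives the quotient. The blueprint of Proposition~\ref{p3.6}, using collectionwise normality of $M$ and the extension result of Proposition~\ref{n-homotopy}, should carry this through. Once the three Bestvina conditions are established, Theorem~\ref{best2} identifies $M/\mathcal D$ as a $\mu^n$-manifold, and Proposition~\ref{near} then applies to the proper $UV^{n-1}$-surjection $q$ to deliver the asserted near homeomorphism.
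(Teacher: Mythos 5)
Your proposal is correct and follows the same overall strategy as the paper: verify Bestvina's conditions (Theorem~\ref{best2}) for the locally compact metrizable quotient $M/\mathcal D$, and then invoke Proposition~\ref{near} for the perfect $UV^{n-1}$-surjection $q$. The identification of $q$ as a perfect $UV^{n-1}$-surjection via upper semicontinuity and Lemma~\ref{l2.1}, and the $LC^{n-1}$/$\mathrm{ANE}[n]$ verification, match the paper's argument (which simply cites Proposition 2.1.32 of \cite{Chig} where you argue by hand). The one place you diverge is the disjoint $n$-cells property: the paper first reduces to the case of a $\sigma$-compact $M$, so that $\mathcal D^\circ$ is countable and $\bigcup\mathcal D^\circ$ is a $\sigma Z$-set; it then pushes the two disjoint approximate lifts entirely off $\bigcup\mathcal D^\circ$ in one step and concludes disjointness in the quotient from the injectivity of $q$ on the complement of $\bigcup\mathcal D^\circ$. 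You instead propose the heavier Claim~\ref{cl10.2}-style surgery, pushing the lifts off only the discretely many fibers of diameter at least $\delta/5$ and observing that the remaining small fibers cannot meet both images once they are $\delta/5$-separated. Both routes work; the paper's $\sigma Z$-set argument is shorter and avoids the coordinated push-off you flag as the main obstacle, while your version has the mild advantage of not requiring the $\sigma$-compact reduction and of making the dimension count $\dim(M/\mathcal D)=n$ explicit, a point the paper leaves tacit.
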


\begin{proof} Since each $\mu^n$-manifolds can be decomposed into a topological sum of $\sigma$-compact $\mu^n$-manifolds, we lose no generality assuming that the $\mu^n$-manifold $M$ is $\sigma$-compact.
In this case the vanishing decomposition $\mathcal D$ has at most countable non-degeneracy part $\mathcal D^\circ$, which implies that the union $\bigcup\mathcal D^\circ$ is a $\sigma Z$-set in $M$.

By Lemma 2.2 of \cite{BR1}, the vanishing decomposition $\mathcal D$ of $X$ is upper semicontinuous, which implies that quotient map $q:X\to X/\mathcal D$ is perfect and hence is a $UV^{n-1}$-surjection. By Lemma~\ref{l2.1}, the decomposition space $Y=X/\mathcal D$ is metrizable and locally compact.

The space $X$, being a $\mu^n$-manifold, is an $LC^{n-1}$-space. Since $q:X\to Y$ is a $UV^{n-1}$-surjection, $Y$ is an $LC^{n-1}$-space according to  Proposition 2.1.32 of \cite{Chig}. By Lemma~\ref{LCn}, $Y$ is an $\ANE[n]$-space. To apply Bestvina's characterization Theorem~\ref{best2}, it remain to prove that the space $Y$ has the disjoint $n$-cells property.

For this fix an open cover $\U$ of $Y$ and two maps $f,g:\II^n\to Y$.
By the paracompactness of $Y$, there exists an open cover $\V$ of $Y$
such that $\St(\V)\prec\U$. Consider the open cover $q^{-1}(\V)=\{q^{-1}(V):V\in\V\}$ of the space $X$.
By Proposition 2.1.31 \cite{Chig}, there are maps $\tilde f,\tilde g:\II^n\to X$ such that $(q\circ\tilde f,f)\prec\V$ and $(q\circ\tilde g,g)\prec\V$. Since $X$ is a $\mu^n$-manifold and $\cup\mathcal D^\circ$ is a $\sigma Z$-set in $X$, there are two maps $\tilde f',\tilde g':\II^n\to X$ such that $(\tilde f',\tilde f)\prec q^{-1}(\V)$, $(\tilde g',\tilde g)\prec q^{-1}(\V)$, $\tilde f'(\II^k)\cap\tilde g'(\II^k)=\emptyset$ and $\tilde f'(\II^k)\cup\tilde g'(\II^k)\subset X\setminus\bigcup\mathcal D^\circ$. Then the maps $f'=q\circ\tilde f':\II^n\to Y$ and $g'=q\circ\tilde g':\II^n\to Y$ have the properties: $(f',f)\prec\St(\V)\prec\U$, $(g',g)\prec\St(\V)\prec\U$ and $f'(\II^n)\cap g'(\II^n)=\emptyset$. The latter property follows from the injectivity of the restriction $q|X\setminus\bigcup\mathcal D^\circ$. This completes the proof of the disjoint $n$-cells property of the locally compact $\ANE[n]$-space $Y$. By Theorem~\ref{best2}, the space $Y$ is an $\mu^n$-manifold and by Proposition~\ref{near}, the $UV^{n-1}$-surjection $q:X\to Y$ is a near homeomorphism.
\end{proof}

\begin{proposition}\label{shrink} A vanishing decomposition $\mathcal D$ of an $\mu^n$-manifold $M$ is shrinkable if
each non-degeneracy element $D\in\mathcal D^\circ$ is a tame closed ball in $M$.
\end{proposition}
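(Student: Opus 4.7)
The plan is to verify that the quotient map $q_{\mathcal D}:M\to M/\mathcal D$ is a near homeomorphism and then invoke the Shrinkability Criterion (Theorem~\ref{t:shrink}). Upper semicontinuity of $\mathcal D$ follows from vanishing on a regular space (Lemma~2.2 of~\cite{BR1}), and $M$ is Polish, so the criterion applies.

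First I peel off the interiors of the non-degenerate elements. Writing each $D\in\mathcal D^\circ$ as $D=\bar B_D$ for a tame open ball $B_D$, set $U=\bigcup_{D\in\mathcal D^\circ}B_D$; this is a tame open set in $M$ thanks to the vanishing and disjointness inherited from $\mathcal D$. Proposition~\ref{p3.6} gives that $M\setminus U$ is a $\mu^n$-manifold and that the retraction $r:M\to M\setminus U$ constructed in its proof is a $UV^{n-1}$-surjection; a short argument using the vanishing of $\{\bar B_D\}$ and local compactness of $M$ shows that $r$ is proper, so Proposition~\ref{near} upgrades it to a near homeomorphism.

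Next I transfer $\mathcal D$ to $M\setminus U$. Let $\mathcal D'$ be the decomposition of $M\setminus U$ whose non-degenerate elements are $\{\partial B_D:D\in\mathcal D^\circ\}$. Each $\partial B_D\cong\mu^n$ is a compact $UV^{n-1}$-set, and $\mathcal D'$ inherits vanishing from $\mathcal D$. The central technical point, and the main obstacle in the proof, is showing that each $\partial B_D$ is a $Z$-set in $M\setminus U$; this is not automatic because $M\setminus U$ is only a closed (not open) subspace of $M\setminus B_D$, where the $Z$-set property of $\partial B_D$ comes directly from the definition of a tame open ball. I handle this by the following retraction trick. Given an open cover $\mathcal U$ of $M\setminus U$, lift it to a cover $\tilde{\mathcal U}$ of $M\setminus B_D$ so fine that $r$ sends each element of $\tilde{\mathcal U}$ into an element of $\mathcal U$ (possible because $r$ is continuous and fixes $M\setminus U$ pointwise). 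Use the $Z$-set property of $\partial B_D$ in $M\setminus B_D$ to produce $\tilde f:M\setminus B_D\to M\setminus\bar B_D$ with $(\tilde f,\mathrm{id})\prec\tilde{\mathcal U}$. Then $r\circ\tilde f|_{M\setminus U}$ lands in $(M\setminus U)\setminus\partial B_D$ (because $r(M\setminus\bar B_D)\subset(M\setminus U)\setminus\partial B_D$, which follows from the disjointness of the tame closed balls) and is $\mathcal U$-near to the identity of $M\setminus U$. With this in place, Proposition~\ref{p3.9} applies to $(M\setminus U,\mathcal D')$, and the quotient $q':M\setminus U\to(M\setminus U)/\mathcal D'$ is a near homeomorphism.

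To finish I combine $r$ and $q'$. The composition $q'\circ r:M\to(M\setminus U)/\mathcal D'$ is constant on each element of $\mathcal D$ (each $\bar B_D$ is collapsed to $\partial B_D$ by $r$ and then to a point by $q'$), and conversely $q_{\mathcal D}$ is constant on fibers of $q'\circ r$; by the universal property of quotients this yields a canonical homeomorphism $\phi:M/\mathcal D\to(M\setminus U)/\mathcal D'$ with $\phi\circ q_{\mathcal D}=q'\circ r$. Since near homeomorphisms compose (a standard star-refinement argument for metrizable targets), $q'\circ r$ is a near homeomorphism, and hence so is $q_{\mathcal D}=\phi^{-1}\circ(q'\circ r)$. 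The Shrinkability Criterion then delivers the shrinkability of $\mathcal D$.
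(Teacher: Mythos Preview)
Your proof is correct and follows essentially the same route as the paper's: peel off the tame open set $U$, use the retraction $r$ to show each $\partial B_D$ is a $Z$-set in $M\setminus U$, apply Proposition~\ref{p3.9} to the boundary decomposition, and conclude that $q_{\mathcal D}$ is a near homeomorphism. The only cosmetic differences are that the paper verifies $\partial B_D$ is a $Z_n$-set (testing only maps from $\II^n$) and then upgrades via \cite[4.1.13]{Chig}, and at the end it identifies $M/\mathcal D$ as a $\mu^n$-manifold and invokes Proposition~\ref{near} directly on $q_{\mathcal D}$, rather than composing the near homeomorphisms $r$ and $q'$ as you do.
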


\begin{proof} Find a disjoint family $\U$ of tame open balls in the $\mu^n$-manifold $M$ such that $\mathcal D^\circ=\{\bar B:B\in\U\}$. Then $U=\bigcup\U$ is a tame open set in $M$ and by Proposition~\ref{p3.6}, the complement $M\setminus U$ is a $\mu^n$-manifold. Moreover, by the proof of Proposition~\ref{p3.6}, there is an $UV^{n-1}$-retraction $r:M\to M\setminus U$. By the definition of a tame open ball, the boundary $\partial B$ of each tame open ball $B\in\U$ in $M$ is homeomorphic to the Menger cube $\mu^n$ and is a $Z$-set in  $M\setminus B$.

We claim that $\partial B$ is a $Z_n$-set in $M\setminus U$. Given any map $f:\II^n\to M\setminus U$ and any open cover $\W$ of $M\setminus U$, consider the open cover $r^{-1}(\W)=\{r^{-1}(W):W\in\W\}$ of the subspace $M\setminus B\subset M$. Since $\partial B$ is a $Z$-set in $M\setminus B$, there is a map $f':\II^n\to M\setminus B$ such that $f'(\II^n)\cap\partial B=\emptyset$ and $(f',f)\prec r^{-1}(\W)$. Then the map $\tilde f=r\circ f':\II^n\to M\setminus U$ has the desired properties: $(\tilde f,f)\prec\W$ and $\tilde f(\II^n)\cap \partial B=\emptyset$. The latter equality follows from the fact that $r(K)\subset\partial K$ for each $K\in \U$. So, $\partial B$ is a $Z_n$-set in $M\setminus U$.
Since $M\setminus U$ is a $\mu^n$-manifold, $\partial B$ is a $Z$-set in $M\setminus U$ according to Proposition 4.1.13 of \cite{Chig}. Now we see that $\partial\mathcal D=\{\partial B:B\in\mathcal D^\circ\}\cup\big\{\{x\}:x\in M\setminus \bigcup\mathcal D^\circ\big\}$ is a vanishing decomposition of the $\mu^n$-manifold $M\setminus U$ into $Z$-sets which are $UV^{n-1}$-sets (being homeomorphic to $\mu^n$ or singletons). By Proposition~\ref{p3.9}, the decomposition space $N=(M\setminus U)/\partial\mathcal D$ is a $\mu^n$-manifold and the quotient map $q:M\setminus U\to N=(M\setminus U)/\partial\mathcal D$ is a near homeomorphism. Now consider the perfect map $q_{\mathcal D}\circ r:M\to N$ and observe that $\mathcal D=\{(q_{\partial\mathcal D}\circ r)^{-1}(y):y\in N\}$, which implies that the decomposition space $M/\mathcal \partial \mathcal D$ is homeomorphic to the $\mu^n$-manifold $N$. Now we see that the quotient map $q_{\mathcal D}:M\to M/{\mathcal D}$, being a $UV^{n-1}$-surjection between $\mu^n$-manifolds, is a near homeomorphism.
\end{proof}

Since open subspaces of $\mu^n$-manifolds are $\mu^n$-manifolds, Proposition~\ref{shrink} has a self-generalization:

\begin{proposition}\label{strong-shrink} A vanishing decomposition $\mathcal D$ of a Menger manifold $M$ is strongly shrinkable if
each non-degeneracy element $D\in\mathcal D^\circ$ is a tame closed ball in $M$.
\end{proposition}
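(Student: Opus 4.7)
The plan is to deduce strong shrinkability by applying Proposition~\ref{shrink} to the restricted decomposition $\mathcal D|U$ on each $\mathcal D$-saturated open set $U\subset M$. Since $U$ is an open subset of a Menger manifold, $U$ is itself a $\mu^n$-manifold, and the $\mathcal D$-saturation of $U$ guarantees that every $D\in\mathcal D$ meeting $U$ is contained in $U$. Hence $\mathcal D|U$ is a decomposition of $U$ whose non-degeneracy elements are among the tame closed balls from $\mathcal D^\circ$ that happen to lie in $U$. To invoke Proposition~\ref{shrink} on the pair $(U,\mathcal D|U)$, two things must be checked: that $\mathcal D|U$ is vanishing in $U$, and that each non-degenerate element is a tame closed ball in $U$ (and not merely in $M$).

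For the vanishing property, I would exhaust $U$ by a countable family of open sets $\{U_i\}_{i\in\w}$ whose closures $\bar U_i$ in $M$ are compact subsets of $U$. Given any open cover $\mathcal V$ of $U$, the family $\mathcal V\cup\{M\setminus\bar U_i\}$ is an open cover of $M$; so by vanishing of $\mathcal D$ in $M$, only finitely many $D\in\mathcal D$ that are not contained in any $V\in\mathcal V$ can meet $\bar U_i$. This yields local finiteness at every point of $U$ of $\{D\in\mathcal D|U:\forall V\in\mathcal V,\;D\not\subset V\}$, which is the vanishing condition for $U$.

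The delicate step, and the expected main obstacle, is verifying that a tame open ball $B$ in $M$ with $\bar B\subset U$ remains a tame open ball in $U$. The conditions $\bar B\cong\mu^n$ and $\partial B\cong\mu^n$ and ``$\partial B$ is a $Z$-set in $\bar B$'' are intrinsic to $\bar B$ and immediate. The substantive point is to show that $\partial B$ is a $Z$-set in $U\setminus B$. My approach is to verify the $Z_n$-set property in $U\setminus B$ and then upgrade to $Z$-set via Proposition~4.1.13 of \cite{Chig} (as was done in the proof of Proposition~\ref{shrink}), using that $U\setminus B$ is a $\mu^n$-manifold (open in $M\setminus B$, which is a $\mu^n$-manifold by Corollary~\ref{tameball}). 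Given an open cover $\mathcal W$ of $U\setminus B$ and a map $f\colon\II^n\to U\setminus B$, fix a metric on $M$ and choose $\e>0$ so small, via a Lebesgue-number argument on the compact set $f(\II^n)$, that the $\e$-neighborhood of $f(\II^n)$ in $M$ lies inside $U\setminus B$ and so that every $\e$-ball centered in $f(\II^n)$ is contained in some $W\in\mathcal W$. Applying the $Z$-set property of $\partial B$ in $M\setminus B$ to this $\e$ yields a map $f'\colon\II^n\to (M\setminus B)\setminus\partial B$ with $d(f',f)<\e$; by the choice of $\e$ we have $f'(\II^n)\subset U\setminus\bar B$ and $(f',f)\prec\mathcal W$, confirming the $Z_n$-set property.

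With these two verifications in hand, Proposition~\ref{shrink} applied to $(U,\mathcal D|U)$ shows that $\mathcal D|U$ is shrinkable for every $\mathcal D$-saturated open $U\subset M$, which is exactly strong shrinkability of $\mathcal D$.
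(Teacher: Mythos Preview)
Your approach is exactly the paper's: the paper simply remarks that open subspaces of $\mu^n$-manifolds are $\mu^n$-manifolds, so Proposition~\ref{shrink} ``self-generalizes'' to the restricted decomposition on each $\mathcal D$-saturated open set. You have supplied the details the paper leaves implicit (vanishing of $\mathcal D|U$ and tameness of the balls relative to $U$); the only slip is that the $\e$-neighborhood of $f(\II^n)$ need only lie in $U$, not in $U\setminus B$---it may meet $B$ when $f(\II^n)$ touches $\partial B$---but since the $Z$-set push already lands in $M\setminus B$, your conclusion $f'(\II^n)\subset U\setminus\bar B$ still follows.
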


\section{Constructing tame balls in Menger manifolds}\label{s4}

In this section we shall construct tame balls in Menger manifolds. In particular, we shall prove Lemma~\ref{l4.2}, which implies Theorem~\ref{base}, announced in the Introduction.

First we recall a standard construction of the Menger cube $M^k_n$ where $k\ge 2n+1$. In the discrete cube $\{0,1,2\}^k$ consider the subset
$$T^k_n=\big\{(x_1,\dots,x_k)\in\{0,1,2\}^k:|\{i\in \{1,\dots,k\}:x_i=1\}|\le n\big\}.$$
Let $p_k:\{0,1,2\}^k\to\{0,1,2\}$, $p_k:(x_1,\dots,x_k)\mapsto x_k$, be the projection onto the last coordinate. It follows that for any $y\in\{0,2\}$ we get
\begin{equation}
p_k^{-1}(y)\cap T^k_n=T^{k-1}_n\times \{y\}.
\end{equation}

By definition, the Menger cube $M^k_n$ is the image of the countable product $(T^n_k)^{\IN}$ under the continuous map $$s:(T^n_k)^{\IN}\to[0,1]^k, \;\;s:(x_i)_{i=1}^\infty\mapsto\sum_{i=1}^\infty\frac{x_i}{3^i}.$$

Bestvina's Characterization Theorem~\ref{best1} implies that for any $n\ge 0$ and $k\ge 2n+1$ the Menger cube $M^k_n$ is homeomorphic to the Menger cube $\mu^n=M^{2n+1}_n$, see \cite[p.98]{Best}. For $n=0$ and $k=1$ the Menger cube $\mu^0=M^1_0$ coincides with the standard Cantor set on $[0,1]$.

Now we shall establish some properties of sections of the Menger cube $M_n^k$ by hyperplanes.
Consider the projection $\pr_k:[0,1]^k\to [0,1]$, $\pr_k:(x_1,\dots,x_k)\mapsto x_k$, of the cube $[0,1]^k$ onto the last coordinate. For two points $a<b$ in the unit interval $[0,1]$, consider the retraction $r_{[a,b]}:[0,1]\to [a,b]$ such that $r(x)=x$ for all $x\in[a,b]$, $r([0,a])=\{a\}$ and $r([b,1])=\{b\}$. This retraction induces the retraction $\bar r:[0,1]^k\to\pr_k^{-1}([a,b])$ defined by $\bar r:(x_1,\dots,x_k)\mapsto (x_1,\dots,x_{k-1},r(x_k))$.

\begin{lemma}\label{l4.1} If $k\ge 2n+2$ and $a<b$ are any points of the Cantor set $M^1_0\subset[0,1]$, then:
\begin{enumerate}
\item the set $M^k_n\cap\pr_k^{-1}(a)$ is equal to $M^{k-1}_n\times\{a\}$ and hence is homeomorphic to the Menger cube $\mu^n$;
\item the restriction $\bar r_{[a,b]}|M^k_n:M^k_n\to \pr_k^{-1}([a,b])$ is a retraction of $M^k_n$ onto $M^k_n\cap \pr^{-1}_k([a,b])$;
\item $M^k_n\cap \pr^{-1}_k(a)$ is a $Z$-set in $M^k_n\cap\pr_k^{-1}([a,b])$ if $a$ is a non-isolated point of $[a,b]\cap M^1_0$;
\item $M^k_n\cap \pr^{-1}_k(b)$ is a $Z$-set in $M^k_n\cap\pr_k^{-1}([a,b])$ if $b$ is non-isolated point of $[a,b]\cap M^1_0$;
\item $M^k_n\cap \pr^{-1}_k([a,b])$ is homeomorphic to the Menger cube $\mu^n$ if $a,b$ are non-isolated points in $[a,b]\cap M^1_0$.
\end{enumerate}
\end{lemma}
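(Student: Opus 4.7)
The plan is to handle the five items in turn, exploiting the fiber identity (1), namely $p_k^{-1}(y)\cap T^k_n=T^{k-1}_n\times\{y\}$ for $y\in\{0,2\}$, together with the fact that every point $a\in M^1_0$ admits a ternary expansion $(b_i)\in\{0,2\}^{\IN}$.

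For item (1), the inclusion $M^k_n\cap\pr_k^{-1}(a)\subseteq M^{k-1}_n\times\{a\}$ reduces to replacing an arbitrary representation $(z_i)\in(T^k_n)^{\IN}$ of $x$ by one whose $k$-th digit sequence $(y_i^k)$ lies in $\{0,2\}^{\IN}$. If $(z_i^k)\notin\{0,2\}^{\IN}$, the two ternary expansions of $a$ agree up to some position $j-1$ and then read either $(1,0,0,\ldots)$ versus $(0,2,2,\ldots)$, or $(1,2,2,\ldots)$ versus $(2,0,0,\ldots)$; in either case, swapping the tail from position $j$ onwards strictly decreases the number of $1$'s at position $j$ and leaves the counts at positions $i>j$ unchanged (the swap is always between a $0$ and a $2$). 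Thus the new $(y_i)$ remains in $T^k_n$, and by (1) it lies in $(T^{k-1}_n\times\{0,2\})^{\IN}$, giving $(x^1,\ldots,x^{k-1})\in M^{k-1}_n$. The reverse inclusion is obtained by concatenating any representation of a point of $M^{k-1}_n$ with the $\{0,2\}$-expansion of $a$. Since $k-1\ge2n+1$, the space $M^{k-1}_n$ is homeomorphic to $\mu^n$ by Theorem~\ref{best1}.

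For item (2) it suffices to verify $\bar r_{[a,b]}(M^k_n)\subseteq M^k_n$, since $\bar r_{[a,b]}$ is already the identity on $\pr_k^{-1}([a,b])$. For $x\in M^k_n$ with $x_k>b$, one has $\bar r_{[a,b]}(x)=(x^1,\ldots,x^{k-1},b)$; the first $k-1$ coordinates of any element of $M^k_n$ automatically lie in $M^{k-1}_n$ (forget the last coordinate in any $T^k_n$-representation), and item (1) provides $M^{k-1}_n\times\{b\}\subseteq M^k_n$. The case $x_k<a$ is symmetric. For items (3) and (4), the non-isolation of $a$ in $[a,b]\cap M^1_0$ yields $a_m\in(a,b]\cap M^1_0$ with $a_m\to a$. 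By item (2) with $[a_m,b]$ in place of $[a,b]$, the map $\bar r_{[a_m,b]}$ restricts to a self-map of $M^k_n\cap\pr_k^{-1}([a,b])$ whose image lies in $\pr_k^{-1}([a_m,b])$, hence disjoint from $\pr_k^{-1}(a)$. Since this restriction differs from the identity only in the last coordinate and by at most $a_m-a$, for any open cover $\U$ of $M^k_n\cap\pr_k^{-1}([a,b])$ one has $(\bar r_{[a_m,b]},\id)\prec\U$ for large $m$, witnessing the $Z$-set property of $M^k_n\cap\pr_k^{-1}(a)$. Item (4) is symmetric.

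For item (5) I would apply Theorem~\ref{best1}. The slab $M^k_n\cap\pr_k^{-1}([a,b])$ is compact metrizable, has dimension exactly $n$ (at most $n$ as a subset of $M^k_n$, at least $n$ because it contains the copy $M^{k-1}_n\times\{a\}$ of $\mu^n$ from~(1)), and is $\AEE[n]$ as a retract of $M^k_n\cong\mu^n$ by item (2), since retracts of $\AEE[n]$-spaces are $\AEE[n]$. For the disjoint $n$-cells property, given maps $f,g:\II^n\to M^k_n\cap\pr_k^{-1}([a,b])$ and $\e>0$, I choose $a',b'\in M^1_0$ with $a<a'<b'<b$ and $a'-a,\,b-b'<\e/2$ (possible by non-isolation of both endpoints), push $f,g$ into the smaller slab via $\bar r_{[a',b']}$, and then apply the disjoint $n$-cells property of $M^k_n\cong\mu^n$ with tolerance $\delta<\min(a'-a,b-b',\e/2)$; the resulting approximations have $k$-th coordinate within $[a'-\delta,b'+\delta]\subset[a,b]$ and disjoint images, completing the verification. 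The main obstacle is the digit bookkeeping in (1); everything else cascades from it through Bestvina's characterization and routine $Z$-set arguments.
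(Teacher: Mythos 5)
Your proof is correct and follows essentially the same route as the paper's: ternary-digit bookkeeping for (1)--(2), the retractions $\bar r_{[a',b]}$ that push the slab off the fiber $\pr_k^{-1}(a)$ for (3)--(4), and Bestvina's characterization Theorem~\ref{best1} for (5), which the paper leaves as a one-line remark and you usefully flesh out. The only available simplification is in (1): the forward inclusion needs no tail-swapping, since deleting the $k$-th coordinate of any $T^k_n$-representation already yields a $T^{k-1}_n$-representation of $(x_1,\dots,x_{k-1})$, because discarding a coordinate cannot increase the number of digits equal to $1$.
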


\begin{proof} 1. The first statement follows from the equality $T^{k-1}_n\times\{0,2\}=T^k_n\cap (\{0,1,2\}^{k-1}\times\{0,2\})$.
\smallskip

2. The second statement follows from the observation that for each sequence $(x(1),\dots,x(k))\in T^k_n$ the sequences $(x(1),\dots,x(k-1),0)$ and $(x(1),\dots,x(k-1),2)$ belong to $T^k_n$.
\smallskip

3. To prove the third statement, assume that $a$ is a non-isolated point of the set $M^1_0\cap[a,b]$.
We need to check that $M^k_n\cap \pr^{-1}_k(a)$ is a $Z$-set in $M^k_n\cap\pr_k^{-1}([a,b])$. Fix an open cover $\U$ of $M^k_n\cap\pr_k^{-1}([a,b])$ and find $a'\in (a,b]\cap M^1_0$ so close to $a$ that the retraction $f=\bar r_{[a',b']}|M^k_n\cap\pr^{-1}_k([a,b]):M^k_n\cap\pr_k^{-1}([a,b])\to M^k_n\cap\pr^{-1}_k([a',b])$ is $\U$-near to the identity map of $M^k_n\cap\pr_k^{-1}([a,b])$. Since $\bar r_{[a',b]}(M^k_n\cap\pr_k^{-1}([a,b])\cap \pr^{-1}_k(a)=\emptyset$, the retraction $f$ witnesses that $M^k_n\cap\pr^{-1}_k(a)$ is a $Z$-set in $M^k_n\cap\pr^{-1}([a,b])$.
\smallskip

4. The fourth statement can be proved by analogy with the third one.
\smallskip

5. The fifth statement can be derived from the preceding statements with help of the characterization Theorem~\ref{best1}.
\end{proof}

\begin{lemma}\label{l4.2} Each point of the Menger cube $\mu^n$ has a neighborhood base consisting of tame open balls.
\end{lemma}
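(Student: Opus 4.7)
I work in the canonical realization $\mu^n\cong M^k_n\subset[0,1]^k$ with $k=2n+2$, which by coordinate-permutation symmetry makes Lemma~\ref{l4.1} applicable in every coordinate direction. A point $x\in M^k_n$ will be called \emph{regular} if every coordinate $x_i$ is bilaterally non-isolated in the Cantor set $M^1_0$; the regular points are dense in $M^k_n$. The strong local homogeneity of $\mu^n$ reduces the construction of a neighborhood base of tame open balls to the case of regular points: small tame open balls at nearby regular points can be conjugated to $x$ by homeomorphisms of $\mu^n$ with small support, producing arbitrarily small tame open balls at $x$.

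\textbf{Construction.} For a regular $x=(x_1,\dots,x_k)$ and a neighborhood $O\ni x$, pick, for each $i\le k$, bilaterally non-isolated points $a_i,b_i\in M^1_0$ with $a_i<x_i<b_i$ and $\prod_{i=1}^k[a_i,b_i]\subset O$. Set
$$B:=M^k_n\cap\prod_{i=1}^k(a_i,b_i),\qquad \bar B:=M^k_n\cap\prod_{i=1}^k[a_i,b_i].$$
The iterated composition of the coordinate retractions of Lemma~\ref{l4.1}(2) (one per coordinate direction) restricts to a retraction $\rho:M^k_n\to\bar B$. Bilateral non-isolatedness further implies that $\bar B$ is the $M^k_n$-closure of $B$, so $\partial B=\bar B\setminus B$ equals the outer shell $\bar B\cap\bigcup_{i=1}^k\bigl(\pr_i^{-1}(a_i)\cup\pr_i^{-1}(b_i)\bigr)$.

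\textbf{Verification of the three conditions.} \emph{(i)} $\bar B\cong\mu^n$: as a compact $n$-dimensional metrizable retract of the Menger manifold $M^k_n$, $\bar B$ is $\AEE[n]$; it inherits the disjoint $n$-cells property from $M^k_n$ using the density of the open Menger-manifold subset $B\subset\bar B$; Bestvina's Theorem~\ref{best1} then gives $\bar B\cong\mu^n$. \emph{(ii)} $\partial B\cong\mu^n$: every face $F_{i,c}=\bar B\cap\pr_i^{-1}(c)$, $c\in\{a_i,b_i\}$, is homeomorphic to $\mu^n$ by Lemma~\ref{l4.1}(1) applied in coordinate $i$ together with step (i) repeated in the remaining $k-1$ coordinates. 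For $n=0$ a finite disjoint union of Cantor sets is a Cantor set, so $\partial B\cong\mu^0$. For $n\ge 1$, the outer shell $\partial B$ is compact and $n$-dimensional, inherits the disjoint $n$-cells property from $\bar B$, and is an $\ANE[n]$-space by a gluing argument applied to the cover by the $\ANE[n]$-faces $F_{i,c}$ together with their (again Menger-cube-type) pairwise intersections; Bestvina's Theorem~\ref{best1} yields $\partial B\cong\mu^n$. \emph{(iii)} $Z$-set property: Lemma~\ref{l4.1}(3)(4) applied coordinate by coordinate, via retractions $\bar r_{[a'_i,b'_i]}$ for subintervals obtained by pulling the endpoints slightly inwards or outwards, shows that each face $F_{i,c}$ is a $Z$-set both in $\bar B$ and in $M^k_n\setminus B$; finite unions of $Z$-sets are $Z$-sets.

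\textbf{Main obstacle.} For $n\ge 1$, the delicate ingredient is establishing the $\ANE[n]$-property of the outer shell $\partial B$: although each face and each face-intersection is a Menger cube, continuous $n$-dimensional extensions across the stratification must be patched through a Mayer--Vietoris-style argument before Bestvina's Theorem~\ref{best1} completes the identification $\partial B\cong\mu^n$. The reduction from regular to arbitrary points via strong local homogeneity is routine once the regular case is handled.
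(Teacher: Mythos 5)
Your approach (boxes $\prod_{i}(a_i,b_i)$ intersected with $M^k_n$) is genuinely different from the paper's, and it leaves a real gap exactly where you flag the ``main obstacle''. The assertion that the outer shell $\partial B$ of a box is homeomorphic to $\mu^n$ does not follow from Lemma~\ref{l4.1}, which only treats sections and slabs in a single coordinate direction; you would need a gluing/nerve theorem showing that a union of $2k$ Menger cubes meeting along a stratified family of lower-dimensional ``edge'' intersections (with nerve the boundary complex of the $k$-cube) is $LC^{n-1}$, $C^{n-1}$, and has the disjoint $n$-cells property. That is a substantial unproved step, not routine patching. A second, unacknowledged gap is your condition (iii): the retractions $\bar r_{[a,b]}$ of Lemma~\ref{l4.1} move \emph{every} point of $M^k_n$ whose $i$-th coordinate lies outside $[a,b]$, so applied ``coordinate by coordinate'' they are nowhere near the identity on $M^k_n\setminus B$. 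To witness that $\partial B$ is a $Z$-set in $M^k_n\setminus B$ you need small moves supported near $\partial B$ that push off all faces simultaneously near the edges and corners of the box, and such moves are not supplied by the cited retractions; Lemma~\ref{l4.1}(3)(4) only certifies the $Z$-set property of a single hyperplane section inside a single slab.

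The paper avoids corners entirely. It takes the one-coordinate slabs $U_\e=M^k_n\cap\pr_k^{-1}([0,\e))$ for $\e$ ranging over the bilaterally non-isolated points of the Cantor set: these are tame open balls directly by Lemma~\ref{l4.1} (closure, boundary, and both $Z$-set conditions all come from parts (1)--(5)). It then collapses the face $K=M^k_n\cap\pr_k^{-1}(0)$, a $Z$-set copy of $\mu^n$, to a point using Proposition~\ref{p3.9}, so that $M^k_n/K\cong\mu^n$ and the images $q(U_\e)$ form a neighborhood base of tame open balls at the collapsed point; finally it spreads this single base point over all of $\mu^n$ by topological homogeneity. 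If you want to salvage your argument, the cleanest fix is to abandon boxes and adopt this quotient trick; your reduction to a dense set of points via (strong local) homogeneity is fine and is essentially the paper's last step.
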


\begin{proof} Fix any integer $k\ge 2n+2$ and consider the Menger cube $M^k_n$, which is homeomorphic to $\mu^n$ by \cite[p.98]{Best}. By Lemma~\ref{l4.1}, the set $K=M^{k}_n\cap \pr^{-1}_k(0)$ is a $Z$-set in $M^k_n$, homeomorphic to $\mu^n$. By Proposition~\ref{p3.9}, the quotient map $q:M^k_n\to M^k_n/K$ is a near homeomorphism and hence the quotient space $M^k_n/K$ is homeomorphic to the Menger cube $\mu^n$.
Let $E$ denote the set of points $a$ of the Cantor cube $M^1_0$ such that $a$ is a non-isolated point in the sets $[0,a]\cap M^1_0$ and $[a,1]\cap M^1_0$. It is clear that $E$ is a dense subset of $M^1_0$.

By Lemma~\ref{l4.1}, for every point $\e\in E$, the set $U_\e=M^k_n\cap\pr^{-1}([0,\e))$ is a tame open ball in $M^k_n$. Using Proposition~\ref{p3.9}, it can be shown that the set $V_\e=q(U_\e)=U_\e/K$ is a tame open ball in $M^k_n/K$. It remains to observe that $\{V_\e:\e\in E\}$ is a neighborhood base of the point $\{K\}\in M^k_n/K$ consisting of tame open balls in the space $M^k_n/K$, which is homeomorphic to the Menger cube $\mu^n$. The topological homogeneity of $\mu^n$ implies that each point of $\mu^n$ has a neighborhood base consisting of tame open balls.
\end{proof}

\begin{lemma}\label{l:K-tame} The family $\mathcal K$ of tame closed balls in an $\mu^n$-manifold $M$ is tame.
\end{lemma}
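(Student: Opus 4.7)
Plan. I will verify the four clauses in Definition~\ref{d:K-tame} for the family $\mathcal K$ of tame closed balls in the $\mu^n$-manifold $M$. Topological invariance is immediate, since the definition of a tame open (and hence closed) ball uses only topological invariants. The condition that every non-empty open $U\subset M$ contains a member of $\mathcal K$ follows from Lemma~\ref{l4.2}: every point of $U$ has a neighborhood base of tame open balls, and any such ball with closure in $U$ yields a tame closed ball in $U$. Local shrinkability of each $K\in\mathcal K$ is a direct consequence of Proposition~\ref{strong-shrink} applied to the vanishing decomposition $\mathcal D_K=\{K\}\cup\{\{x\}:x\in M\setminus K\}$, whose unique non-degenerate element is the tame closed ball $K$; the remark preceding Theorem~\ref{t:shrink} identifies local shrinkability of $K$ with strong shrinkability of $\mathcal D_K$.

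The substantial part is the local shift property. Given $x\in M$ and a neighborhood $O_x$, I will use Lemma~\ref{l4.2} to pick a tame open ball $W$ with $x\in W\subset\bar W\subset O_x$ and set $U_x=W$. Let $A=\bar{B_A}$ and $B=\bar{B_B}$ be any two tame closed balls in $W$. The idea is to construct a homeomorphism $\Phi:\bar W\to\bar W$ with $\Phi(A)=B$ and $\Phi|\partial W=\id_{\partial W}$; extending $\Phi$ by the identity on $M\setminus\bar W$ then glues to a homeomorphism $h:M\to M$ with $h(A)=B$ and $h|M\setminus O_x=\id$, since $M\setminus O_x\subset M\setminus W=(M\setminus\bar W)\cup\partial W$ and $h$ fixes both $M\setminus\bar W$ and $\partial W$.

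To build $\Phi$, I will argue that $\partial A$, $\partial B$ and $\partial W$ are pairwise disjoint $Z$-sets in the Menger cube $\bar W\cong\mu^n$. For $\partial W$ this is part of the tameness of $W$. For $\partial A$: the defining clauses of a tame open ball give that $\partial A$ is a $Z$-set in $A$ and in $M\setminus B_A$; restricting the latter property to the closed subspace $\bar W\setminus B_A$ and then applying the standard gluing lemma for $Z$-sets to the closed cover $\bar W=A\cup(\bar W\setminus B_A)$ yields $\partial A$ as a $Z$-set of $\bar W$, and the same argument applies to $\partial B$. Hence $\partial A\cup\partial W$ and $\partial B\cup\partial W$ are $Z$-sets in $\bar W\cong\mu^n$; picking any homeomorphism $\varphi:\partial A\to\partial B$ (both are copies of $\mu^n$) and combining it with $\id_{\partial W}$ produces a homeomorphism between these $Z$-sets, which the $Z$-Set Unknotting Theorem~\ref{Z-unknot} extends to a homeomorphism $\Phi:\bar W\to\bar W$.

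It remains to check that $\Phi(A)=B$. The complement $\bar W\setminus\partial A$ is the disjoint union of the open subsets $B_A$ and $\bar W\setminus A$, and $\Phi$ must carry this partition onto the analogous partition $B_B\sqcup(\bar W\setminus B)$ of $\bar W\setminus\partial B$. Since $\Phi|\partial W=\id$ and $\partial W\subset\bar W\setminus A$ while $\partial W\cap B_B=\emptyset$ (because $B_B\subset W$), the part containing $\partial W$ must be sent to the part containing $\partial W$, forcing $\Phi(\bar W\setminus A)=\bar W\setminus B$ and hence $\Phi(A)=B$. The principal technical obstacle in this plan is verifying the $Z$-set property of $\partial A$ in $\bar W$, since the paper's definition provides it only in $A$ and in $M\setminus B_A$ rather than in $\bar W$ itself; this is to be handled by the restriction-and-gluing step within the $\mathrm{ANE}[n]$-compactum $\bar W$.
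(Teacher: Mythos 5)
Three of your four verifications coincide with the paper's (topological invariance; density of $\K$ via Lemma~\ref{l4.2}; local shrinkability via Proposition~\ref{strong-shrink} together with the remark preceding Theorem~\ref{t:shrink}), so the only point at issue is the local shift property, and there your argument has a genuine gap at the final step. You apply the $Z$-Set Unknotting Theorem once, inside the whole cube $\bar W\cong\mu^n$, to extend $\varphi\cup\id_{\partial W}\colon \partial A\cup\partial W\to\partial B\cup\partial W$ to some homeomorphism $\Phi\colon\bar W\to\bar W$, and then you need $\Phi(A)=B$. Nothing in the unknotting theorem controls where $\Phi$ sends the interior of $A$; it only guarantees $\Phi(\partial A\cup\partial W)=\partial B\cup\partial W$. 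Your partition argument would be valid only if $B_A$ and $\bar W\setminus A$ (and likewise $B_B$, $\bar W\setminus B$) were exactly the connected components of $\bar W\setminus\partial A$ and $\bar W\setminus\partial B$, so that $\Phi$ is forced to permute them and the constraint $\Phi|\partial W=\id$ pins down the matching. You neither state nor prove this connectedness, and for $n=0$ (Cantor-set manifolds, which the lemma must cover) it fails outright: there every subset of $\bar W\setminus\partial A$ is a union of components, and an extension of the boundary homeomorphism can scramble $B_A$ and $W\setminus A$ arbitrarily. A secondary soft spot is your claim that $\partial A$ is a $Z$-set in $\bar W$: the definition gives the $Z$-property only in $\bar A$ and in $M\setminus B_A$, and ``restricting'' a $Z$-set property to a closed subspace is not valid in general, so your restriction-and-gluing step needs a real argument (the paper is admittedly terse on this point as well).

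The paper avoids the main problem by never asking a single unknotting extension to respect the interior of $A$. It first notes, via Corollary~\ref{tameball} applied inside $\bar U_x$, that $\bar U_x\setminus A$ and $\bar U_x\setminus B$ are themselves copies of $\mu^n$; it applies unknotting \emph{there} to produce $h\colon\bar U_x\setminus A\to\bar U_x\setminus B$ with $h|\partial U_x=\id$ and $h(\partial A)=\partial B$; it then extends $h|\partial A$ to a homeomorphism $\tilde h\colon\bar A\to\bar B$ by a second application of unknotting (using that $\partial A$, $\partial B$ are $Z$-sets in $\bar A\cong\bar B\cong\mu^n$); and finally it glues $h$, $\tilde h$ and the identity on $M\setminus U_x$. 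In this two-piece construction the equality $\bar h(\bar A)=\bar B$ holds by definition rather than by a topological separation argument. Your proof can be repaired by adopting this gluing scheme; as written, the step $\Phi(A)=B$ does not follow.
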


\begin{proof} By Definition~\ref{d:K-tame} of a tame family, we need to check four conditions.
\smallskip

1. It follows from the definition of a tame closed ball that the family $\mathcal K$ is topologically invariant.
\smallskip

2. Next, we should check that each tame closed ball $B\in\K$ is locally shrinkable. By \cite[p.42]{Dav}, this is equivalent to saying that the quotient map $q_B:M\to M/B$ is a strong near homeomorphism. But this follows from Proposition~\ref{strong-shrink}.
\smallskip

3. To prove that the family $\K$ of tame closed balls has the local shift property, fix a point $x\in M$ and a neighborhood $O_x$ of $x$. By Lemma~\ref{l4.2}, the neighborhood $O_x$ contains the closure $\bar U_x$ of some tame open ball $U_x$. We claim  that for any two tame closed balls $\bar A, \bar B\subset U_x$ there is a homeomorphism $\bar h:M\to M$ such that $\bar h(\bar A)=\bar B$ and $\bar h|M\setminus O_x=\id$.
Here $A,B$ are the interiors of the tame closed balls $\bar A$ and $\bar B$ in $M$.
By Corollary~\ref{tameball}, the complements $\bar U_x\setminus A$ and $\bar U_x\setminus B$ are homeomorphic to the  Menger cube $\mu^n$. Since the sets $\partial U_x$, $\partial A$, $\partial B$ are $Z$-sets in the tame closed ball $\bar U_x$, the $Z$-Unknotting Theorem~\ref{Z-unknot} allows us to find a homeomorphism $h:\bar U_x\setminus A\to\bar U_x\setminus  B$ such that $h|\partial U_x=\id$ and $h(\partial A)=\partial B$. Since the tame closed balls $\bar A$ and $\bar B$ are homeomorphic to $\mu^n$, the $Z$-Set Unknotting Theorem~\ref{Z-unknot} guarantees that the homeomorphism $h|\partial A:\partial A\to\partial B$ extends to a homeomorphism $\tilde h:\bar A\to \bar B$. Then the homeomorphism $\bar h:M\to M$ defined by $\bar h|\bar U_x\setminus A=h$, $\bar h|\bar A=\tilde h$ and $\bar h|M\setminus U_x=\id$ has the desired properties: $\bar h(\bar A)=\bar B$ and $\bar h|M\setminus O_x=\id$. This completes the proof of the local shift property of the family $\K$.
\smallskip

4. Lemma~\ref{l4.2} implies that each non-empty open set contains a tame closed ball.
\end{proof}

\begin{proposition}\label{p4.3} Any two dense disjoint vanishing families $\A,\mathcal B$ of tame closed balls in a Menger manifold $M$ are topologically equivalent. Moreover, for any open cover $\U$ of $M$ there is a homeomorphism $\Phi:X\to X$ such that $\Phi(\A)=\mathcal B$ and $(\Phi,\id_X)\prec\W$, where $$\W=\{\St(A,\U)\cup\St(B,\U):A\in\dot\A,\;B\in\dot\BB,\;\;\St(A,\U)\cap\St(\BB,\U)\ne\emptyset\},$$
$\dot\A=\A\cup\big\{\{x\}:x\in M\setminus \textstyle{\bigcup}\A\big\}$ and $\dot{\mathcal B}={\mathcal B}\cup\big\{\{x\}:x\in M\setminus \textstyle{\bigcup}\mathcal B\big\}$.
\end{proposition}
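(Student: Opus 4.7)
The strategy is to deduce Proposition~\ref{p4.3} from Theorem~\ref{c2.6} applied with $\K$ the family of all tame closed balls in $M$. Three ingredients must be checked: the family $\K$ is tame, the Menger manifold $M$ is strongly locally homogeneous and completely metrizable, and the decompositions $\dot\A$ and $\dot\BB$ are $\K$-tame and dense.

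Tameness of $\K$ is Lemma~\ref{l:K-tame}, and complete metrizability of $M$ follows from its local compactness and metrizability. For strong local homogeneity, I would argue as follows. Given $x \in M$ and a neighborhood $O_x$, Lemma~\ref{l4.2} yields a tame open ball $U_x$ with $x \in U_x$ and $\bar U_x \subset O_x$. For any $y \in U_x$, the sets $\{x\} \cup \partial U_x$ and $\{y\} \cup \partial U_x$ are $Z$-sets in the Menger cube $\bar U_x$, and the bijection that fixes $\partial U_x$ pointwise and sends $x$ to $y$ is a homeomorphism between them. The $Z$-Set Unknotting Theorem~\ref{Z-unknot} extends this bijection to a homeomorphism of $\bar U_x$; extending by the identity outside $\bar U_x$ yields a homeomorphism of $M$ witnessing the local shift property for the family of singletons of $M$.

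Next, $\dot\A$ and $\dot\BB$ are vanishing decompositions of $M$: their non-degeneracy parts $\dot\A^\circ = \A$ and $\dot\BB^\circ = \BB$ are vanishing families of pairwise disjoint compact sets, which guarantees discreteness of the non-refinable subfamilies of the corresponding decompositions. Since these non-degeneracy parts consist of tame closed balls, Proposition~\ref{strong-shrink} shows that $\dot\A$ and $\dot\BB$ are strongly shrinkable, hence $\K$-tame in the sense of Definition~\ref{d:d-Ktame}. Density of the families $\A$ and $\BB$ in $M$ translates into density of the non-degeneracy parts in the respective decomposition spaces: for any non-empty open $V \subset M/\dot\A$ the saturated open preimage $q_{\dot\A}^{-1}(V) \subset M$ must contain some $A \in \A$ (otherwise $\bigcup\A$ would miss this non-empty open set, contradicting density), and then $A \in V$ witnesses that $\dot\A^\circ$ is dense in $M/\dot\A$.

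Applying Theorem~\ref{c2.6} to the $\K$-tame decompositions $\dot\A$ and $\dot\BB$ and the prescribed open cover $\U$ then produces a homeomorphism $\Phi: M \to M$ with $\Phi(\dot\A) = \dot\BB$ that is $\W$-near the identity, for exactly the cover $\W$ described in the statement. Because $\Phi$ preserves cardinalities of decomposition elements, $\Phi$ maps $\dot\A^\circ = \A$ onto $\dot\BB^\circ = \BB$, yielding the required topological equivalence of families. The principal obstacle I anticipate is the verification of strong local homogeneity of $M$ via $Z$-set unknotting; once this is in hand, the remainder is a routine translation into the decomposition framework of Theorem~\ref{c2.6}.
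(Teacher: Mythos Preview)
Your proposal is correct and follows essentially the same route as the paper: complete $\A,\BB$ to decompositions $\dot\A,\dot\BB$, verify they are dense $\K$-tame (via Lemma~\ref{l:K-tame} and Proposition~\ref{strong-shrink}), check strong local homogeneity of $M$, and apply Theorem~\ref{c2.6}. The only difference is that the paper obtains strong local homogeneity by citing Corollary~4.1.17 of \cite{Chig}, whereas you reprove it directly from the $Z$-Set Unknotting Theorem; your argument for this is sound, so the ``principal obstacle'' you flag is not one.
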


\begin{proof} The vanishing families $\A,\mathcal B$ can be completed to vanishing decompositions $\dot\A=\A\cup\big\{\{x\}:x\in M\setminus\bigcup\A\big\}$ and $\dot{\mathcal B}=\mathcal B\cup\big\{\{x\}:x\in M\setminus\bigcup\mathcal B\big\}$ of the Menger manifold $M$. By Corollary 4.1.17 of \cite{Chig}, the Menger manifold $M$ is strongly topologically homogeneous. By Lemma~\ref{l:K-tame}, the family $\K$ of tame closed balls in $M$ is tame. By Proposition~\ref{strong-shrink}, the vanishing decompositions $\dot\A$ and $\dot{\mathcal B}$ are strongly shrinkable and hence are $\K$-tame according to Definition~\ref{d:d-Ktame}. Now Proposition~\ref{p4.3} follows from Theorem~\ref{c2.6}.
\end{proof}

\section{Proof of Theorem~\ref{t3}}\label{s5}

Given two dense tame $G_\delta$-sets $G,G'$ in a Menger manifold $M$, and an open cover $\U$ of $M$ we need to find a homeomorphism $h:(M,G)\to(M,G')$, which is $\U$-near to the identity homeomorphism of $M$.

Depending on the dimension of the Menger manifold $M$, two cases are possible.

If $\dim(M)=0$, then $M$ is a manifold modeled on the Cantor cube $\mu^0$. It follows from the definition of a tame $G_\delta$-set that the complements $M\setminus G$ and $M\setminus G'$ are  everywhere uncountable in $M$ in the sense that they have uncountable intersections with each non-empty open subset of $M$. This fact can be used to show that $M\setminus G=\bigcup_{i\in\w}Z_i$ and for an increasing sequence $(Z_i)_{i\in\w}$ of compact subsets without isolated points in $M$ such that each
set $Z_i$ is nowhere dense in $Z_{i+1}$. By analogy we can represent $M\setminus G'=\bigcup_{i\in\w}Z_i'$. Now the standard technique of skeletoids \cite[\S IV,V]{BP} \cite[\S2.2.2]{Chig} allows us to construct a homeomorphism $h:M\to M$ such that $(h,\id)\prec\U$ and $h(M\setminus G)=M\setminus G'$.

If $\dim(M)>0$, then the homeomorphism $h(M,G)\to (M,G')$ can be constructed by analogy with the proof of Theorem 3 in \cite{BR2}.

\end{document}